\begin{document}

\setcounter{tocdepth}{1}

\renewcommand{\qedsymbol}{$\square$}


\newtheorem*{equid}{Theorem}
\newtheorem*{conj}{Conjecture}
\newtheorem{letthm}{Theorem}
\renewcommand*{\theletthm}{\Alph{letthm}}
\newtheorem{thm}{Theorem}[section]
\newtheorem{lem}[thm]{Lemma}
\newtheorem{cor}[thm]{Corollary}
\newtheorem{prop}[thm]{Proposition}
\theoremstyle{definition}
\newtheorem{rem}[thm]{Remark}
\newtheorem*{War}{Warning}
\newtheorem{Def}[thm]{Definition}
\newtheorem{Not}[thm]{Notation}
\newtheorem{Ass}[thm]{Assumption}
\newtheorem{ex}[thm]{Example}
\newtheorem*{exs}{Examples}
\newtheorem{obs}[thm]{Observation}
\newtheorem*{Ack}{Acknowledgements}



\title[Equidistribution of preimages for maps of good reduction]{Equidistribution of preimages over nonarchimedean fields for maps of good reduction}
\author[W. Gignac]{William Gignac}
\address{Department of Mathematics, University of Michigan, 530 Church St., Ann Arbor, MI 48109, USA}
\email{wgignac@umich.edu}
\date{\today}
\maketitle

\begin{abstract} In this article we prove an analogue of the equidistribution of preimages theorem from complex dynamics for maps of good reduction over nonarchimedean fields. While in general our result is only a partial analogue of the complex equidistribution theorem, for most maps of good reduction it is a complete analogue. In the particular case when the nonarchimedean field in question is equipped with the trivial absolute value, we are able to supply a strengthening of the theorem, namely that the preimages of any \emph{tame} valuation equidistribute to a canonical measure.
\end{abstract}


\section{Introduction} 

Ergodic methods play a central role in studying the dynamics of morphisms $f\colon X\to X$ of complex projective varieties. At the heart of these methods are equidistribution results, which allow one to construct dynamically interesting $f$-invariant probability measures on $X$. The most important of these  is the \emph{equidistribution of preimages} theorem, which will be the focus of the present article.

\begin{equid}[Equidistribution of preimages over $\C$] Let $X$ be an irreducible complex projective variety, and let $f\colon X\to X$ be a polarized dynamical system of degree $d\geq 2$. Then there is an $f$-invariant probability measure $\mu_f$ on $X$ and a proper Zariski closed subset $\ms{E}_f\subset X$ such that the iterated preimages of any $x\in X\smallsetminus \ms{E}_f$ equidistribute to $\mu_f$.
\end{equid}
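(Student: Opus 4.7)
The plan is to follow the pluripotential-theoretic approach developed by Dinh and Sibony. Since $f$ is polarized of degree $d\geq 2$, I fix an ample line bundle $L$ on $X$ with $f^{*}L \cong L^{\otimes d}$ and a smooth positively curved Hermitian metric on $L$ whose curvature is a K\"ahler form $\omega$ representing $c_1(L)$. The first step is to construct a canonical positive closed $(1,1)$-current $T_f$ as the limit of the normalized pullbacks $\omega_n := d^{-n}(f^n)^{*}\omega$: writing $d^{-1}f^{*}\omega - \omega = dd^c u$ for some smooth $u$ on $X$ yields the telescoping identity $\omega_{n+1} - \omega_n = d^{-(n+1)}\, dd^c(u \circ f^n)$, so boundedness of $u$ together with geometric decay in $d$ forces uniform convergence of the potentials. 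Setting $T_f := \lim \omega_n$ and $\mu_f := T_f^{\dim X}$ via the Bedford--Taylor--Demailly Monge--Amp\`ere operator (well defined because the potential of $T_f$ is continuous), the identity $f^{*}T_f = d\, T_f$ propagates to give $f^{*}\mu_f = d^{\dim X}\mu_f$, so $\mu_f$ is an $f$-invariant probability measure on $X$.

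For the equidistribution statement, given $x \in X$ the normalized preimage $\nu_n(x) := d^{-n\dim X}(f^n)^{*}\delta_x$ is already a probability measure, and the goal is to compare $\nu_n(x)$ with $\mu_f$ by realizing both as Monge--Amp\`ere masses. Concretely, after approximating $\delta_x$ by smooth probability measures $\mu_\varepsilon$ concentrated near $x$ and exploiting continuity of Monge--Amp\`ere along such approximations, the pullback $d^{-n\dim X}(f^n)^{*}\mu_\varepsilon$ can be written in the form $(\omega_n + dd^c g_{n,\varepsilon})^{\dim X}$. The difference with $\mu_f = (\omega_n + dd^c(G_f - d^{-n}G_f \circ f^n))^{\dim X}$, where $G_f$ is the continuous potential of $T_f - \omega$, reduces after pairing with a smooth test form and repeated integration by parts to terms carrying an explicit factor $d^{-n}$ multiplied by integrals of $g_{n,\varepsilon}$ or $G_f \circ f^n$ against bounded currents. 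Provided $g_{n,\varepsilon}$ stays uniformly controlled in $L^1$, the geometric decay then delivers the weak convergence $\nu_n(x) \to \mu_f$.

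The main obstacle is identifying the exceptional set $\ms{E}_f$ and proving it is a \emph{proper Zariski closed} subset of $X$. The natural candidate is the union of all proper totally invariant analytic subsets $V \subsetneq X$, i.e.\ those with $f^{-1}(V) \subseteq V$. The deep content is (i) showing that this union is in fact a finite union, hence Zariski closed, which I would handle via a Noetherian argument combined with a dimension bound on such $V$ coming from the restricted polarization $L|_V$ together with the dynamical degrees of $f|_V$, and (ii) showing that for $x \notin \ms{E}_f$ no mass of $\nu_n(x)$ escapes in the limit to a proper analytic set. Step (ii) is the most delicate, and is where Briend--Duval style estimates on local multiplicities of $f^n$ enter: any subsequential limit of $\nu_n(x)$ that charged a pluripolar set would, after applying the operator $d^{-\dim X} f^{*}$ and using invariance of $\mu_f$, concentrate $x$ itself on a totally invariant proper subvariety, contradicting $x \notin \ms{E}_f$.
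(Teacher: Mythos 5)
The paper does not prove this theorem: it is stated in the introduction as background and attributed to Brolin, Ljubich, Freire--Lopes--Ma\~n\'e, Forn\ae{}ss--Sibony, Briend--Duval, and (in the stated generality) Dinh--Sibony. What the paper actually establishes is a nonarchimedean analogue, and in \S5 a purely algebro-geometric analogue for varieties over an arbitrary algebraically closed field, proved by an entirely different technique: Borel measures in the Zariski topology, multiplicity functions, and Dinh's reverse asymptotic multiplicity $v_-$, rather than pluripotential theory. So your proposal cannot be measured against a proof in this paper; it must be assessed on its own, as a sketch of the Dinh--Sibony argument.

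Viewed that way, the first paragraph is fine: the convergence of $d^{-n}(f^n)^*\omega$ to a positive closed $(1,1)$-current $T_f$ with continuous potential is standard, as is the definition of $\mu_f = T_f^{\dim X}$ and its invariance. The trouble starts in the second paragraph. Writing $d^{-n\dim X}(f^n)^*\mu_\varepsilon$ as a Monge--Amp\`ere measure $(\omega_n + dd^c g_{n,\varepsilon})^{\dim X}$ requires solving a Calabi--Yau type equation for each $n,\varepsilon$, and the entire argument hinges on ``provided $g_{n,\varepsilon}$ stays uniformly controlled in $L^1$.'' That uniform control is precisely the hard point: the Monge--Amp\`ere operator is not continuous under weak convergence of measures, and obtaining a priori bounds that survive both $n\to\infty$ and $\varepsilon\to 0$ is where Dinh and Sibony had to develop super-potentials and the so-called $dd^c$-method. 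As written, the step assumes what must be proven.

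The third paragraph also has a real gap. A Noetherian argument shows that any \emph{decreasing} chain of Zariski closed sets stabilizes, but it does not prevent infinitely many distinct totally invariant proper subvarieties of the same dimension from existing; their union could fail to be closed. The finiteness is a genuine theorem. The complex proof (Dinh, and Briend--Duval in the $\pr^r$ case) uses exactly the kind of multiplicity estimate that this paper's \hyperref[thm:totinv]{Theorem~\ref*{thm:totinv}} and \hyperref[cor:finiteness]{Corollary~\ref*{cor:finiteness}} encode: the reverse asymptotic multiplicity $v_-$ is upper semicontinuous and $v_-(E)\le d^{\mathrm{codim}\,E}$ with equality iff $E$ is totally invariant, so the totally invariant subvarieties of codimension $q$ are components of the closed set $\{v_-\ge d^q\}$ and are therefore finite in number. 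Some quantitative argument of this type is unavoidable. Finally, step (ii) (``no mass escapes to a proper analytic set'') is stated only as an intention; the actual Briend--Duval argument is a delicate counting estimate on local multiplicities along preimage trees, not a one-line invariance argument. In short: the high-level architecture you describe is the right one, but at each of the three places where the theorem is actually hard --- uniform $L^1$ control of the potentials, finiteness of the totally invariant locus, and the escape-of-mass estimate --- your sketch names the difficulty rather than resolving it.
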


Polarized dynamical systems are, roughly speaking, those which arise from endomorphisms of projective space. More precisely, $f$ is polarized if it is obtained by restricting a morphism $F\colon \pr^r_\C\to \pr^r_\C$ of degree $d$ to an invariant subvariety $X\subseteq \pr^r_\C$.  See \S4 for further discussion.

Brolin was the first to observe this phenomenon; he proved the theorem in the case where $X = \pr^1_\C$ and $f$ is a polynomial, using potential theoretic methods \cite{MR0194595}. Two decades later the result was extended independently by Ljubich \cite{MR741393} and Freire-Lopes-Ma\~{n}\'{e} \cite{MR736568} to rational maps on $\pr^1_\C$. Following earlier work by Fornaess-Sibony \cite{MR1369137}, the theorem was proved for endomorphisms of $\pr^r_\C$ by Briend-Duval \cite{MR1863737} and Dinh-Sibony \cite{MR1992375}. In the above generality, equidistribution was proved more recently by Dinh-Sibony \cite{MR2468484}. Along the way, similar results regarding the equidistribution of pullbacks of positive closed $(1,1)$-currents have been shown, see for instance \cite{MR1760844}, \cite{MR2017143}, \cite{MR2032940}, \cite{MR2468484}, and \cite{Rodrigo}.

The equidistribution theorem does not carry over in any obvious way to endomorphisms  of varieties over nonarchimedean fields. Because of their topological flaws, many of the analytic techniques used in the complex setting are not available over these fields. In particular, the notion of Radon measure does not make sense over nonarchimedean fields, so one cannot speak of weak convergence of measures, making equidistribution problems ill-posed.

To overcome these topological obstacles, one must eschew working on projective varieties  $X$ over a nonarchimedean field $K$, and instead work over their Berkovich analytification $X^\an$. The spaces $X^\an$ are compact Hausdorff and naturally contain $X$ as a subspace. Moreover, any endomorphism $f$ of $X$ extends to an endomorphism $f^\an$ of $X^\an$, allowing one to study the dynamics of $f$ by working in $X^\an$. Because $X^\an$ is compact Hausdorff, one has the notion of Radon measure, making it possible to study equidistribution problems. 

A nonarchimedean analogue of the equidistribution theorem has recently been proved for rational maps $f\colon \pr^{1,\an}_K\to \pr^{1,\an}_K$ by Favre and Rivera-Letelier \cite{MR2578470}, see also  \cite[\S5]{Jonsson}.  A quantitative strengthening of this theorem has recently been obtained by Okuyama \cite{Okuyama}, who has also studied nonarchimedean equidistribution of repelling points \cite{MR2885787}. Their methods are potential theoretic in nature, and, at least for the moment, do not extend to endomorphisms of $\pr^{r,\an}_K$ for $r>1$. In a separate result, Chambert-Loir has constructed an $f$-invariant probability measure $\mu_f$ on $X^\an$ for polarized dynamical systems $f\colon X^\an\to X^\an$ of a projective variety $X$ over $K$ \cite{MR2244803}, and Yuan has used this construction to prove an equidistribution result for points of small height for morphisms defined over number fields \cite{MR2425137}, see also the related results in  \cite{MR1427622}, \cite{MR2244226}, \cite{MR2221116}, \cite{MR2244803}, \cite{MR2457191}, and \cite{MR2506588}. It is not clear from these works, however, that one has equidistribution of preimages to $\mu_f$ for such a map. Nonetheless, we conjecture the following.

\begin{conj}\label{conjecture} Let $K$ be an algebraically closed complete nonarchimedean field, allowing the possibility of a trivial absolute value, and let $X$ be an irreducible projective variety over $K$. Suppose $f\colon X\to X$ is a flat polarized dynamical system of degree $d\geq 2$. Let $x\in X^\an$ be any point, and let $Y\subseteq X$ be the smallest totally invariant Zariski closed set such that $x\in Y^\an\subseteq X^\an$. Let $Y_0\subseteq Y$ be the unique component of $Y$ such that $x\in Y_0^\an$, and let $m\geq 1$ be an integer such that $f^{-m}(Y_0) = Y_0$. Then the iterated $f^m$-preimages of $x$ equidistribute the Chambert-Loir measure associated to the dynamical system $f^m\colon Y_0^\an\to Y_0^\an$.
\end{conj}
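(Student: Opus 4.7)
The plan is to adapt the pluripotential-theoretic proof of the complex equidistribution theorem due to Dinh--Sibony to the Berkovich setting, using the formalism of semipositive metrics and Monge--Amp\`ere measures developed by Chambert-Loir and by Boucksom--Favre--Jonsson. As a preliminary reduction, I would restrict $f^m$ to $Y_0$ and check that the resulting system is again a flat polarized dynamical system of degree $d^{m\dim Y_0}$, with polarization obtained by restricting an ambient ample $L$. The minimality of $Y$ combined with the orbit structure of the irreducible components of $Y$ under $f$ translates into the statement that no proper $f^m$-totally invariant Zariski closed subset of the (relabeled) $X := Y_0$ contains $x$ in its analytification: indeed, if $Z \subsetneq Y_0$ were such a subvariety, then its $f$-totally invariant closure in $X$ would be contained in $Y$, contain $x$, and (using the cyclic structure of components) intersect $Y_0$ in $Z$, contradicting minimality after a comparison with $Y \cap Y_0 = Y_0$. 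Henceforth set $k = \dim X$, fix $L$ with $f^\ast L \cong L^{\otimes d}$, equip $L$ with the canonical continuous semipositive metric $\|\cdot\|_f$ produced by Tate's telescoping trick, and let $\mu_f = c_1(L,\|\cdot\|_f)^{k}$ be the Chambert-Loir measure, an $f$-invariant Radon probability measure on $X^{\an}$ satisfying $d^{-k}f^\ast\mu_f = \mu_f$. The goal is then weak convergence $\delta_n := d^{-nk}(f^n)^\ast\delta_x \to \mu_f$.

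The heart of the argument would represent the defect $\delta_n - \mu_f$ as a $dd^c$-exact correction with a controllable potential. Using nonarchimedean Monge--Amp\`ere theory, I would construct a $\|\cdot\|_f$-psh function $u_0$ on $X^{\an}$, with logarithmic-type singularity at $x$, solving $c_1(L,\|\cdot\|_f e^{-u_0})^{k} = \delta_x$, so that $\delta_x - \mu_f$ is a mixed Monge--Amp\`ere current in $u_0$ against powers of $c_1(L,\|\cdot\|_f)$. Iterating the functional equation $f^\ast\mu_f = d^k\mu_f$ at the level of potentials then expresses $\delta_n - \mu_f$ in terms of $u_n := d^{-n}(f^n)^\ast u_0$. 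Weak convergence $\delta_n \to \mu_f$ would follow from $u_n \to 0$ in $L^1(\mu_f)$, which in turn should come from a Dirichlet-energy estimate: pullback $(f^n)^\ast$ multiplies the natural pairing by $d^{n(k-1)}$, while the normalization by $d^{-n}$ contracts it by $d^n$, yielding net $d^{-n}$ decay of the energy of $u_n$ up to a pluriharmonic limit. The non-exceptionality established in the reduction step is precisely what forces this pluriharmonic limit to vanish; a nonzero defect would detect a proper totally invariant subvariety containing $x$.

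The principal obstacle is nonarchimedean pluripotential theory itself, which is not yet mature enough to execute this program in the full generality of the conjecture, and this is exactly why the present paper settles for the good reduction case. Solving $c_1(L,\|\cdot\|_f e^{-u_0})^{k} = \delta_x$ amounts to a nonarchimedean Calabi--Yau theorem at a Dirac target; it is currently available in dimension one (Favre--Rivera-Letelier, Thuillier), over discretely valued fields of residue characteristic zero, in the good reduction case (Boucksom--Favre--Jonsson, Gubler--K\"unnemann), and in some trivially valued settings, but a uniform treatment valid for arbitrary algebraically closed complete $K$ without good reduction remains out of reach. Equally delicate is the continuity and contraction of $f^\ast$ on the relevant energy class when the Berkovich skeleton is highly non-tame. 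In the good reduction setting, by contrast, the skeleton collapses to a single Gauss-type point, the psh theory simplifies dramatically, and both the construction of $u_0$ and the contraction estimate become tractable, which is the regime in which the main theorem of the present paper operates.
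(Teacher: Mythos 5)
First, a point of context: the statement you are addressing is stated in the paper as a \emph{conjecture}, and the paper offers no proof of it; it only establishes special cases (good reduction with empty exceptional set, tame/divisorial points over trivially valued fields, Cesaro means) by an entirely different and more elementary route, namely approximating the dynamics by the reduced map over the residue field, proving a scheme-theoretic equidistribution statement there with multiplicity and upper-semicontinuity arguments, and lifting it through the reduction map. So there is no paper proof for your proposal to match, and your text, as you yourself concede in the final paragraph, is a research program rather than a proof: it is contingent on tools (a nonarchimedean Calabi--Yau theorem with Dirac target over arbitrary algebraically closed complete $K$, continuity and contraction of $f^\ast$ on a suitable energy class) that you acknowledge do not exist in the required generality. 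A proposal that openly defers its two central steps to unavailable theory cannot be counted as a proof of the conjecture.

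Beyond the admitted gaps, two steps would fail even granting substantial progress in nonarchimedean pluripotential theory. (i) Solving $c_1(L,\|\cdot\|_f e^{-u_0})^{k}=\delta_x$ is not merely ``not yet available'': for a classical (rigid) point $x$ the Monge--Amp\`ere measure of a continuous or bounded semipositive metric does not charge $x$, so any solution must have an unbounded, singular potential, and no theory of mixed Monge--Amp\`ere operators or energy pairings for such singular metrics exists in this setting; note that even the complex proofs of equidistribution of \emph{point} preimages (Briend--Duval, Dinh--Sibony) do not proceed by solving $\mathrm{MA}=\delta_x$, but rather use volume/size estimates of preimages together with a careful analysis of the exceptional set. (ii) Your energy-decay argument (``pullback multiplies the pairing by $d^{n(k-1)}$, the normalization contracts by $d^n$'') is the standard mechanism for equidistribution of pullbacks of objects of bidegree $(1,1)$ with bounded potentials; a Dirac mass is a bidegree $(k,k)$ object with maximally singular potential, and the passage from $(1,1)$-convergence to convergence of the full Monge--Amp\`ere masses is exactly the hard point, which you dispose of with the unsubstantiated assertion that ``non-exceptionality forces the pluriharmonic limit to vanish.'' In the complex setting this step is where the exceptional set $\ms{E}_f$ is actually constructed and controlled; nothing in your sketch produces it. (A small additional slip: restricting to $Y_0$, the polarized system $f^m\colon(Y_0,L|_{Y_0})\to(Y_0,L|_{Y_0})$ has algebraic degree $d^m$ and topological degree $d^{m\dim Y_0}$; your phrase ``degree $d^{m\dim Y_0}$'' conflates the two, though your later normalization $d^{-nk}(f^n)^\ast\delta_x$ is the right one.) By contrast, the paper deliberately avoids all of this machinery in the cases it can handle, which is why its results are restricted to good reduction.
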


Though we will use it several times in this article, it is possible that the flatness assumption in the conjecture is unnecessary. It should be noted that any polarized endomorphism of a smooth variety is automatically flat. In general, the interaction between flatness and dynamics is not entirely understood.  This is the subject of the recent work  \cite{MZMS}, where it is proved that $f$ is flat at a superattracting periodic point $x\in X$ if and only if $X$ is smooth that $x$.

The goal of this article is to study the validity of this conjecture in the case where $f$ is a map of good reduction. Our main result is the following, which we state, for simplicity, only in the case of endomorphisms of projective space.

\begin{letthm}\label{thmA} Let $K$ be an algebraically closed complete nonarchimedean field, possibly with trivial absolute value, and let $k$ be the residue field of $K$. Let $f\colon \pr^r_K\to \pr^r_K$ be a  morphism of degree $d\geq 2$ with good reduction, and let $\tilde{f}\colon \pr^r_k\to \pr^r_k$ be the reduction of $f$. Suppose that the characteristic of $k$ does not divide $d$.
Then \begin{enumerate}
\item[$1.$] there is a maximal proper Zariski closed subset $\ms{E}\subset\pr^r_k$ such that $\tilde{f}^{-1}(\ms{E}) = \ms{E}$.
\item[$2.$] for every $x\in \pr^{r,\an}_K$ whose reduction does not lie in $\ms{E}$, the iterated preimages of $x$ equidistribute to the Dirac probability measure supported at the Gauss point of $\pr^{r,\an}_K$.
\end{enumerate} In particular, if $\ms{E} = \varnothing$, the \hyperref[conjecture]{conjecture} holds for $f$.
\end{letthm}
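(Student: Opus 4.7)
The plan is to transport the equidistribution problem on $\mathbb{P}^{r,\mathrm{an}}_K$ to an algebraic question on the special fiber $\mathbb{P}^r_k$ via the reduction map, and then to settle that question by reducing to the known equidistribution of preimages on $\mathbb{P}^r_\mathbb{C}$.

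For statement (1), I would take $\mathcal{E}$ to be the union of all proper totally invariant Zariski closed subsets of $\mathbb{P}^r_k$. Since $\tilde{f}$ is polarized of degree $d\ge 2$, the identity $\tilde{f}^{*}H=dH$ gives $\tilde{f}^{*}[Y]=d^{\mathrm{codim}(Y)}[Y]$ for every irreducible subvariety $Y$, and combined with standard finiteness results for polarized endomorphisms this forces there to be only finitely many irreducible totally invariant proper $Y\subset \mathbb{P}^r_k$; their union is then Zariski closed, proper, and yields the required maximal $\mathcal{E}$.

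For statement (2), good reduction provides an extension of $f$ to a finite flat endomorphism $F$ of $\mathbb{P}^r_{\mathcal{O}_K}$ intertwining $f^{\mathrm{an}}$ and $\tilde{f}$ under the reduction map $\pi\colon \mathbb{P}^{r,\mathrm{an}}_K\to \mathbb{P}^r_k$, with $G$ the unique $\pi$-preimage of the generic point. Flatness of $F^n$ yields the multiplicity identity
\[
\sum_{y\in f^{-n}(x),\ \pi(y)=\tilde{y}_0} e_{f^n}(y)=e_{\tilde{f}^n}(\tilde{y}_0)
\]
for every closed $\tilde{y}_0\in \tilde{f}^{-n}(\tilde{x}_0)$, so that $\tilde{\mu}_n:=\pi_*\mu_n$ is the natural algebraic preimage measure of $\tilde{x}_0$ on $\mathbb{P}^r_k$. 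Testing weak convergence of $\mu_n$ against continuous functions of the form $|g|$ for $g$ a homogeneous polynomial normalized so that $|g|(G)=1$, and observing that $|g(y)|<|g|(G)$ precisely when the reduction of $y$ lies in the vanishing locus of $\tilde{g}$, then reduces the statement $\mu_n\to\delta_G$ to the algebraic assertion that $\tilde{\mu}_n(Z)\to 0$ for every proper Zariski closed $Z\subseteq \mathbb{P}^r_k$.

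The bulk of the work is then to prove this algebraic equidistribution for $\tilde{x}_0\notin \mathcal{E}$. In residue characteristic zero I would base-change via the Lefschetz principle to $\mathbb{C}$ and invoke the Dinh--Sibony equidistribution theorem, using that the Green current $\mu_{\tilde{f}}$ assigns no mass to any proper Zariski closed subset. In positive characteristic $p$ with $p\nmid d$, the hypothesis makes $\tilde{f}$ separable and permits a lift to a polarized endomorphism $\hat{f}$ over the Witt vectors $W(k)$ whose generic fiber falls under the characteristic-zero case; separability then ensures that preimage multiplicities and the exceptional set behave compatibly with specialization, so the characteristic-zero result descends to $\tilde{f}$. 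The main obstacle is precisely this last lifting step---verifying that the multiplicities $e_{\tilde{f}^n}(\tilde{y}_0)$ and the exceptional set of $\hat{f}$ specialize correctly to $\tilde{f}$---and it is here that the hypothesis $p\nmid d$ enters essentially, ruling out wild-ramification effects that would otherwise disrupt the combinatorics of preimages.
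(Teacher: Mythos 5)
The paper does not prove Theorem~A by base change to $\C$; it proves the classical equidistribution statement directly over the residue field $k$ for arbitrary characteristic. Specifically, the paper develops a self-contained measure theory on schemes (\S3), introduces the generic multiplicity $v_f$ and the reverse asymptotic multiplicity $v_-$, proves the detection theorem $v_-(E)\leq d^{\mathrm{codim}(E)}$ with equality iff $E$ is totally invariant (Theorem~\ref{thm:totinv}), and then runs a purely combinatorial argument (\hyperref[lem:good_preimage]{Lemma~\ref*{lem:good_preimage}} inside \hyperref[thm:Zariski_equid]{Theorem~\ref*{thm:Zariski_equid}}) controlling how much mass can pile up on proper closed subsets using the degree bound of \hyperref[prop:bounddeg]{Proposition~\ref*{prop:bounddeg}}. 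The lift to Berkovich space is then done via the reduction semiconjugacy (\S8). Your outline for the lift (testing against functions of the form $|g|$, reducing to $\tilde\mu_n(Z)\to 0$) is in the same spirit as the paper's \hyperref[lem:approx]{Lemma~\ref*{lem:approx}} / \hyperref[prop:limit_measures]{Proposition~\ref*{prop:limit_measures}} / Stone--Weierstrass argument and is fine.

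The genuine gap is in how you propose to prove the algebraic equidistribution on $\pr^r_k$. In characteristic zero, the Lefschetz/Dinh--Sibony route is plausible but already needs care because the relevant points $\tilde x_0$ (reductions of Berkovich points) are scheme-theoretic, typically non-closed, so you would have to translate the complex theorem, which concerns closed points of $\pr^r(\C)$, into a statement about weak convergence of measures on $\pr^r_k$ supported at non-closed points; this is exactly the content the paper supplies via its scheme-measure formalism. In positive characteristic the proposal does not work as stated. You assert that one can lift $\tilde f$ to a polarized endomorphism $\hat f$ over $W(k)$ and ``descend'' the generic-fiber equidistribution, with separability ensuring that multiplicities and the exceptional set ``specialize correctly.'' But this specialization step is the whole problem and is left completely open. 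Concretely: there is no reason why the exceptional set of the generic fiber $\hat f_\eta$ should specialize onto the exceptional set of $\tilde f$ (the special fiber's exceptional set can strictly contain the specialization of the generic one); distinct preimages of a lifted point $\hat x_0$ can specialize to the same point of $\pr^r_k$, concentrating mass and breaking the estimate $\tilde\mu_n(Z)\to 0$; and tracking multiplicities through specialization at non-closed points is not a formality. You acknowledge this is ``the main obstacle'' but offer no mechanism to overcome it, whereas the paper's direct approach over $k$ avoids the issue entirely. Note also that the hypothesis $p\nmid d$ is needed already for statement~(1): without it there may be no maximal proper totally invariant closed set at all (the Frobenius $z\mapsto z^p$ on $\pr^1$ has every point totally invariant), so the ``standard finiteness results'' you invoke in part~(1) are not standard over arbitrary $k$ and themselves require the characteristic hypothesis, which your sketch does not bring into the finiteness argument.
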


The case when $K$ is equipped with the trivial absolute value deserves special note here, for in this case all morphisms $f\colon \pr^r_K\to \pr^r_K$ have good reduction. Moreover, we will prove (see \hyperref[genericity]{Theorem~\ref*{genericity}}\color{black}) that generic morphisms $f$ satisfy the condition $\ms{E} = \varnothing$. Thus \hyperref[thmA]{Theorem~\ref*{thmA}} gives the full equidistribution theorem for most endomorphisms $f$ of $\pr^r_K$ when $K$ is trivially valued. However, in the case when $\ms{E} \neq\varnothing$, \hyperref[thmA]{Theorem~\ref*{thmA}} is strictly weaker than the conjecture, as there will be many points $x\in \pr^{r,\an}_K$ whose reduction lies in $\ms{E}$, but whose preimages still equidistribute to the Dirac probability measure at the Gauss point. Our next main theorem illustrates this.

\begin{letthm}\label{thmB} Suppose $K$ is a trivially valued algebraically closed field and $f\colon \pr^r_K\to \pr^r_K$ is a morphism of degree $d\geq 2$. Assume $\mathrm{char}(K)\nmid d$. Then the preimages of any divisorial point $x\in \pr^{r,\an}_K$ equidistribute to the Dirac mass at the Gauss point of $\pr^{r,\an}_K$.
\end{letthm}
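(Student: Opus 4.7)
The plan is to pursue a potential-theoretic approach: express the signed measure $\delta_x-\delta_{x_G}$ as the Monge--Amp\`ere of a bounded model function, pull this potential back by iterates of $f$, and exploit the good-reduction hypothesis to control the normalized pullback uniformly in sup-norm.

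First, I will use the canonical $\mathbb{R}_{>0}$-scaling action on $\pr^{r,\an}_K$ available in the trivially valued setting, which sends a divisorial point with valuation $v$ to the point with valuation $sv$ and recovers the Gauss point $x_G$ at $s=0$. Since $f_*$ is $\mathbb{R}$-linear on valuations, this scaling commutes with $f^\an$. Because $f$ has good reduction (which is automatic when $K$ is trivially valued), $x_G$ is a totally invariant fixed point of $f^\an$ of maximal local degree, so $(f^n)^*\delta_{x_G} = d^{rn}\delta_{x_G}$ for every $n\geq 1$. Writing $\mu_n := d^{-rn}(f^n)^*\delta_x$ for the $n$-th preimage probability measure, the goal becomes to show $\mu_n\to\delta_{x_G}$ weakly.

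Next, invoking the non-archimedean pluripotential theory of Boucksom--Favre--Jonsson, which is particularly tractable in the trivially valued setting, I will construct a bounded continuous model function $\varphi\colon\pr^{r,\an}_K\to\mathbb{R}$ satisfying $(dd^c\varphi)^r = \delta_x - \delta_{x_G}$ as signed measures. Setting $\psi_n := d^{-n}(f^n)^*\varphi$, the functoriality of $dd^c$ combined with $(f^n)^*\delta_{x_G} = d^{rn}\delta_{x_G}$ yields
\[
(dd^c\psi_n)^r = \mu_n-\delta_{x_G}.
\]
The crucial uniform estimate is $\|(f^n)^*\varphi\|_\infty = \|\varphi\|_\infty$, which holds because $f^\an$ is surjective and pullback preserves sup-norm; consequently $\|\psi_n\|_\infty\leq\|\varphi\|_\infty/d^n\to 0$. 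Invoking continuity of the non-archimedean Monge--Amp\`ere operator on uniformly bounded sequences of continuous model functions, one obtains $(dd^c\psi_n)^r\to 0$ weakly, which is precisely $\mu_n\to\delta_{x_G}$.

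The main obstacle will be the construction of the bounded model potential $\varphi$ (essentially a non-archimedean Green's function for the pair $\{x, x_G\}$), together with the continuity of $(dd^c)^r$ on uniformly bounded sequences. In dimension one these amount to elementary Laplacian calculus on the Berkovich tree. In higher dimensions they genuinely require the non-archimedean pluripotential machinery, and one may need to work with an explicit simplicial skeleton containing $x$ and analyze $\varphi$ as a discrete PL function on that skeleton. An alternative route avoiding the Monge--Amp\`ere machinery would be to combine \hyperref[thmA]{Theorem~\ref*{thmA}} with a density and continuity argument, exploiting the fact that divisorial points whose reduction lies outside $\ms{E}$ are dense and that the map $x\mapsto\mu_n(x)$ has favorable continuity properties under the scaling flow.
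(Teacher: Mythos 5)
Your proposal has a genuine gap, and it is precisely the gap that the paper points to in the introduction as the reason potential-theoretic methods (Favre--Rivera-Letelier style) have not been pushed past $\pr^{1,\an}_K$.

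The critical unresolved step is the construction of a bounded continuous model function $\varphi$ with $(dd^c\varphi)^r = \delta_x - \delta_{x_G}$ as signed measures. In dimension one, $dd^c$ is linear (the Laplacian on the Berkovich tree), so this Green's function exists and the rest of the argument goes through. In dimension $r>1$ the operator $\varphi\mapsto (dd^c\varphi)^r$ is nonlinear, and the standard non-archimedean Monge--Amp\`ere theory (Chambert-Loir, Boucksom--Favre--Jonsson, and others) is set up for $\theta$-psh functions whose Monge--Amp\`ere is a \emph{positive} measure of prescribed total mass; nothing in that machinery produces a bounded continuous potential whose $r$-fold Monge--Amp\`ere is the signed measure $\delta_x-\delta_{x_G}$. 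Likewise, the continuity of $(dd^c)^r$ under uniform convergence that you invoke is a theorem about psh functions, not about arbitrary bounded model functions. You flag both issues at the end, but your proposed fix (work on a simplicial skeleton and treat $\varphi$ as a PL function) does not make the nonlinear Monge--Amp\`ere of a difference behave linearly, so the conclusion $(dd^c\psi_n)^r\to 0$ remains unjustified. Your alternative route of combining Theorem~\ref{thmA} with a density argument along the scaling flow $v\mapsto sv$ also does not close the gap: the scaling flow preserves the center $\bk{red}(v)$, so if the reduction of $x$ lies in $\ms{E}$ (which is exactly the hard case) the whole scaling orbit stays inside $\bk{red}^{-1}(\ms{E})$ and Theorem~\ref{thmA} never applies to its points.

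The paper avoids Monge--Amp\`ere theory entirely. It reduces equidistribution to the concrete analytic estimate (Proposition~\ref{prop:eq_char}): it suffices that $d^{-mn}\sum_{f^n(w)=v} m_{f^n}(w)|w(\varphi)|\to 0$ for every $\varphi\in K(X)^\times$. This sum is controlled algebraically using three ingredients: the Bourbaki norm formula $\sum_{f(w)=v}m_f(w)\,w(\varphi) = v(N_f(\varphi))$ (Corollary~\ref{cor:bourbaki}), the \emph{tameness} of $v$ (i.e.\ $v\leq C\,\ord_\xi$ where $\xi = \bk{red}(v)$, which holds for divisorial valuations by Tougeron's lemma), and a local intersection bound $\ord_p(\text{local equation of } D)\leq \deg_{L^s}D$ obtained from Weierstrass preparation. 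Feeding in the projection formula $\deg_{L^s} f^n_*D = d^{n(m-1)}\deg_{L^s}D$ produces the decay rate $O(d^{-n})$. This works in all dimensions precisely because it sidesteps the nonlinear pluripotential theory that blocks your route.
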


A \emph{divisorial} point $x\in \pr^{r,\an}_K$ is a point corresponding to a valuation on the function field of $\pr^r_K$ which is proportional to the order of vanishing along an exceptional prime divisor of some blowup $X\to \pr^r_K$ of $\pr^r_K$. Such points are dense in $\pr^{r,\an}_K$. We will actually prove the theorem holds of a more general class of points $x\in \pr^{r,\an}_K$, which we call \emph{tame} points. See \S9 for details.

We expect \hyperref[thmB]{Theorem~\ref*{thmB}} to hold in the case when $K$ is nontrivially valued, as well, and it is even possible that our proof of the theorem can be carried out in this case. However, a direct translation of the proof would require intersection theory over the valuation ring $K^\circ$, a complication we mean to avoid here.

Though we do not prove the  conjecture for all morphisms $f$ of good reduction, we are able to give a simple argument for a slightly weaker equidistribution result, which at the very least supplies some evidence for the veracity of the conjecture when $K$ is trivially valued.

\begin{letthm}\label{thmC} Let $K$ be a trivially valued algebraically closed field, and let $f\colon \pr^r_K\to \pr^r_K$ be a morphism of degree $d\geq 2$. Assume that $\mathrm{char}(K)\nmid d$, and, moreover, that every totally invariant cycle for $f$ is superattracting. Let $x\in \pr^{r,\an}_K$ be such that $x\notin Y^\an$ for any proper totally invariant Zariski closed set $Y\subset \pr^r_K$. Then the Cesaro means \[
\frac{1}{n}\sum_{i=0}^{n-1} d^{-ir}f^{i*}\delta_x\] converge weakly to the Dirac mass at the Gauss point of $\pr^{r,\an}_K$ as $n\to \infty$.
\end{letthm}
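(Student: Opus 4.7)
The plan is to combine a Krylov--Bogolyubov-style compactness argument with Theorem~\ref{thmA}, which applies here because every morphism over a trivially valued field has good reduction. Since $\pr^{r,\an}_K$ is compact Hausdorff, the space of probability measures on it is weak-$*$ compact, so some subsequence of the Cesaro averages $\mu_n := \frac{1}{n}\sum_{i=0}^{n-1} T^i\delta_x$, where $T := d^{-r}(f^{\an})^*$, converges weakly to a probability measure $\mu$. The telescoping identity $T\mu_n - \mu_n = \frac{1}{n}(T^n\delta_x - \delta_x)$, whose right-hand side has total variation at most $2/n$, forces $T\mu = \mu$, equivalently $(f^{\an})^*\mu = d^r\mu$. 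Applying $(f^{\an})_*$ and using the projection formula $f_*f^* = d^r\cdot\mathrm{id}$ (valid for the finite flat morphism $f$ of degree $d^r$), one obtains also $(f^{\an})_*\mu = \mu$, so the support of $\mu$ is both forward and backward $f^{\an}$-invariant.

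The heart of the argument is to show that $\mu(\ms{E}^{\an}) = 0$, where $\ms{E}$ is the maximal proper totally invariant Zariski closed subset of $\pr^r_K$ furnished by Theorem~\ref{thmA}. The hypothesis that $x\notin Y^{\an}$ for every proper totally invariant Zariski closed $Y\subset\pr^r_K$ propagates to every preimage: any $y\in f^{-i}(x)\cap Y^{\an}$ would give $x = (f^{\an})^i(y)\in Y^{\an}$, a contradiction. Hence each $\mu_n$ assigns zero mass to $\ms{E}^{\an}$. However, $\ms{E}^{\an}$ is only closed---not open---in $\pr^{r,\an}_K$, so weak convergence alone does not preclude $\mu$ from concentrating mass on $\ms{E}^{\an}$ in the limit. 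To rule this out, I would decompose $\ms{E}$ into its totally invariant cycles of irreducible components; for each such cycle $C$, the superattracting hypothesis should translate into a nonarchimedean local expansion statement for the inverse branches of $f^{\an}$ near $C^{\an}$, yielding a quantitative bound of the form $(T^n\delta_x)(U)\leq C_U\lambda^n$ for every sufficiently small open neighborhood $U\supset C^{\an}$ and some $\lambda<1$. Cesaro averaging then gives $\mu(U) = 0$; shrinking $U$ to $C^{\an}$ and running over all totally invariant cycles yields $\mu(\ms{E}^{\an}) = 0$.

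Once this vanishing is established, Theorem~\ref{thmA} closes the argument: for $\mu$-a.e.\ $y$ one has $T^n\delta_y\to\delta_{x_g}$, so by dominated convergence (applied to the bounded functions $y\mapsto\int\phi\,d(T^n\delta_y)$ for $\phi$ continuous on $\pr^{r,\an}_K$) we have $\mu = T^n\mu\to\delta_{x_g}$; since the left-hand side is independent of $n$, $\mu = \delta_{x_g}$. As every subsequential weak-$*$ limit equals $\delta_{x_g}$, the entire sequence $\mu_n$ converges weakly to $\delta_{x_g}$. The main technical obstacle is the expansion estimate near superattracting cycles in the Berkovich setting: one needs a uniform quantitative statement about how quickly the inverse branches of $f^{\an}$ push mass away from each totally invariant cycle, which will require careful local analysis, likely exploiting the skeletal structure of $\pr^{r,\an}_K$ over a trivially valued field.
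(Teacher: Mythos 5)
Your skeleton is essentially the right one and matches the paper's: Krylov--Bogolyubov compactness to extract an invariant limit $\mu$ of the Cesaro averages, a key step showing $\mu$ gives no mass to the ``bad'' set, and then an appeal to the equidistribution theorem for maps of good reduction to conclude $\mu=\delta_{x_g}$. However, there are two genuine problems with the way you close the argument.

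First, you conflate $\ms{E}^\an = \pi^{-1}(\ms{E})$ (which is closed, and which the hypothesis on $x$ controls) with $\bk{red}^{-1}(\ms{E})$ (which is open and is what Theorem~\ref{thmA}/Theorem~\ref{equid} actually requires). The hypothesis $x\notin Y^\an$ is a statement about $\pi(x)$, not about $\bk{red}(x)$; the interesting case of Theorem~\ref{thmC} is precisely when $\pi(x)\notin\ms{E}$ but $\bk{red}(x)\in\ms{E}$, in which case $\delta_x$ already places full mass on $\bk{red}^{-1}(\ms{E})$. If your proposed local-expansion estimate only yields $\mu(\ms{E}^\an)=0$, it does not verify the hypothesis of Theorem~\ref{thmA} for $\mu$-a.e.\ point; you would actually need $\mu(\bk{red}^{-1}(W))=0$ for each totally invariant irreducible $W$, and the sets $\bk{red}^{-1}(W)$ are much bigger than $W^\an$.

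Second, the step you flag as the main technical obstacle --- proving a geometric decay $(T^n\delta_x)(U)\leq C_U\lambda^n$ for small open $U$ around a superattracting cycle --- is substantially stronger than what is needed and is not how the paper proceeds. Indeed, if such a pointwise decay held with $U=\bk{red}^{-1}(W)$, one could dispense with Cesaro averaging entirely and prove full convergence $T^n\delta_x\to\delta_{x_g}$, strengthening Theorem~\ref{thmC} to the conjecture; the paper's restriction to Cesaro means is evidence that this decay was not within reach. The paper instead runs a self-referencing inequality on the \emph{limit} measure $\mu$, exploiting its exact invariance $\mu=d^{-rns}f^{ns*}\mu$. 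Concretely, for $W\subsetneq \pr^r_K$ irreducible, periodic of period $s$, and totally invariant, one sets $\varphi(w)=w(\mf{m}_W)$ if $\bk{red}(w)\in W$ and $\varphi(w)=0$ otherwise; this is continuous, vanishes off $\bk{red}^{-1}(W)$, and is strictly positive on it. The superattracting hypothesis $f^{ns*}\mf{m}_W\subseteq\mf{m}_W^2$ (for $n$ large) yields $d^{-rns}f^{ns}_*\varphi\leq\tfrac12\varphi$, so
\[
0\leq \int\varphi\,d\mu \;=\; d^{-rns}\!\int f^{ns}_*\varphi\,d\mu \;\leq\; \tfrac12\int\varphi\,d\mu,
\]
forcing $\int\varphi\,d\mu\in\{0,+\infty\}$; the $+\infty$ case is ruled out by $\pi(x)\notin W$, and $\int\varphi\,d\mu=0$ gives exactly $\mu(\bk{red}^{-1}(W))=0$. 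This is the mechanism by which the superattracting hypothesis enters, and it crucially uses the invariance of $\mu$ rather than a quantitative estimate along the orbit; you should replace your proposed expansion lemma with this argument.
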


The term \emph{superattracting} here means the following. If $V\subseteq \pr^r_K$ is an irreducible subvariety which is periodic in the sense that $f^s(V) = V$ for some $s\geq 1$, then this periodic cycle is superattracting when there is an $n\geq 1$ such that $f^{ns*}\mf{m}_V\subseteq \mf{m}_V^2$, where $\mf{m}_V$ is the maximal ideal of the local ring $\mc{O}_{\pr^r_K, V}$. This generalizes the standard notion of a superattracting cycle in dimension $r = 1$.

It should be noted that the proofs we give for \hyperref[thmA]{Theorems~\ref*{thmA}}, \ref{thmB}, and \ref{thmC} use heavily that the map $f$ under consideration has good reduction. Proving the \hyperref[conjecture]{conjecture} for general $f$ would require new tools.

The majority of this article will be spent proving \hyperref[thmA]{Theorem~\ref*{thmA}}. The idea behind the proof is simple: we will \emph{approximate} the dynamics of $f$ on $\pr^{r,\an}_K$ by the dynamics of the reduced map $\tilde{f}$ on $\pr^r_k$. Specifically, we will prove a version of \hyperref[thmA]{Theorem~\ref*{thmA}} for $\tilde{f}\colon \pr^r_k\to \pr^r_k$, and then lift it to the Berkovich setting via the reduction map. As a consequence, roughly the first half of this article will be spent not in the Berkovich setting, but in the classical algebro-geometric setting of varieties over $k$.

In \S2 through \S4 we develop the tools needed to prove the equidistribution theorem in the classical algebro-geometric setting. In \S2, we will discuss two multiplicities associated to finite endomorphisms of projective varieties. In \S3, we briefly develop a language of Borel measures on projective varieties, and, crucially, the notion of a pull-back of such a measure via a finite morphism. In \S4 we adapt common techniques for detecting totally invariant behavior from the setting of complex dynamics to dynamics over the (arbitrary) algebraically closed field $k$. It is here that we prove statement (1) in \hyperref[thmA]{Theorem~\ref*{thmA}}, the existence of a finite exceptional set, and here that the assumption $\mathrm{char}(k)\nmid d$ comes into play. We will also show in \S4 that generic morphisms $f$ have empty exceptional set.

Finally, in \S5, we prove the equidistribution theorem in the classical algebro-geometric setting, namely, for noninvertible polarized endomorphisms of projective varieties over $k$. This theorem is the technical heart of \hyperref[thmA]{Theorem~\ref*{thmA}}, but is also interesting in its own right as a nearly complete analogue of the complex equidistribution theorem in the purely algebraic setting.

Beginning in \S6, we move on to the Berkovich setting. In \S6 we briefly review the Berkovich analytification of varieties over nonarchimedean fields $K$, discuss multiplicities associated to finite morphisms of analytic varieties, and define the pull-back of Radon measures on these varieties. In \S7 we   will discuss models of analytic spaces, the notion of reduction, and define precisely what we will mean by a \emph{map of good reduction}. Finally, \S8 is devoted to the proof of \hyperref[thmA]{Theorem~\ref*{thmA}} and \S9 to the proofs of \hyperref[thmB]{Theorems~\ref*{thmB}} and \ref{thmC}.

\subsection*{Acknowledgements} I want to wholeheartedly thank my advisor Mattias Jonsson for his insight, encouragement, and unflagging support throughout the course of this project. I would also like to thank Charles Favre and Matt Baker for useful conversations on this and related topics, and the referee for useful commentary. This work was supported by the grants DMS-0602191, DMS-0901073 and DMS-1001740. Also, a portion of this work was done while the author was a visiting researcher at the Institute for Computational and Experimental Research Mathematics.

\section{Multiplicities associated to finite morphisms}

In this section we briefly review some basic algebro-geometric concepts we will need later. The setup for the entirety of this section is as follows. Let $k$ be an algebraically closed field, and let $f\colon X\to Y$ be a finite surjective flat morphism between two irreducible varieties over $k$. Many of the statements in this section hold in greater generality, but for the sake of concreteness we will stick to this very specific situation. The flatness assumption will be needed throughout this article. We note, however, that in the case where $X$ and $Y$ are both smooth, any finite surjective morphism $f\colon X\to Y$ is flat. We will always regard $X$ and $Y$ as schemes, thus allowing for non-closed points.

The goal of this section is to define two multiplicities associated to $f$, and to discuss their relationship. The first assigns to every point $x\in X$ an integer $m_f(x)$ that we will call the \emph{multiplicity} of $f$ at $x$. The multiplicity function $m_f\colon X\to \N$ will be used in \S3 to define the pull-back of measures on varieties. The second assigns to each point $x\in X$ an integer $v_f(x)$, which will be called the \emph{generic multiplicity} of $f$ at $x$. The generic multiplicity function $v_f\colon X\to \N$  will be used in \S4 to detect totally invariant behavior in dynamical systems.

We begin by fixing notation. The structure sheaves of $X$ and $Y$ will be denoted $\mc{O}_X$ and $\mc{O}_Y$. If $x\in X$ is a point, then $\mf{m}_x$ denotes the maximal ideal of the local ring $\mc{O}_{X,x}$, and $\kappa(x)$ denotes its residue field; similar notation is used for points of $Y$. Recall that the \emph{degree} of $f$ is the degree of the field extension $k(X)/f^*k(Y)$, where $k(X)$ and $k(Y)$ are the function fields of $X$ and $Y$. This degree will be written $[X:_fY]$. Similarly, $[X:_fY]_s$ and $[X:_fY]_i$ will denote the separable and purely inseparable factors of this degree. 

The main definition of the section is the following.

\begin{Def} Let $x\in X$ and $y = f(x)$. The \emph{multiplicity} of $f$ at $x$ is the integer \[
m_f(x) :=  \dim_{\kappa(y)}(\mc{O}_{X,x}/\mf{m}_y\mc{O}_{X,x}),\] where $\mc{O}_{X,x}$ is viewed as an $\mc{O}_{Y,y}$-module via $f$. Let $E = \ol{\{x\}}$ and $F = \ol{\{y\}}$. Then the \emph{generic multiplicity} of $f$ at $x$ is the integer \[
v_f(x) := [E:_f F]_i\times \mathrm{length}_{\mc{O}_{X,x}}(\mc{O}_{X,x}/\mf{m}_y\mc{O}_{X,x}).\] It will sometimes be convenient to write $m_f(E)$ and $v_f(E)$ in place of $m_f(x)$ and $v_f(x)$.
\end{Def}

\begin{lem}\label{lem:lengths} Let $(A,\mf{m})$  and $(B,\mf{n})$ be Noetherian local rings, with $B$ a finite flat $A$-module. Let $\mf{a}$ be an $\mf{m}$-primary ideal of $A$. Then the following identities hold: \begin{enumerate}
\item[$1.$] $\mathrm{length}_{B}(B/\mf{a}B) = \mathrm{length}_A(A/\mf{a})\mathrm{length}_B(B/\mf{m}B)$.
\item[$2.$] $\mathrm{length}_A(B/\mf{a}B) = \mathrm{length}_B(B/\mf{a}B)[B/\mf{n} : A/\mf{m}]$.
\end{enumerate}
\end{lem}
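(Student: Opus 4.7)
The plan is to prove both identities by building explicit composition series and exploiting additivity of length, with flatness of $B$ over $A$ used critically in (1).

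For (1), the strategy is to transport a composition series for $A/\mf{a}$ across the flat base change $-\otimes_A B$. Since $\mf{a}$ is $\mf{m}$-primary and $A$ is Noetherian, $A/\mf{a}$ is an Artinian local ring of length $\ell := \mathrm{length}_A(A/\mf{a})$, and admits a filtration $0 = M_0\subset M_1\subset \cdots \subset M_\ell = A/\mf{a}$ by $A$-submodules with each successive quotient isomorphic to $A/\mf{m}$. Flatness of $B$ over $A$ ensures that applying $-\otimes_A B$ preserves the resulting short exact sequences, producing a filtration of $(A/\mf{a})\otimes_A B\cong B/\mf{a}B$ by $B$-submodules in which each successive quotient is isomorphic to $(A/\mf{m})\otimes_A B \cong B/\mf{m}B$. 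Additivity of length over $B$ then gives $\mathrm{length}_B(B/\mf{a}B) = \ell\cdot \mathrm{length}_B(B/\mf{m}B)$, which is (1). (One should briefly note that $B/\mf{m}B$ does have finite length over $B$: because $B$ is finite over $A$, $\mf{m}B\subseteq \mf{n}$, so $B/\mf{m}B$ is a finite-dimensional $A/\mf{m}$-algebra, hence Artinian.)

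For (2), the plan is to prove the general statement that any $B$-module $N$ of finite $B$-length satisfies $\mathrm{length}_A(N) = \mathrm{length}_B(N)\cdot [B/\mf{n}:A/\mf{m}]$, and then apply it to $N = B/\mf{a}B$. A composition series for $N$ as a $B$-module has all successive quotients isomorphic to $B/\mf{n}$; the $A$-length of $B/\mf{n}$ equals its dimension as an $A/\mf{m}$-vector space, namely $[B/\mf{n}:A/\mf{m}]$, which is finite because $B$ is finite over $A$. Additivity of $A$-length across this composition series then yields the identity. The hypothesis that $N = B/\mf{a}B$ has finite length over $A$ is automatic, since $B/\mf{a}B$ is a finitely generated module over the Artinian ring $A/\mf{a}$.

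There is no real obstacle here: both parts are essentially bookkeeping with composition series. The only subtlety worth flagging is the necessity of flatness in (1)---without it, $-\otimes_A B$ would not preserve the filtration of $A/\mf{a}$ exactly, and one could only extract an inequality rather than an equality of lengths.
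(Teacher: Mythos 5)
Your proof is correct and follows essentially the same route as the paper: in both parts you build a composition series (for $A/\mf{a}$ over $A$ in part 1, for $B/\mf{a}B$ over $B$ in part 2), identify the successive quotients with $A/\mf{m}$ or $B/\mf{n}$ using locality, transport via flat base change in part 1, and conclude by additivity of length. The extra remarks you add (finiteness of $\mathrm{length}_B(B/\mf{m}B)$, and the slight generalization of part 2 to arbitrary finite-length $B$-modules) are harmless refinements of the same argument.
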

\begin{proof} (1) Let $A/\mf{a} = I_0\supsetneq I_1\supsetneq \cdots\supsetneq I_N = 0$ be a composition series of $A/\mf{a}$. Since $A$ is local, the successive quotients $I_i/I_{i+1}$ are each isomorphic to $A/\mf{m}$.  Because $B$ is a flat over $A$, one obtains a filtration $
B/\mf{a}B = B\otimes_A I_0\supseteq B\otimes_A I_1\supseteq\cdots\supseteq B\otimes_A I_N = 0$ of $B/\mf{a}B$, whose successive quotients are $(B\otimes_A I_i)/(B\otimes_A I_{i+1})\cong B\otimes_A (I_i/I_{i+1})\cong B\otimes_A A/\mf{m}\cong B/\mf{m}B$. Thus $\mathrm{length}_B(B/\mf{a}B) = N\times\mathrm{length}_B(B/\mf{m}B)$, as desired.

(2) Now fix a composition series $B/\mf{a}B = J_0\supsetneq J_1\supsetneq \cdots\supsetneq J_M = 0$ of $B/\mf{a}B$ as a $B$-module. Since $B$ is local, the quotients $J_i/J_{i+1}$ are all isomorphic to $B/\mf{n}$. Thus \[\mathrm{length}_A(B/\mf{a}B) = M\times \mathrm{length}_A(B/\mf{n}) = M\times [B/\mf{n} : A/\mf{m}],\] as desired.
\end{proof}

\begin{prop}\label{prop:multiplicity_relation} The multiplicity functions $m_f\colon X\to \N$ and $v_f\colon X\to \N$ are related as follows. Let $x\in X$ and $y = f(x)$. Let $E = \ol{\{x\}}$ and $F = \ol{\{y\}}$. Then \[m_f(x) = v_f(x)[E:_fF]_s.\] In particular, if $x$ is a closed point, then $m_f(x) = v_f(x)$.
\end{prop}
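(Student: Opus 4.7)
The plan is to reduce the statement to a direct application of Lemma \ref{lem:lengths}(2), with the separable and inseparable parts of the residue field extension $\kappa(x)/\kappa(y)$ doing the bookkeeping between the two multiplicities.

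First, I observe that $m_f(x)$ can be rewritten as a length over $\mathcal{O}_{Y,y}$ rather than a $\kappa(y)$-dimension. Indeed, $\mathcal{O}_{X,x}/\mathfrak{m}_y\mathcal{O}_{X,x}$ is a module over the field $\mathcal{O}_{Y,y}/\mathfrak{m}_y = \kappa(y)$, so its $\kappa(y)$-dimension equals its length as an $\mathcal{O}_{Y,y}$-module. To use the lemma, I need $\mathcal{O}_{X,x}$ to be a finite flat $\mathcal{O}_{Y,y}$-module: flatness is immediate from the hypothesis on $f$, and finiteness comes from the standard fact that since $f$ is finite, localizing at the closed point $y\in \mathrm{Spec}(\mathcal{O}_{Y,y})$ gives a semi-local finite $\mathcal{O}_{Y,y}$-algebra which decomposes as a product over the (finitely many) points of $f^{-1}(y)$, with $\mathcal{O}_{X,x}$ as one of the factors.

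Next I apply Lemma \ref{lem:lengths}(2) with $A = \mathcal{O}_{Y,y}$, $B = \mathcal{O}_{X,x}$, and $\mathfrak{a} = \mathfrak{m}_y$, obtaining
\[
m_f(x) \;=\; \mathrm{length}_{\mathcal{O}_{X,x}}\!\bigl(\mathcal{O}_{X,x}/\mathfrak{m}_y\mathcal{O}_{X,x}\bigr)\cdot[\kappa(x):\kappa(y)].
\]
The key identification is now that $[\kappa(x):\kappa(y)] = [E:_fF]$. This follows because $f$ is finite and hence closed, so $f(E) = \overline{\{y\}} = F$, and then $x$ and $y$ are the generic points of $E$ and $F$ respectively, making $\kappa(x) = k(E)$ and $\kappa(y) = k(F)$ with $f^*k(F)$ sitting inside $k(E)$ by the restriction $f|_E\colon E\to F$. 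Splitting this degree as $[E:_fF] = [E:_fF]_s\cdot[E:_fF]_i$ and absorbing the inseparable factor into $v_f(x)$ as in its definition gives
\[
m_f(x) \;=\; [E:_fF]_s\cdot [E:_fF]_i\cdot \mathrm{length}_{\mathcal{O}_{X,x}}\!\bigl(\mathcal{O}_{X,x}/\mathfrak{m}_y\mathcal{O}_{X,x}\bigr) \;=\; v_f(x)[E:_fF]_s,
\]
as desired. The final claim for closed points is immediate: if $x$ is closed, then so is $y$ (finite morphisms preserve closed points), so $\kappa(x) = \kappa(y) = k$ by the Nullstellensatz, forcing $[E:_fF] = 1$ and in particular $[E:_fF]_s = 1$.

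The only real step with content is checking that the finite flat hypothesis lifts to the local rings at $x$ and $y$ (rather than just stalks of $f_*\mathcal{O}_X$), and correctly interpreting $[E:_fF]$ as the residue field degree. Everything else is formal, and no step looks likely to present a serious obstacle.
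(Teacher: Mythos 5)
Your overall strategy is the same as the paper's (reduce to Lemma \ref{lem:lengths}(2) plus the observation that $[E:_fF] = [\kappa(x):\kappa(y)]$), but there is a real error in how you set up the lemma. You take $A = \mc{O}_{Y,y}$, $B = \mc{O}_{X,x}$, $\mf{a} = \mf{m}_y$, and to satisfy the hypothesis of the lemma you assert that $\mc{O}_{X,x}$ is a \emph{finite} $\mc{O}_{Y,y}$-module, on the grounds that the finite semi-local $\mc{O}_{Y,y}$-algebra $(f_*\mc{O}_X)_y$ ``decomposes as a product over the points of $f^{-1}(y)$, with $\mc{O}_{X,x}$ as one of the factors.'' Both claims are false in general. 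Since $X$ is irreducible, $(f_*\mc{O}_X)_y$ is a subring of $k(X)$ and hence a domain with connected spectrum, so it cannot decompose as a nontrivial product; and $\mc{O}_{X,x}$ itself is typically not finite over $\mc{O}_{Y,y}$ when $f^{-1}(y)$ has more than one point. Concretely, for $f\colon \A^1\to\A^1$, $t\mapsto t^2$, over a field of characteristic $\neq 2$, with $y$ the point $s=1$ and $x$ the point $t=1$: if $k[t]_{(t-1)}$ were generated over $k[s]_{(s-1)}$ by $1, t$, then writing $\tfrac{1}{t+1} = a(s) + b(s)t$ forces $b(s) = \tfrac{1}{s-1}\notin k[s]_{(s-1)}$, a contradiction. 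The decomposition into a product only becomes available after passing to the fiber, i.e.\ after tensoring with $\kappa(y)$.

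The paper sidesteps this entirely by applying Lemma \ref{lem:lengths}(2) with $A = \kappa(y)$, $B = \mc{O}_{X,x}/\mf{m}_y\mc{O}_{X,x}$, and $\mf{a} = 0$: here $B$ \emph{is} a finite $A$-module (it is a direct factor of the Artinian fiber ring $(f_*\mc{O}_X)_y\otimes\kappa(y)$, which is finite over $\kappa(y)$ by finiteness of $f$), and flatness over a field is automatic. With that instantiation, $\mathrm{length}_A(B) = \dim_{\kappa(y)} B = m_f(x)$, $\mathrm{length}_B(B) = \mathrm{length}_{\mc{O}_{X,x}}(\mc{O}_{X,x}/\mf{m}_y\mc{O}_{X,x})$, and $[B/\mf{n}:A] = [\kappa(x):\kappa(y)]$, giving the formula directly. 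Your choice of $A$ and $B$ can also be repaired: inspecting the proof of Lemma \ref{lem:lengths}(2) shows that it only uses that $B/\mf{a}B$ has finite length over $B$, not that $B$ is finite over $A$, and that finite-length condition does hold here since $f$ is finite. But as written your justification is incorrect, and the cleaner move is the paper's choice of $A = \kappa(y)$. Everything else in your argument (the identification of $[E:_fF]$ with the residue field degree, the splitting into separable and inseparable parts, and the closed-point case) is fine.
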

\begin{proof} Applying \hyperref[lem:lengths]{Lemma~\ref*{lem:lengths}(2)} to the case where $A = \kappa(y)$, $\mf{a} = 0$, and $B = \mc{O}_{X,x}/\mf{m}_y\mc{O}_{X,x}$ yields $\dim_{\kappa(y)}(\mc{O}_{X,x}/\mf{m}_y\mc{O}_{X,x}) = \mathrm{length}_{\mc{O}_{X,x}}(\mc{O}_{X,x}/\mf{m}_y\mc{O}_{X,x})\times [\kappa(x):\kappa(y)]$, which is exactly the desired identity $m_f(x) = v_f(x)[E:_f F]_s$. If $x$ and $y$ are closed points, then $\kappa(x) = \kappa(y) = k$, since $k$ is algebraically closed. Thus $[E:_f F] = 1$ in this case, so that $m_f(x) = v_f(x)$.
\end{proof}

\begin{thm}\label{thm:preimage_count} Every point $y\in Y$ has exactly $[X:_fY]$ preimages when counted according to their multiplicity. That is, $[X:_fY] = \sum_{f(x) = y} m_f(x)$.
\end{thm}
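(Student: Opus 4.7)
The plan is to leverage flatness to interpret the degree $[X:_fY]$ as a fiberwise dimension that is the same at every point of $Y$, and then to decompose the scheme-theoretic fiber over $y$ into a product of local Artinian rings indexed by the set-theoretic preimages $f^{-1}(y)$. This is essentially the Noether-type interpretation of degree for a finite flat map, and the theorem will fall out of computing one invariant in two ways.

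First, I would consider the sheaf $\mathcal{F} := f_*\mathcal{O}_X$. Because $f$ is finite, $\mathcal{F}$ is a coherent $\mathcal{O}_Y$-module, and because $f$ is flat, $\mathcal{F}$ is flat over $\mathcal{O}_Y$. A finitely generated flat module over a Noetherian local ring is free, so every stalk $\mathcal{F}_y$ is a free $\mathcal{O}_{Y,y}$-module, and irreducibility of $Y$ forces the rank to be globally constant. Computing that rank at the generic point $\eta \in Y$, where $\mathcal{F}_\eta$ is identified with $k(X)$ regarded as a $k(Y) = \kappa(\eta)$-vector space, one recovers exactly $[X:_fY]$. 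Consequently, for every $y \in Y$, the $\kappa(y)$-dimension of the fiber $\mathcal{F}_y \otimes_{\mathcal{O}_{Y,y}} \kappa(y)$ equals $[X:_fY]$.

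Next I would compute this same fiber dimension in terms of $m_f$. Working locally, I may assume $Y = \operatorname{Spec} B$ and $X = \operatorname{Spec} A$, with $y$ corresponding to a prime $\mathfrak{q} \subset B$. Then $\mathcal{F}_y \otimes \kappa(y)$ is the finite $\kappa(\mathfrak{q})$-algebra $A \otimes_B \kappa(\mathfrak{q})$, which is Artinian. Its maximal ideals correspond bijectively to the primes of $A$ lying over $\mathfrak{q}$, i.e., to the finitely many preimages $x_1, \ldots, x_n$ of $y$. The structure theorem for Artinian rings then yields a decomposition $A \otimes_B \kappa(\mathfrak{q}) \cong \prod_{i=1}^n \mathcal{O}_{X,x_i}/\mathfrak{m}_y \mathcal{O}_{X,x_i}$, whose total $\kappa(y)$-dimension equals $\sum_i m_f(x_i)$ directly from the definition of $m_f$. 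Combining with the previous paragraph gives the desired identity $[X:_fY] = \sum_{f(x)=y} m_f(x)$.

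The only substantive ingredients are the implication \emph{coherent plus flat implies locally free of constant rank} on an irreducible Noetherian scheme, and the structure theorem for zero-dimensional Noetherian rings; both are entirely standard. I do not foresee any real obstacle here, since the whole statement reduces to computing the rank of one locally free sheaf from two different angles: globally, via the generic fiber, and locally, via the Artinian decomposition at $y$.
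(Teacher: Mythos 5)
Your argument is essentially the same as the paper's: both compute the rank of the locally free sheaf $f_*\mathcal{O}_X$ once at the generic point (giving $[X:_fY]$) and once by decomposing its fiber at $y$ as $\bigoplus_{f(x)=y}\mathcal{O}_{X,x}/\mathfrak{m}_y\mathcal{O}_{X,x}$ (giving $\sum m_f(x)$). You spell out a couple of background facts the paper leaves implicit, but there is no difference in method.
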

\begin{proof} Since $f$ is finite and flat, $f_*\mc{O}_X$ is a locally free $\mc{O}_Y$-module of some rank  $r<\infty$. The fiber of $f_*\mc{O}_X$ at a point $y\in Y$ is \[
(f_*\mc{O}_X)_y/\mf{m}_y(f_*\mc{O}_X)_y \cong \bigoplus_{f(x) = y} \mc{O}_{X,x}/\mf{m}_y\mc{O}_{X,x}.\] Comparing the $\kappa(y)$-dimension of both sides of this isomorphism, we see $r = \sum_{f(x) = y}m_f(x)$. In the special case where $y$ is the generic point of $Y$, this identity yields $r = [X:_fY]$.
\end{proof}

\begin{prop}\label{prop:multiplicativity} Suppose that $g\colon Y\to Z$ is another finite surjective flat morphism between irreducible varieties. Let $x\in X$ and $y = f(x)$. Then the multiplicity and generic multiplicity are multiplicative in the sense that $m_{g\circ f}(x) = m_f(x)m_g(y)$ and $v_{g\circ f}(x) = v_f(x)v_g(y)$.
\end{prop}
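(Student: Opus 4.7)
Write $z = g(y) = (g\circ f)(x)$ and set $G = \ol{\{z\}}$. Both identities split into a ``length/dimension'' factor (controlled by the local rings) and a ``field extension'' factor (controlled by the tower $\kappa(z)\subseteq\kappa(y)\subseteq\kappa(x)$). My approach is to establish the identity for the generic multiplicity $v$ directly, using \hyperref[lem:lengths]{Lemma~\ref*{lem:lengths}} and multiplicativity of inseparable degrees in towers, and then deduce the identity for $m$ via \hyperref[prop:multiplicity_relation]{Proposition~\ref*{prop:multiplicity_relation}}.

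For the generic multiplicity, I would observe that because $g$ is finite the ideal $\mf{m}_z\mc{O}_{Y,y}$ is $\mf{m}_y$-primary, and because $f$ is finite and flat $\mc{O}_{X,x}$ is a finite flat $\mc{O}_{Y,y}$-module. Applying \hyperref[lem:lengths]{Lemma~\ref*{lem:lengths}(1)} with $A = \mc{O}_{Y,y}$, $B = \mc{O}_{X,x}$, and $\mf{a} = \mf{m}_z\mc{O}_{Y,y}$ then yields
\[
\mathrm{length}_{\mc{O}_{X,x}}(\mc{O}_{X,x}/\mf{m}_z\mc{O}_{X,x}) \;=\; \mathrm{length}_{\mc{O}_{Y,y}}(\mc{O}_{Y,y}/\mf{m}_z\mc{O}_{Y,y})\cdot\mathrm{length}_{\mc{O}_{X,x}}(\mc{O}_{X,x}/\mf{m}_y\mc{O}_{X,x}).
\]
Combining this with the tower law for inseparable degrees, $[E:_{g\circ f}G]_i = [E:_f F]_i\cdot[F:_g G]_i$, gives $v_{g\circ f}(x) = v_f(x)v_g(y)$ on the nose.

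For the ordinary multiplicity, I invoke \hyperref[prop:multiplicity_relation]{Proposition~\ref*{prop:multiplicity_relation}} to rewrite each of $m_f(x)$, $m_g(y)$, and $m_{g\circ f}(x)$ as the corresponding $v$ times the relevant separable degree. The tower law for separable degrees, $[E:_{g\circ f}G]_s = [E:_f F]_s[F:_g G]_s$, together with the $v$-identity just proved, then gives $m_{g\circ f}(x) = m_f(x)m_g(y)$. The main obstacle is essentially none: the proof is a bookkeeping assembly of \hyperref[lem:lengths]{Lemma~\ref*{lem:lengths}}, \hyperref[prop:multiplicity_relation]{Proposition~\ref*{prop:multiplicity_relation}}, and multiplicativity of field degrees. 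The one point worth checking up front is that $g\circ f$ is again finite, surjective, and flat, so that $m_{g\circ f}$ and $v_{g\circ f}$ are defined in the first place; this is standard.
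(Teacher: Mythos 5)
Your proof is correct and follows essentially the same route as the paper's: first reduce to multiplicativity of the generic multiplicity, then establish the length factor via Lemma~\ref{lem:lengths}(1) (applied with $A = \mc{O}_{Y,y}$, $B = \mc{O}_{X,x}$, $\mf{a} = \mf{m}_z\mc{O}_{Y,y}$) and combine with multiplicativity of inseparable degrees, finally deducing the $m$-identity from Proposition~\ref{prop:multiplicity_relation} and the tower law for separable degrees. The only difference is that you spell out a couple of verifications (e.g.\ that $\mf{m}_z\mc{O}_{Y,y}$ is $\mf{m}_y$-primary because $g$ is finite) that the paper leaves implicit; this is a welcome level of care but not a new idea.
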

\begin{proof} By \hyperref[prop:multiplicity_relation]{Proposition~\ref*{prop:multiplicity_relation}}, it is enough to show that the generic multiplicity is multiplicative. Moreover, since degrees of inseparability for field extensions are multiplicative, it suffices to show that \[
\mathrm{length}_{\mc{O}_{X,x}}(\mc{O}_{X,x}/\mf{m}_{g(y)}\mc{O}_{X,x}) =\mathrm{length}_{\mc{O}_{X,x}}(\mc{O}_{X,x}/\mf{m}_y\mc{O}_{X,x})\times \mathrm{length}_{\mc{O}_{Y,y}}(\mc{O}_{Y,y}/\mf{m}_{g(y)}\mc{O}_{Y,y}).\] This is exactly \hyperref[lem:lengths]{Lemma~\ref*{lem:lengths}(1)}.
\end{proof}

\begin{thm}[Lejeune-Jalabert and Teissier]\label{LJT} There is a coherent sheaf $\mc{F}$ on $X$ whose fiber dimensions are given by $v_f$. As a consequence, the generic multiplicity function $v_f\colon X\to~\N$ is Zariski upper semicontinuous.
\end{thm}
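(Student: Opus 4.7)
My plan is to separate the theorem into its two natural pieces---the existence of the coherent sheaf $\mc{F}$, and the Zariski upper semicontinuity of $v_f$---and handle the second as an immediate formal consequence of the first, invoking the theorem of Lejeune-Jalabert and Teissier for the construction itself.

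For the upper semicontinuity, I would appeal to the standard fact that for any coherent sheaf $\mc{F}$ on a Noetherian scheme $X$, the fiber-dimension function $x\mapsto \dim_{\kappa(x)}(\mc{F}_x/\mf{m}_x\mc{F}_x)$ is Zariski upper semicontinuous. This is a direct application of Nakayama's lemma: any lift to $\mc{F}_x$ of a $\kappa(x)$-basis of the fiber at $x$ generates $\mc{F}$ on some open neighborhood $U$ of $x$, so the fiber dimension throughout $U$ is bounded above by the fiber dimension at $x$. Applied to the sheaf $\mc{F}$ produced in the first part, this yields upper semicontinuity of $v_f$ at once.

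For the existence of $\mc{F}$, I would cite the result of Lejeune-Jalabert and Teissier and sketch only the motivation. By definition the generic multiplicity factors as $v_f(x)=[E:_fF]_i\cdot \ell(x)$ with $\ell(x)=\mathrm{length}_{\mc{O}_{X,x}}(\mc{O}_{X,x}/\mf{m}_y\mc{O}_{X,x})$. The length factor $\ell$ is controlled by the scheme-theoretic fibers of $f$ and can be encoded via Fitting ideals of a presentation of $f_*\mc{O}_X$ (which is locally free of finite rank by the flatness of $f$, as used in the proof of \hyperref[thm:preimage_count]{Theorem~\ref*{thm:preimage_count}}) pulled back along $f$; the inseparable factor $[E:_fF]_i$ can be absorbed by a relative Frobenius twist, which is well-behaved because the ground field $k$ is perfect.

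The main obstacle is globalizing these stratum-wise constructions into a single coherent sheaf on $X$ reproducing $v_f$ at every (not necessarily closed) point of $X$ simultaneously, including the interaction between the length and inseparability contributions at non-generic points of $X$. This is precisely what Lejeune-Jalabert and Teissier accomplish in their construction, and since their argument is intricate and tangential to the rest of the paper, I would invoke it as a black box and pass directly to the corollary.
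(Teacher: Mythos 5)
Your top-level strategy is the same as the paper's: take the coherent sheaf as a black box from Lejeune-Jalabert and Teissier, then deduce upper semicontinuity from the standard Nakayama's lemma argument for fiber dimensions of a coherent sheaf (the paper cites \cite[Example III.12.7.2]{MR0463157} for precisely this). That part of your write-up is correct and matches the paper exactly.

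Where you diverge is in the motivational sketch of how the sheaf is built, and the sketch you give does not describe what Lejeune-Jalabert and Teissier actually do, nor would it obviously work. You propose to encode the length factor $\ell(x)$ via Fitting ideals of (the pullback of) $f_*\mc{O}_X$ and then absorb the inseparability degree $[E:_fF]_i$ by a relative Frobenius twist, and you correctly observe that globalizing such a piecemeal construction is the hard part. But note that $f_*\mc{O}_X$ is locally free by flatness, so its Fitting ideals are trivial and do not see the fiber structure, and treating the two factors of $v_f$ separately is not how the reference handles the problem. The actual construction is a single uniform one that captures both contributions at once: form the fiber product $Z = X\times_Y X$ with both projections equal to $f$, let $\mc{I}$ be the ideal sheaf of the diagonal $\Delta\subseteq Z$, let $\pi\colon Z\to X$ be the first projection, and set $\mc{F} = \pi_*(\mc{O}_Z/\mc{I}^n)$ for any $n\geq [X:_f Y]$. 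The content of Proposition 4.7 of \cite{MR0379897} is that the fiber dimension of this $\mc{F}$ at an arbitrary scheme point $x$ is exactly $v_f(x)$, with no decomposition into length and inseparability needed. Since you defer to the reference for existence, your proof as stated is not wrong, but the heuristic you supply would mislead a reader about what the construction looks like and would not by itself yield the result.
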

\begin{proof}[Sketch] The sheaf $\mc{F}$ is constructed as follows. Let $Z = X\times_f X$, and let $\mc{I}$ denote the ideal sheaf of the diagonal $\Delta\subseteq Z$. Let $\pi\colon Z\to X$ be the projection onto the first coordinate. We then set $\mc{F} = \pi_*(\mc{O}_Z/\mc{I}^n)$, where $n$ is a large enough integer ($n\geq [X:_fY]$ will suffice). The fiber dimension of $\mc{F}$ at any (not necessarily closed) point $x$ is computed in proposition 4.7 of \cite{MR0379897} to be exactly $v_f(x)$. The upper semicontinuity statement is then a consequence of Nakayama's Lemma, see \cite[Example III.12.7.2]{MR0463157}.
\end{proof}

\hyperref[LJT]{Theorem~\ref*{LJT}} gives the reason behind the name \emph{generic multiplicity} of $v_f$. Indeed, if $x\in X$ is any point, then the upper semicontinuity of $v_f$ implies that $v_f(x) = v_f(z) = m_f(z)$ for a nonempty Zariksi open subset of closed points $z$ specializing $x$. That is, $v_f(x)$ is given by the multiplicity of $f$ at general closed points specializing $x$.

\begin{lem}\label{lem:topdeg} There is a nonempty Zariski open set $U\subseteq Y$ such that any closed point $y\in U$ has exactly $[X:_fY]_s$ preimages in $X$.
\end{lem}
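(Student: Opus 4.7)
The plan is to combine the upper semicontinuity of $v_f$ (Theorem \ref{LJT}) with the preimage count (Theorem \ref{thm:preimage_count}). First I would compute the generic multiplicity at the generic point $\eta_X \in X$. Since $f$ is dominant and both $X$ and $Y$ are irreducible, $f(\eta_X) = \eta_Y$, so $\mf{m}_{\eta_Y} = 0$ and hence $\mc{O}_{X,\eta_X}/\mf{m}_{\eta_Y}\mc{O}_{X,\eta_X} = k(X)$, a module of length one over itself. As $\ol{\{\eta_X\}} = X$ and $\ol{\{\eta_Y\}} = Y$, the definition directly gives $v_f(\eta_X) = [X:_fY]_i$.

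Next I would upgrade upper semicontinuity into a two-sided estimate using irreducibility of $X$. For any $x_0 \in X$ the set $\{x : v_f(x) \leq v_f(x_0)\}$ is open (as $v_f$ is integer-valued and upper semicontinuous) and contains $x_0$; since every nonempty Zariski open subset of an irreducible scheme contains the generic point, $v_f(\eta_X) \leq v_f(x_0)$. Thus $v_f \geq [X:_fY]_i$ everywhere, with equality on the nonempty open set
\[U_X \ := \ \{x \in X : v_f(x) = [X:_fY]_i\}.\]

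To descend to $Y$, let $Z := X \setminus U_X$, a proper closed subset of $X$. Since $f$ is finite, $f(Z)$ is closed in $Y$ with $\dim f(Z) = \dim Z < \dim X = \dim Y$, so $U := Y \setminus f(Z)$ is a nonempty Zariski open subset of $Y$. For any closed $y \in U$, each preimage $x \in f^{-1}(y)$ is itself closed (the fiber is zero-dimensional) and lies in $U_X$, so Proposition \ref{prop:multiplicity_relation} gives $m_f(x) = v_f(x) = [X:_fY]_i$. Then Theorem \ref{thm:preimage_count} yields
\[[X:_fY] \ = \ \sum_{f(x)=y} m_f(x) \ = \ \#f^{-1}(y)\cdot [X:_fY]_i,\]
whence $\#f^{-1}(y) = [X:_fY]_s$, as desired.

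The only genuinely nontrivial step is the lower bound $v_f \geq [X:_fY]_i$; once this is in hand, everything else is bookkeeping with the multiplicity identities already established in the section.
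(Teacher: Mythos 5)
Your argument is correct, and it is genuinely different from the paper's proof. The paper reduces to $X$, $Y$ smooth and affine, then factors $f$ through the intermediate variety $Z$ whose function field is the separable closure of $k(Y)$ in $k(X)$; the morphism $X\to Z$ is radicial (a bijection on points), and the result for the separable morphism $Z\to Y$ is cited from Shafarevich. Your approach instead stays entirely inside the multiplicity machinery of the section: you observe that $v_f$ takes the value $[X:_fY]_i$ at the generic point of $X$, invoke the upper semicontinuity from Theorem~\ref{LJT} together with irreducibility to conclude that $v_f\geq [X:_fY]_i$ everywhere with equality on a nonempty open $U_X$, push the bad locus forward along the finite map $f$, and then finish with Proposition~\ref{prop:multiplicity_relation} and Theorem~\ref{thm:preimage_count}. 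Both proofs are valid and both rely on an external input (the paper on Shafarevich for the separable case, you on the Lejeune-Jalabert--Teissier semicontinuity, which the paper itself only sketches); yours has the advantage of making the dependence on the already-developed multiplicity apparatus explicit and avoiding the reduction to the smooth case, while the paper's is closer to the classical field-theoretic picture of separable versus purely inseparable degree. One small point worth stating explicitly: the fact that an integer-valued upper semicontinuous function on an irreducible scheme attains its minimum at the generic point is the load-bearing observation, and you identify it correctly.
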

\begin{proof} Without loss of generality, we may assume that $X$ and $Y$ are both affine and smooth, with coordinate rings $k[Y]\subseteq k[X]$. Let $L$ be the unique intermediate field $k(Y)\subseteq L\subseteq k(X)$ such that $L$ is separable over $k(Y)$ and $k(X)$ is purely inseparable over $L$. Let $A$ be the integral closure of $k[Y]$ in $L$. Then $A$ is the coordinate ring of some irreducible affine variety $Z$ (see \cite[Corollary 13.13]{MR1322960}), and the inclusions $k[Y]\subseteq A\subseteq k[X]$ induce morphisms

\bigskip

 \[\begin{psmatrix}
X & Z & Y
\psset{arrows = ->, nodesep = 3pt}
\ncline{1,1}{1,2}_g
\ncline{1,2}{1,3}_h
\ncarc[arcangle=30]{1,1}{1,3}^f
\end{psmatrix}\]

\bigskip

\noindent Since $k(X)$ is purely inseparable over $L$, each maximal ideal of $A$ has only one maximal ideal in $k[X]$ lying over it, so $g$ is injective. It therefore suffices to prove the theorem for $h$, i.e., we may assume without loss of generality that $[X:_fY]_i = 1$. In this case, the lemma is proved in \cite[\S II.6.3]{MR1328833}.
\end{proof}

\begin{prop}\label{prop:nice_neighborhood} Let $E\subseteq X$ be an irreducible closed subvariety, and set $F = f(E)$. Let $z$ denote the generic point of $E$. Then there is a nonempty Zariski open subset $U$ of $F$ such that for all $y\in U$, \[m_f(z) = \sum_{x\in f^{-1}(y)\cap E}m_f(x).\]
\end{prop}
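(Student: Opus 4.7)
The plan is to approximate $E$ inside $f^{-1}(F)$ by a closed subscheme $\tilde E\subseteq X$ whose underlying reduced scheme is $E$ and which has generic degree $m_f(z)$ over $F$. Consider the ideal sheaf $\mc{J} = f^*\mc{I}_F\cdot\mc{O}_X$ cutting out the scheme-theoretic preimage $f^{-1}(F)\subseteq X$. Since $f$ is finite with $\dim X = \dim Y$, every irreducible component of $f^{-1}(F)$ has dimension $\dim F$ and maps onto $F$, and $E$ is one such component. Let $Q$ denote the primary component of $\mc{J}$ corresponding to the prime ideal $\mf{q}$ of $E$, and set $\tilde E := V(Q)$. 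Localization at the generic point $z$ of $E$ kills every other primary component, giving $\mc{O}_{\tilde E, z} = \mc{O}_{X,z}/\mf{m}_w\mc{O}_{X,z}$, a ring of $\kappa(w)$-dimension $m_f(z)$.

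By generic flatness, the finite morphism $f|_{\tilde E}\colon \tilde E\to F$ is flat over a dense open $U_2\subseteq F$. On $U_2$ the sheaf $(f|_{\tilde E})_*\mc{O}_{\tilde E}$ is locally free, necessarily of rank $m_f(z)$ by the computation at the generic point. Applying the same argument as in the proof of \hyperref[thm:preimage_count]{Theorem~\ref*{thm:preimage_count}} to $f|_{\tilde E}$ over $U_2$ yields, for every $y\in U_2$,
\[
m_f(z) \;=\; \sum_{x\in f^{-1}(y)\cap E}\dim_{\kappa(y)}\!\bigl(\mc{O}_{\tilde E, x}/\mf{m}_y \mc{O}_{\tilde E, x}\bigr).
\]

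The main obstacle is to identify each fiber term on the right with $m_f(x)$, which requires restricting $y$ to a further dense open $U_1\subseteq F$. Let $E_1,\ldots,E_r$ denote the remaining irreducible components of $f^{-1}(F)$. Each intersection $E\cap E_i$ is a proper closed subset of $E$, and since $f|_E$ is finite with $\dim(E\cap E_i)<\dim F$, the image $f(E\cap E_i)$ is a proper closed subset of $F$. The same dimension estimate bounds the image $f(V(\mf{q}'))$ of the vanishing locus of any embedded associated prime $\mf{q}'$ of $\mc{J}$. Taking $U_1\subseteq F$ to be the complement of the union of all these proper closed subsets, for $y\in U_1$ and $x\in f^{-1}(y)\cap E$ the point $x$ lies on no $E_i$ with $i\geq 1$ and on no embedded locus, so every primary component of $\mc{J}$ other than $Q$ localizes to the unit ideal in $\mc{O}_{X,x}$. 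Hence $Q_x = \mc{J}_x = \mc{I}_{F,y}\mc{O}_{X,x}\subseteq \mf{m}_y\mc{O}_{X,x}$, from which $\mc{O}_{\tilde E, x}/\mf{m}_y\mc{O}_{\tilde E, x} = \mc{O}_{X,x}/\mf{m}_y\mc{O}_{X,x}$, a space of $\kappa(y)$-dimension $m_f(x)$. The proposition then follows on $U = U_1\cap U_2$.
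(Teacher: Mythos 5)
Your proof is correct, and it takes a genuinely different route from the paper's. The paper works with the two multiplicity functions developed in \S2: it invokes \hyperref[lem:topdeg]{Lemma~\ref*{lem:topdeg}} (generic closed points have $[E:_fF]_s$ preimages) and the upper semicontinuity of $v_f$ from \hyperref[LJT]{Theorem~\ref*{LJT}} to choose $U$, then carries out a counting argument: for $y\in U$ with preimages $x_1,\ldots,x_r$ in $E$, it picks a generic closed point $w$ in $\ol{\{y\}}\cap U$ and computes $\sum m_f(x_i) = \sum v_f(x_i)[V_i:W]_s = v_f(z)\sum[V_i:W]_s = v_f(z)[E:_fF]_s = m_f(z)$ via \hyperref[prop:multiplicity_relation]{Proposition~\ref*{prop:multiplicity_relation}}. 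Your argument instead bypasses $v_f$ and separable degrees entirely: you isolate the $\mf{q}$-primary component $Q$ of the scheme-theoretic preimage ideal $f^*\mc{I}_F\cdot\mc{O}_X$, observe that the non-reduced scheme $\tilde E = V(Q)$ has generic degree $m_f(z)$ over $F$, and then invoke generic flatness together with the constancy of fiber dimension for a finite flat morphism to spread that count over a dense open $U_2$; the remaining open $U_1$ is chosen to throw away the images of the other components of $f^{-1}(F)$ and of the embedded loci, so that the fiber ring $\mc{O}_{\tilde E,x}/\mf{m}_y\mc{O}_{\tilde E,x}$ coincides with $\mc{O}_{X,x}/\mf{m}_y\mc{O}_{X,x}$. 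What your approach buys is self-containedness: it uses only primary decomposition and Grothendieck's generic flatness, and in particular does not rely on the Lejeune-Jalabert--Teissier semicontinuity result, whose proof the paper only sketches. What the paper's approach buys is economy within its own framework, reusing machinery ($v_f$, \hyperref[lem:topdeg]{Lemma~\ref*{lem:topdeg}}, \hyperref[LJT]{Theorem~\ref*{LJT}}) already set up for the totally-invariance detection in \S4. One small expository gap in your write-up: the point $w$ appearing in ``$\mc{O}_{\tilde E, z} = \mc{O}_{X,z}/\mf{m}_w\mc{O}_{X,z}$'' is never introduced; it is of course the generic point of $F$, and the intended observation is that $\mc{J}_z = \mc{I}_{F,w}\mc{O}_{X,z} = \mf{m}_w\mc{O}_{X,z}$ while all other primary components of $\mc{J}$ blow up to the unit ideal at $z$.
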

\begin{proof} Using \hyperref[lem:topdeg]{Lemma~\ref*{lem:topdeg}} and the fact that $v_f$ is upper semicontinuous, there is a nonempty open subset $U$ of $F$ with the following two properties: \begin{enumerate}
\item[1.] If $y\in U$ is a closed point, then $y$ has exactly $[E:F]_s$ preimages in $E$.
\item[2.] If $y\in U$ is a (not necessarily closed) point, then $v_f(x) = v_f(z)$ for all $z\in f^{-1}(y)\cap E$.
\end{enumerate} Suppose that $y\in U$, and let $x_1,\ldots, x_r$ be the preimages of $y$ lying in $E$. Set $W = \ol{\{y\}}$ and $V_i = \ol{\{x_i\}}$ for each $i$. Again by \hyperref[lem:topdeg]{Lemma~\ref*{lem:topdeg}}, there is some closed point $w\in W\cap U$ such that $f^{-1}(w)\cap E\subseteq V_1\cup\cdots\cup V_r$ and moreover that $w$ has exactly $[V_i:W]_s$ preimages in $V_i$ for each $i$. But then \begin{align*}
\sum_{i=1}^r m_f(x_i) & = \sum_{i=1}^r v_f(x_i)[V_i:W]_s = v_f(z)\sum_{i=1}^r[V_i:W]_s = v_f(z)\times \#f^{-1}(w)\cap E\\
& = v_f(z)[E:F]_s = m_f(z).
\end{align*} This completes the proof.
\end{proof}

\section{Measures on classical varieties}

In order to state and prove an equidistribution theorem for classical varieties, we need to have a language of measures and weak convergence of measures on varieties. Such a language is developed in detail in \cite{me}. In this section we will review the relevant definitions and results, as well as define a pull-pack operation for measures under certain morphisms. The setup for this section is the same as in the previous, namely, we let $f\colon X\to Y$ be a finite surjective flat morphism between irreducible varieties over an algebraically closed field $k$. It is absolutely essential that $X$ and $Y$ be viewed as schemes, allowing for non-closed points; not all results in this section will be true otherwise.

We denote by $\mc{M}(X)$ and $\mc{M}(Y)$ the real vector space of all finite signed Borel measures on $X$ and $Y$ with respect to their Zariski topology. We let $SC(X)$ denote the real vector space of all \emph{semicontinuous functions} functions on $X$, that is, all functions $g\colon X\to \R$ of the form $g = h_1 - h_2$, where $h_i\colon X\to \R$ is a bounded upper semicontinuous function. Similarly we let $SC(Y)$ denote the space of semicontinuous functions on $Y$. We equip both $SC(X)$ and $SC(Y)$ with the supremum norm, making them into normed linear spaces. The following structure theorem is proved in \cite{me}.

\begin{thm}\label{thm:measures} We have the following characterization of measures on $X$. \begin{enumerate}
\item[$1.$] Any measure $\mu\in \mc{M}(X)$ can be written uniquely as an absolutely convergent sum $\mu = \sum_{x\in X} c_x\delta_x$, where $c_x\in \R$ for each $x$, and $\delta_x$ denotes the Dirac probability measure at $x$.
\item[$2.$] Integration induces a duality $\mc{M}(X)\cong SC(X)^*$, analogous to the duality between Radon measures and continuous functions on a compact Hausdorff space.
\end{enumerate} The isomorphism $\mc{M}(X)\cong SC(X)^*$ allows one to pull back both the weak topology (i.e., the topology of pointwise convergence) and the strong topology (i.e., the topology of norm convergence) from $SC(X)^*$ to $\mc{M}(X)$. A sequence $\mu_n\in \mc{M}(X)$ converges in the weak topology to a measure $\mu\in \mc{M}(X)$ if and only if $\mu_n(E)\to \mu(E)$ for each closed set $E\subseteq X$. The collection of Borel probability measures on $X$ is both compact and sequentially compact in the weak topology.
\end{thm}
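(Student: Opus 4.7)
The overall strategy exploits the Noetherian structure of the Zariski topology on $X$, which rigidifies Borel measures to a degree that has no analogue in the Hausdorff setting. Throughout I would first reduce to the positive case via the Jordan decomposition, valid because $\mu$ is finite.

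For part (1), the key subtlety is that a non-closed point $x \in X$ may have $\{x\}$ not Borel---for instance, the generic point of $\mathbb{A}^1_k$ when $k$ is uncountable---so one cannot simply put $c_x = \mu(\{x\})$. Instead I would define the coefficients by decreasing induction on $\dim \overline{\{x\}}$ using the M\"obius-type formula
\[
c_x \;=\; \mu\bigl(\overline{\{x\}}\bigr) \;-\; \sup\bigl\{\mu(F) : F \subsetneq \overline{\{x\}} \text{ closed in } X\bigr\},
\]
where the supremum is finite by monotonicity of $\mu$. A total-mass bound shows that $\{x : c_x > \varepsilon\}$ is finite for every $\varepsilon > 0$, so only countably many $c_x$ are nonzero and the sum $\sum_x c_x\delta_x$ converges absolutely in $\mc{M}(X)$ (note that $\delta_x$ is a perfectly well-defined Borel measure even when $\{x\}$ is not Borel). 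To show this sum equals $\mu$, one checks equality on every closed set $E$ by Noetherian induction on $\dim E$: decompose $E$ into its finitely many irreducible components, apply inclusion-exclusion, and unwind the defining formula for the relevant $c_{\eta_{E_i}}$; since closed sets form a $\pi$-system generating the Borel $\sigma$-algebra and both measures are finite, this forces equality on the whole $\sigma$-algebra. Uniqueness follows because the formula for $c_x$ is forced by the resulting identities on closed sets.

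For part (2), the pairing $(\mu,g)\mapsto \int g\,d\mu$ is well-defined because bounded upper semicontinuous functions are Borel measurable. Injectivity of $\mu \mapsto (g \mapsto \int g \, d\mu)$ follows from part (1), since each $\mathbf{1}_E$ with $E$ closed lies in $SC(X)$ and integration against these indicators recovers all coefficients. For surjectivity, given $\Lambda \in SC(X)^*$, I would define $\mu(E) = \Lambda(\mathbf{1}_E)$ on closed sets, verify finite additivity via $\mathbf{1}_{E \cup F} + \mathbf{1}_{E \cap F} = \mathbf{1}_E + \mathbf{1}_F$ and linearity, and extend by Carath\'eodory; Noetherianity is essential here, because descending chains of closed sets stabilize, reducing $\sigma$-additivity to finite additivity. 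The weak-convergence characterization follows in one direction from $\mathbf{1}_E \in SC(X)$ and in the other from the fact that any bounded u.s.c.\ function admits a uniform layer-cake approximation by finite positive linear combinations of indicators of closed sets. Weak-$*$ compactness of the probability simplex is Banach--Alaoglu, and sequential compactness---which needs care since $SC(X)$ is not separable in general---follows by a diagonal argument: given $\{\mu_n\}$, the union of their countable atomic supports from part (1) is itself countable, a diagonal extraction produces convergent coefficient sequences on that set, and the residual mass (which by Noetherian quasi-compactness cannot escape $X$) is placed on generic points of irreducible closed subvarieties using the specialization structure, yielding a probability measure limit.

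The main obstacle is really part (1): one must define coefficients $c_x$ at points where $\mu(\{x\})$ itself is meaningless, and then verify that the resulting discrete measure matches $\mu$ on \emph{every} Borel set, not merely on the closed sets used in its construction. Working exclusively with closed sets and exploiting the descending chain condition is what makes this feasible, but the combinatorics of the inclusion-exclusion step in the Noetherian induction---and confirming that the formula produces nonnegative $c_x$ in the positive case---require careful bookkeeping.
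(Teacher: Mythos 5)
The paper does not prove this statement; it cites it from the author's companion reference \texttt{[me]}, so there is no internal proof against which to compare. Judging the proposal on its own merits: parts (1) and the duality in (2) look essentially sound. Your M\"obius-type formula
\[
c_x \;=\; \mu\bigl(\ol{\{x\}}\bigr) \;-\; \sup\bigl\{\mu(F) : F \subsetneq \ol{\{x\}} \text{ closed}\bigr\}
\]
is the right device, the Noetherian induction verifying $\mu(E)=\sum_{x\in E}c_x$ goes through for positive $\mu$ (the supremum is attained as a limit along finite unions $V_{x_1}\cup\cdots\cup V_{x_N}\subsetneq\ol{\{x\}}$, using irreducibility), and the $\pi$-system argument then gives equality on all Borel sets. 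For surjectivity in (2), note that ``Noetherianity reduces $\sigma$-additivity to finite additivity'' needs to be the precise statement that any countable disjoint cover of a constructible set by constructibles is essentially finite (proved by Noetherian induction using the generic point of each component), and you also need a positive/negative decomposition of a general $\Lambda\in SC(X)^*$ before invoking Carath\'eodory; this works because $SC(X)$ is a vector lattice, but it should be said.

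The genuine gap is in sequential compactness. Diagonalizing over the countable set $S = \bigcup_n \operatorname{supp}(\mu_n)$ gives convergence of the atomic coefficients $c_x^{(n)}\to c_x$, but this does \emph{not} control $\mu_n(E)$ for a general closed set $E$: when $E\cap S$ is infinite and the generic point $\eta_E$ lies outside $S$, the quantity $\mu_n(E)=\sum_{x\in E\cap S}c_x^{(n)}$ is an infinite sum whose limit is not determined by pointwise convergence of the $c_x^{(n)}$. Fatou gives only $\liminf_n\mu_n(E)\ge\sum_{x\in E\cap S}c_x$, and the ``residual mass'' $R=1-\sum_x c_x$ can oscillate between uncountably many irreducible closed sets $E$ in a way that no diagonal over the countable set $S$ can tame \emph{a priori}. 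The phrase ``placed on generic points of irreducible closed subvarieties using the specialization structure'' asserts the conclusion rather than proving it; one must actually show that after a suitable extraction, $\lim_n\mu_n(E)$ exists simultaneously for all closed $E$, and that the resulting set function is a measure. What makes this true is a nontrivial countability statement --- for a countable $S\subseteq X$ the family of traces $\{E\cap S : E\subseteq X\text{ closed}\}$ is itself countable, or equivalently the restriction of $SC(X)$ to $S$ is separable --- after which Banach--Alaoglu on the separable quotient produces the limit functional and part (2) converts it back into a measure. As written, the proposal skips exactly this step, which is the real content of sequential compactness here.
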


There is, of course, an analogous theorem for $Y$. We now use the results of the previous section to define a pull-back operator $f^*\colon \mc{M}(Y)\to \mc{M}(X)$. 

\begin{prop}\label{prop:pullback} There is a unique linear operator $f^*\colon \mc{M}(Y)\to \mc{M}(X)$ which satisfies the following two conditions: \begin{enumerate}
\item[$1.$] $f^*$ is continuous in both the weak and strong topologies. 
\item[$2.$] If $y\in Y$, then $f^*\delta_y = \sum_{f(x) = y} m_f(x)\delta_x$.
\end{enumerate} For any measure $\mu\in \mc{M}(Y)$, one has $f_*f^*\mu = [X:_fY]\mu$, where $f_*$ denotes the ordinary push-forward operator on measures. If $\mu$ is positive and has total mass $R$, then $f^*\mu$ is again positive, and has total mass $[X:_f Y]R$.
\end{prop}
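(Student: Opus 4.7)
The plan is to define $f^*$ explicitly using the series representation from \hyperref[thm:measures]{Theorem~\ref*{thm:measures}(1)}. Given $\mu = \sum_y c_y\delta_y \in \mc{M}(Y)$, I set
\[
f^*\mu \;:=\; \sum_y c_y\sum_{f(x) = y} m_f(x)\,\delta_x \;=\; \sum_{x\in X} c_{f(x)}\,m_f(x)\,\delta_x.
\]
Absolute convergence of this series is immediate from \hyperref[thm:preimage_count]{Theorem~\ref*{thm:preimage_count}}:
\[
\sum_x |c_{f(x)}|\,m_f(x) \;=\; \sum_y |c_y|\sum_{f(x)=y} m_f(x) \;=\; [X:_f Y]\sum_y |c_y| \;<\; \infty.
\]
The same estimate yields $\|f^*\mu\| \leq [X:_f Y]\|\mu\|$ in total variation, which proves strong continuity. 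Uniqueness is then automatic: condition (2) pins down any candidate operator on Dirac masses, and these have strongly dense $\R$-span in $\mc{M}(Y)$ by \hyperref[thm:measures]{Theorem~\ref*{thm:measures}(1)}.

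The main obstacle is verifying weak continuity. By the characterization of weak convergence in \hyperref[thm:measures]{Theorem~\ref*{thm:measures}}, it suffices to show that for every closed subset $D\subseteq X$, the fibre sum
\[
\phi_D\colon Y\to \R,\qquad \phi_D(y) \;=\; \sum_{x\in f^{-1}(y)\cap D} m_f(x)
\]
lies in $SC(Y)$, for then $(f^*\mu)(D) = \int_Y \phi_D\,d\mu$ depends weakly continuously on $\mu$. I would prove this by induction on $\dim D$. The base case $\dim D = 0$ is immediate, as $\phi_D$ is then a finite nonnegative combination of indicators of closed points of $Y$. For the inductive step, the inclusion-exclusion identity $\mathbf{1}_D = 1 - \prod_i(1 - \mathbf{1}_{E_i})$ over the irreducible components $\{E_i\}$ of $D$ expresses $\phi_D$ as a signed integer combination of the functions $\phi_{E_S}$ with $E_S = \bigcap_{i\in S} E_i$; intersections with $|S|\geq 2$ have strictly smaller dimension than $D$ and are covered by the inductive hypothesis, so it suffices to treat the irreducible case $D = E$. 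Let $z$ be the generic point of $E$; \hyperref[prop:nice_neighborhood]{Proposition~\ref*{prop:nice_neighborhood}} furnishes a nonempty Zariski-open $U\subseteq f(E)$ on which $\phi_E\equiv m_f(z)$, and with $Z := f(E)\setminus U$ and $E' := f^{-1}(Z)\cap E$, a direct case-by-case verification gives
\[
\phi_E \;=\; m_f(z)\,\mathbf{1}_{f(E)} \;+\; \phi_{E'} \;-\; m_f(z)\,\mathbf{1}_Z.
\]
Here $E'\subsetneq E$ is a proper closed subset of an irreducible variety, so $\dim E' < \dim E$ and $\phi_{E'}\in SC(Y)$ by the inductive hypothesis; the indicator terms are upper semicontinuous because $f(E)$ and $Z$ are closed in $Y$. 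Hence $\phi_E \in SC(Y)$, completing the induction.

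The remaining assertions are formal consequences of the explicit formula. The identity $f_*f^*\mu = [X:_f Y]\mu$ need only be verified on Dirac masses, where \hyperref[thm:preimage_count]{Theorem~\ref*{thm:preimage_count}} gives $f_*f^*\delta_y = \sum_{f(x)=y} m_f(x)\,\delta_y = [X:_f Y]\delta_y$, and it then extends by linearity and strong continuity. If $\mu$ is positive then every coefficient $c_{f(x)} m_f(x)$ in the explicit series is nonnegative, so $f^*\mu$ is positive; and removing the absolute values from the absolute-convergence computation evaluates $f^*\mu(X) = [X:_f Y]R$.
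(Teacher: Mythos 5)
Your proof is correct, and the core technical content is the same as the paper's: both proofs hinge on showing that the fibre-sum function $f_*\chi_E$ (your $\phi_E$) lies in $SC(Y)$ for each closed $E\subseteq X$, and both reduce the irreducible case to a proper closed subset of $E$ via \hyperref[prop:nice_neighborhood]{Proposition~\ref*{prop:nice_neighborhood}}, with the reducible case handled by inclusion--exclusion over components. Where you differ is in packaging. The paper never writes $f^*$ down explicitly on measures; instead it constructs $f_*\colon \ol{SC}(X)\to\ol{SC}(Y)$ first, shows it is bounded, and then \emph{defines} $f^*$ as the adjoint, from which weak and strong continuity are automatic. You define $f^*$ directly by the series formula and then verify continuity and the duality identity $(f^*\mu)(D) = \int_Y\phi_D\,d\mu$ by hand; this makes the formula for $f^*$ more concrete upfront at the cost of a bit more bookkeeping. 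Likewise, the paper proves $f_*\chi_E\in SC(Y)$ by Noetherian induction (minimal bad closed set, contradiction), whereas you use induction on dimension with a full inclusion--exclusion expansion; these are interchangeable, and the Noetherian version is slightly leaner because the reducible case is disposed of with a single two-term identity and the minimality hypothesis absorbs what your inductive hypothesis does. Both variants correctly use that $E' = f^{-1}(f(E)\smallsetminus U)\cap E$ is a \emph{proper} closed subset of the irreducible $E$, which is exactly what makes the descent terminate. One small stylistic point: the paper writes the residual term as $m_f(E)\chi_U$ with $U$ open, while you decompose it as $m_f(z)(\mathbf{1}_{f(E)} - \mathbf{1}_Z)$ with both sets closed; these agree, and yours is perhaps marginally cleaner given that $SC(Y)$ is generated by indicators of closed sets.
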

\begin{proof} First, assume that such an operator $f^*$ does exist. Let $\mu\in \mc{M}(Y)$, with $\mu = \sum c_y\delta_y$. Let $y_1,y_2,\ldots$ be an enumeration of the points $y\in Y$ for which $c_y\neq 0$; there must be a countable number, as otherwise the sum $\sum c_y\delta_y$ would not converge. Then the measures $\mu_N$ defined by $\mu_N = \sum_{i=1}^N c_{y_i}\delta_{y_i}$ converge strongly to $\mu$ as $N\to \infty$, so by (1) and (2) \[
f^*\mu = \lim_{N\to \infty} f^*\mu_N = \lim_{N\to \infty} \sum_{i=1}^N c_{y_i}f^*\delta_{y_i} = \lim_{N\to \infty}\sum_{i=1}^N \sum_{f(x) = y_i} c_{y_i}m_f(x)\delta_x = \sum_{y\in Y}\sum_{f(x) = y} c_ym_f(x)\delta_x.\] This  derivation shows that $f^*$ is uniquely determined. Moreover, combining this equality with \hyperref[thm:preimage_count]{Theorem~\ref*{thm:preimage_count}} yields the remaining statements in the proposition. It then only remains to show the existence of $f^*$. 

To prove existence, we will exploit the duality $\mc{M}\cong SC^*$ and define $f^*$ to be the adjoint of a certain linear operator $f_*\colon \ol{SC}(X)\to \ol{SC}(Y)$, where $\ol{SC}$ denotes the Banach space closure of $SC$, i.e., the space of all functions which are uniform limits of semicontinuous functions. The operator $f_*$ is given by \[
(f_*\varphi)(y) := \sum_{f(x) = y} m_f(x)\varphi(x).\] First we must check that $f_*$ actually maps $\ol{SC}(X)$ into $\ol{SC}(Y)$. Since the vector space span of all characteristic functions $\chi_E$ of closed sets $E\subseteq X$ is dense in $SC(X)$ by \cite[Lemma 3.4]{me}, it suffices to show that $f_*\chi_E\in SC(Y)$ for any closed set $E\subseteq X$. We will prove this by contradiction. Let $T$ denote the (nonempty) set of closed sets $E\subseteq X$ such that $f_*\chi_E\notin SC(Y)$. Since $X$ is Noetherian, we can find a minimal element $E\in T$. If $E$ is reducible, say $E = E_1\cup E_2$, then \[
f_*\chi_E = f_*\chi_{E_1} + f_*\chi_{E_2} - f_*\chi_{E_1\cap E_2}\] lies in $SC(Y)$ by the minimality of $E$, a contradiction. Therefore $E$ must be irreducible. Note that $f_*\chi_E$ is supported in $F = f(E)$. Furthermore, by \hyperref[prop:nice_neighborhood]{Proposition~\ref*{prop:nice_neighborhood}}, there is a nonempty open subset $U\subseteq F$ such that $f_*\chi_E\equiv m_f(E)$ on $U$. Let $V = F\smallsetminus U$ and $W = f^{-1}(V)\cap E$. One then has $f_*(\chi_E - \chi_W) = m_f(E)\chi_U\in SC(Y)$. By the minimality of $E$, one also has $f_*\chi_W\in SC(Y)$. Thus $f_*\chi_E = f_*\chi_W + m_f(E)\chi_U\in SC(Y)$, a contradiction. We conclude that $\chi_E\in SC(Y)$ for all closed sets $E\subseteq X$.

We have therefore given a well-defined linear map $f_*\colon \ol{SC}(X)\to \ol{SC}(Y)$. We must show that it is bounded. This follows easily from \hyperref[thm:preimage_count]{Theorem~\ref*{thm:preimage_count}}, since for all $y\in Y$ \[
|(f_*\varphi)(y)| = \left|{\sum}_{f(x) = y} m_f(x)\varphi(x)\right|\leq \|\varphi\|{\sum}_{f(x) = y} m_f(x) = \|\varphi\| [X:_f Y].\] It is immediate that the adjoint $f^*\colon \mc{M}(Y)\to \mc{M}(X)$ of $f_*$ is weakly and strongly continuous. It remains to show $f^*$ satisfies condition (2). Let $y\in Y$ and let $E\subseteq X$ be closed. Then \[
(f^*\delta_y)(E) = \int f_*\chi_E\,d\delta_y = (f_*\chi_E)(y) = \sum_{x\in f^{-1}(y)\cap E} m_f(x) = \sum_{f(x) = y} m_f(x)\delta_x(E).\] Therefore $f^*\delta_y$ agrees with $\sum_{f(x) = y} m_f(x)\delta_x$ on closed sets. By \cite[Lemma 2.7]{me}, this is enough to conclude that $f^*\delta_y = \sum_{f(x) = y} m_f(x)\delta_x$.
\end{proof}

\section{Detecting total invariance}

The goal of this section is to show how the generic multiplicity function $v_f$ defined in \S2 can be used to detect totally invariant behavior in certain classes of dynamical systems. In the complex setting this has been done in multiple ways (see, for instance, \cite{MR1863737} and \cite{MR2513537}). In this section we will generalize the approach of \cite{MR2513537} to dynamical systems over arbitrary algebraically closed fields $k$. The dynamical systems we consider here are so-called \emph{polarized dynamical systems}.

\begin{Def} Let $X$ be an irreducible projective variety over $k$, and let $f\colon X\to X$ be an endomorphism of $X$. A \emph{polarization} of $f$ is an ample line bundle $L$ on $X$ such that $f^*L\cong L^d$ for some integer $d\geq 1$. If a polarization $L$ of $f$ is specified, we will say that $f$ is a \emph{polarized} dynamical system, and write $f\colon (X,L)\to (X,L)$ to signify this. The integer $d$ will be called the \emph{algebraic degree} of $f$. Not every $f$ admits a polarization.
\end{Def}

The reason for only considering polarized dynamical systems is that one can, by the following theorem of Fakhruddin \cite{MR1995861}, always embed such a system into projective space, making available certain tools we would not have otherwise. Specifically, the polarization assumption will allow us to make certain intersection theory arguments in \hyperref[prop:degcount]{Propositions~\ref*{prop:degcount}} and \ref{prop:bounddeg} below.

\begin{thm}[Fakhruddin] \label{fak}Let $f\colon (X,L)\to (X,L)$ be a polarized dynamical system of algebraic degree $d$. Then there is an embedding $X\subseteq \pr^r_k$ and a morphism $\Phi\colon \pr^r_k\to \pr^r_k$ with $\Phi^*\mc{O}(1) = \mc{O}(d)$ such that $\Phi(X) = X$ and $\Phi|_X = f$.
\end{thm}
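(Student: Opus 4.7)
The plan is to exploit the polarization $f^*L\cong L^d$ to produce $\Phi$ from sections of an appropriate power of $\mc{O}(1)$. After replacing $L$ by a sufficiently high power (this does not change $d$, since if $f^*L\cong L^d$ then $f^*L^n\cong (L^n)^d$), I may assume $L$ itself is very ample, that the restriction map $H^0(\pr^r_k, \mc{O}(d))\to H^0(X, L^d)$ is surjective, and that the ideal sheaf $\mc{I}_X(d)\subseteq \mc{O}_{\pr^r_k}(d)$ is globally generated. The first condition follows from Serre vanishing applied to the short exact sequence $0\to \mc{I}_X(d)\to \mc{O}_{\pr^r_k}(d)\to \mc{O}_X(d)\to 0$, which gives surjectivity as soon as $H^1(\pr^r_k, \mc{I}_X(d)) = 0$; the second is a standard consequence of ampleness.

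Embed $X\hookrightarrow \pr^r_k := \pr(H^0(X,L)^\vee)$, so that $\mc{O}_{\pr^r_k}(1)|_X\cong L$, and let $x_0,\ldots,x_r\in H^0(X,L)$ be the pullbacks of the standard coordinate sections. The pullbacks $f^*x_0,\ldots,f^*x_r$ then lie in $H^0(X,L^d)$ and have no common zero on $X$, since $f$ is a morphism. Using the surjectivity of restriction, I would lift each $f^*x_i$ to some $F_i\in H^0(\pr^r_k, \mc{O}(d))$. These $r+1$ sections define a rational map $\Phi\colon \pr^r_k\dashrightarrow \pr^r_k$ with $\Phi^*\mc{O}(1)\cong \mc{O}(d)$, and by construction $\Phi$ restricts to $f$ along $X$, in particular $\Phi(X) = X$.

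The main obstacle, and the only step requiring care, is to upgrade $\Phi$ from a rational map to a genuine morphism by ensuring the $F_i$ have no common zero on all of $\pr^r_k$, not merely on $X$. The key observation is that the kernel of $H^0(\pr^r_k, \mc{O}(d))\to H^0(X, L^d)$ is precisely $H^0(\pr^r_k, \mc{I}_X(d))$, so I am free to replace each $F_i$ by $F_i + G_i$ for any $G_i\in H^0(\pr^r_k, \mc{I}_X(d))$ without disturbing the condition $\Phi|_X = f$. Consider the linear subsystem $V\subseteq H^0(\pr^r_k, \mc{O}(d))$ spanned by $F_0,\ldots,F_r$ together with $H^0(\pr^r_k,\mc{I}_X(d))$: it has no base point on $X$ (because $F_0,\ldots,F_r$ restrict to the base-point-free tuple $f^*x_i$) and no base point off $X$ (because $\mc{I}_X(d)$ is globally generated there), so $V$ is base-point-free.

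Finally, I would invoke a standard genericity argument: for a base-point-free linear system on a projective variety of dimension $r$, a generic choice of $r+1$ sections has no common zero. Applying this to translates of the form $F_i + G_i$ with $G_i\in H^0(\pr^r_k, \mc{I}_X(d))$ chosen generically, I obtain modified sections that are still lifts of the $f^*x_i$ and now define a morphism $\Phi\colon \pr^r_k\to \pr^r_k$ with all the required properties. I expect this last genericity/base-point-elimination step to be the only place where some care is needed; the lifting step is essentially formal cohomology.
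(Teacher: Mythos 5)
Your approach is sound and is essentially the one Fakhruddin himself uses in \cite{MR1995861} (the paper states the result without proof, so there is no internal argument to compare against): re-embed by a high power of $L$ so that the embedding becomes projectively well-behaved, lift the sections $f^*x_i$ to degree-$d$ forms, and perturb the lifts by elements of $H^0(\pr^r_k,\mc{I}_X(d))$ to kill the base locus off $X$. Two places, however, need more care than your phrasing admits. First, global generation of $\mc{I}_X(d)$ is \emph{not} a consequence of ampleness alone: $d$ is a \emph{fixed} twist, so Serre's theorem on $\pr^r_k$ does not apply directly, and the re-embedding by $|L^n|$ changes the ambient space as $n$ grows. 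What you actually need is the Mumford-type asymptotic statement that, for $n \gg 0$, the homogeneous ideal of $X$ in the embedding by $|L^n|$ is generated in degrees at most $d$ (indeed by quadrics), together with normal generation of $L^n$, which is what makes $H^0(\pr^r_k, \mc{O}(d)) \to H^0(X,L^d)$ surjective. These are classical but are genuine theorems, not bare Serre vanishing; and they force $d \geq 2$, so the case $d = 1$ (where $\mc{I}_X(1)$ is typically not globally generated) must be treated separately, though it is immediate since $f$ then induces a linear automorphism of $\pr(H^0(X,L)^\vee)$.

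Second, the genericity step as written conflates a generic $(r+1)$-tuple in $V^{r+1}$ with a tuple drawn from the much smaller affine slice $\{(F_0+G_0,\ldots,F_r+G_r) : G_i \in H^0(\mc{I}_X(d))\}$. The conclusion is still correct, but the standard fact you quote does not apply verbatim and the argument has to be adapted: for $p \notin X$, global generation of $\mc{I}_X(d)$ makes each equation $F_i(p) + G_i(p) = 0$ cut out a hyperplane in $H^0(\mc{I}_X(d))$, while for $p \in X$ the $G_i$ vanish and the $F_i$ already have no common zero, so the incidence variety over $\pr^r_k \smallsetminus X$ has dimension $r + (r+1)(\dim H^0(\mc{I}_X(d)) - 1)$, which is strictly less than $(r+1)\dim H^0(\mc{I}_X(d))$ and hence cannot dominate. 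This dimension count is worth making explicit, since it is precisely the step that removes the remaining base points while preserving $\Phi|_X = f$.
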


For an overview of ample line bundles and intersection theory, we refer to \cite[Chapter VII]{MR1288998}. Given a very ample line bundle $L$ on $X$ and an irreducible dimension $q$ subvariety $E\subseteq X$, the \emph{degree} of $E$ with respect to $L$ is the intersection $\deg_LE :=(E\cdot L\cdot\cdots \cdot L)$, where here there are $q$ factors of $L$. If $s_1,\ldots, s_q$ are general enough divisors representing $L$, then $\deg_LE$ is exactly the number of points in the intersection $E\cap \mathrm{Div}(s_1)\cap\cdots\cap \mathrm{Div}(s_q)$, counted with multiplicity.

\begin{prop}\label{prop:degcount} Suppose $f\colon (X,L)\to (X,L)$ is a polarized dynamical system of algebraic degree $d$. Let $E\subseteq X$ be an irreducible closed subvariety of dimension $q$ such that $f^n(E) = E$ for some $n\geq 1$. Then $[E:_{f^n} E] = d^{nq}$. In particular, one has  $[X:_f X] = d^{\dim X}$.
\end{prop}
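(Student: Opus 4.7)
The plan is to use intersection theory on $E$, reducing the problem to a standard degree computation via the projection formula for the restricted polarized self-map.

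First, I would replace $f$ by $g := f^n$, which is itself a polarized endomorphism with $g^*L \cong L^{d^n}$. Since $g(E) = E$ by hypothesis, the restriction $g_E := g|_E\colon E \to E$ is a self-map of $E$. I would verify that $g_E$ is finite surjective: surjectivity is given, and finiteness follows from the observation that a polarized endomorphism cannot contract any curve --- if $C \subseteq E$ were collapsed by $g_E$, then by the projection formula $0 = g_E^*(L|_E) \cdot C = d^n\,(L|_E \cdot C) > 0$, contradicting the ampleness of $L|_E$. Hence $g_E$ has finite fibers and, being proper, is finite.

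Next, I would restrict the polarization identity to $E$ to obtain $g_E^*(L|_E) \cong (L|_E)^{d^n}$, so that $L|_E$ polarizes $g_E$ with algebraic degree $d^n$. Now I compute the top self-intersection on the $q$-dimensional variety $E$ in two ways: (a) by the projection formula for the finite surjective morphism $g_E$, one has $(g_E^*(L|_E))^q = [E :_{g_E} E]\cdot (L|_E)^q$; (b) substituting the polarization identity, $(g_E^*(L|_E))^q = ((L|_E)^{d^n})^q = d^{nq}\cdot(L|_E)^q$. Since $L$ is ample on $X$, the restriction $L|_E$ is ample on the closed subvariety $E$, so $(L|_E)^q > 0$. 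Equating the two expressions yields $[E :_{g_E} E] = d^{nq}$, which is exactly $[E :_{f^n} E] = d^{nq}$. The final assertion $[X :_f X] = d^{\dim X}$ is the case $n = 1$, $E = X$.

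There is no serious obstacle here; the only slight technicality is verifying that $g_E$ is finite surjective so that the projection formula applies cleanly, and citing that the restriction of an ample line bundle to a closed subvariety remains ample. All remaining steps are routine manipulations of intersection numbers.
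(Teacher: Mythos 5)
Your argument is correct and is essentially the paper's proof: both apply the projection formula to the finite self-map of $E$ and use the polarization identity $f^{n*}L\cong L^{d^n}$ to compute the top self-intersection in two ways, then divide by the positive number $(L|_E)^q$. The only difference is that you additionally spell out why $f^n|_E$ is finite via the non-contraction argument, a step the paper leaves implicit.
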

\begin{proof} The projection formula gives that \[
[E:_{f^n} E]\deg_LE = \deg_{f^{n*}L}E = \deg_{L^{d^n}}E = d^{nq}\deg_{L}E.\] Thus $[E:_{f^n}E] = d^{nq}$.  
\end{proof}

\begin{prop}\label{prop:bounddeg} Suppose $f\colon (X,L)\to (X,L)$ is a polarized dynamical system of algebraic degree $d$. Let $W\subseteq X$ be an irreducible subvariety of dimension $q$, and let $F$ be an irreducible subvariety of $f^n(W)$. Let $E_1,\ldots, E_m$ be the components of $f^{-n}(F)$ contained in $W$. Then there is a $C>0$ independent of $n$ and $F$ such that  \[
\sum_{i=1}^m [E_i:_{f^n} F]_s \leq C d^{nq}.\]
\end{prop}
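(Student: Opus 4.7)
The plan is to recast the sum as a count of set-theoretic preimages of a general point of $F$ lying in $W$, and then bound that count via a Bezout-type intersection in projective space after invoking Fakhruddin's embedding.

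First I would verify that for a sufficiently general closed point $y \in F$,
\[
\sum_{i=1}^m [E_i:_{f^n} F]_s \;=\; \#\bigl(W \cap f^{-n}(y)\bigr).
\]
By \hyperref[lem:topdeg]{Lemma~\ref*{lem:topdeg}} applied to each finite surjective morphism $f^n|_{E_i}\colon E_i\to F$, a general closed $y$ has exactly $[E_i:_{f^n} F]_s$ preimages in $E_i$. Moreover, any component $E$ of $f^{-n}(F)$ not contained in $W$ meets $W$ in a proper closed subvariety of $E$, whose image under $f^n$ is a proper closed subset of $F$; so for $y$ outside a suitable proper closed subset of $F$, no preimage in $W$ arises from such an $E$.

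Next I would invoke \hyperref[fak]{Fakhruddin's theorem} to realize $f$ as the restriction of a finite morphism $\Phi\colon \mathbb{P}^r_k\to \mathbb{P}^r_k$ of degree $d$ with $\Phi^*\mathcal{O}(1)=\mathcal{O}(d)$. Choose $q$ hyperplanes $H_1,\ldots,H_q\subseteq \mathbb{P}^r_k$ through $y$, generic in the linear system of hyperplanes containing $y$, and put $V := W \cap \Phi^{-n}(H_1)\cap\cdots\cap \Phi^{-n}(H_q)$. Since $y\in H_i$ for each $i$, we have $W\cap f^{-n}(y)\subseteq V$. A Bertini-type induction shows that each cut reduces dimension by exactly one, so $V$ has pure dimension zero: at stage $i<q$, every top-dimensional component $V'$ of the intermediate variety is positive-dimensional, hence so is $\Phi^n(V')$ (as $\Phi^n$ is finite), and therefore $\Phi^n(V')$ cannot be contained in every hyperplane through $y$; a generic $H_{i+1}$ thus does not contain $\Phi^n(V')$, so by Krull's Hauptidealsatz $V'\cap \Phi^{-n}(H_{i+1})$ has pure codimension one in $V'$. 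Each divisor $\Phi^{-n}(H_i)$ lies in $|\mathcal{O}_{\mathbb{P}^r}(d^n)|$, so Bezout's theorem in $\mathbb{P}^r$ yields $\#V \leq (\deg W)\, d^{nq}$, where $\deg W$ denotes the degree of $W$ in the Fakhruddin embedding.

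Combining the two steps produces the desired inequality with $C := \deg W$, a constant depending only on $W$. The main point needing care is the Bertini-style induction through the fixed point $y$: since the linear system of hyperplanes through $y$ has base locus $\{y\}$ and the intermediate varieties are never contained in this base locus, a standard genericity argument applied in the Grassmannian of hyperplanes through $y$ produces the required $H_i$'s.
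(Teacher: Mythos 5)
Your proof is correct and follows essentially the same strategy as the paper: both reduce $\sum_i [E_i:_{f^n}F]_s$ to $\#\bigl(W\cap f^{-n}(y)\bigr)$ for a general closed point $y\in F$ via Lemma~\ref{lem:topdeg}, and then bound that count by a Bezout/degree argument exploiting $f^{n*}L\cong L^{d^n}$. The only cosmetic difference is that you route through Fakhruddin's embedding and intersect with hyperplanes in $\mathbb{P}^r$, whereas the paper replaces $L$ by a very ample power and intersects with divisors in $|L|$ directly on $X$; the intersection-theoretic content is the same, giving $C=\deg W$ with respect to the chosen polarization.
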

\begin{proof} Replacing $L$ by a power $L^s$, we may assume with no loss of generality that $L$ is very ample. We first prove the inequality in the case where $F = x$ is a closed point of $f^n(W)$. Let $s_1,\ldots, s_q$ be sections of $L$ such that $f^n(W)\cap \mathrm{Div}(s_1)\cap\cdots\cap \mathrm{Div}(s_q)$ is finite and contains $x$. Then one has \[
\#f^{-n}(x)\cap W\leq \#W\cap f^{n*}\mathrm{Div}(s_1)\cap\cdots\cap f^{n*}\mathrm{Div}(s_q)\leq \deg_{f^{n*}L}W = d^{nq}\deg_LW.\] We may therefore take $C = \deg_LW$. To prove the general case, we use \hyperref[lem:topddeg]{Lemma~\ref*{lem:topdeg}} to find a nonempty open subset $U$ of $F$ with the following property: if $x$ is a closed point of $U$, then every element of $f^{-n}(x)\cap W$ lies in exactly one $E_i$, and moreover $\#f^{-n}(x)\cap E_i = [E_i:_{f^n} F]_s$. But then if $x\in U$ is a closed point, $\sum_i [E_i:_{f^n} F]_s = \#f^{-n}(x)\cap W\leq d^{nq}\deg_L W$ by what has been shown for closed points.
\end{proof}

Let us now fix a polarized dynamical system $f\colon (X,L)\to (X,L)$ of algebraic degree $d\geq 2$. We will assume also that $f$ is flat, so that we can apply all the results of \S2. Recall that a set $A\subseteq X$ is said to be \emph{totally invariant} if $f^{-1}(A) = A$. This condition is strictly stronger than ordinary invariance $f(A) = A$. We will say that an irreducible closed set $E\subseteq X$ is part of a  \emph{totally invariant cycle} for $f$ is $E$ is totally invariant for some iterate $f^n$ of $f$. In this case $F := E\cup f(E)\cup\cdots\cup f^{n-1}(E)$ is totally invariant for $f$, and $f$ permutes the irreducible components of $F$ cyclically. As we shall see shortly in \hyperref[thm:totinv]{Theorem~\ref*{thm:totinv}}, total invariance is something that in many cases can be detected by the generic multiplicity function $v_f\colon X\to \N$ defined in \S2. The following functions were first defined and studied by Dinh in the complex setting, see \cite{MR2513537}.

\begin{Def} For each point $y\in X$ and each $n\geq 1$, define \[
v_{-n}(y) := \max_{f^n(x) = y} v_{f^n}(x)\,\,\,\,\,\mbox{ and }\,\,\,\,\, v_-(y) = \lim_{n\to \infty} [v_{-n}(y)]^{1/n}.\] The function $v_-\colon X\to \N$ will be called the \emph{reverse asymptotic multiplicity} function for $f$. It will be convenient to sometimes write $v_-(E)$ in place of $v_-(x)$ when $E = \ol{\{x\}}$. The following theorem shows that $v_-$ is indeed well-defined.
\end{Def}

\begin{thm}[\cite{MR2513537}, see also \cite{me}]\label{thm:usc} For each $y\in X$, the limit $v_-(y)$ exists. Moreover, the reverse asymptotic multiplicity function $v_-\colon X\to \R$ is Zariski upper semicontinuous.
\end{thm}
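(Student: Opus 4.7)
The plan has two parts: existence of the pointwise limit $v_-(y)$ for each $y$, and Zariski upper semicontinuity of the resulting function.

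First I would dispatch the preliminaries. Each $v_{-n}$ is Zariski upper semicontinuous: by \hyperref[LJT]{Theorem~\ref*{LJT}}, $\{x : v_{f^n}(x) \geq c\}$ is Zariski closed, and since $f^n$ is a finite (hence closed) morphism of projective varieties, its image $\{y : v_{-n}(y) \geq c\} = f^n(\{x : v_{f^n}(x) \geq c\})$ is Zariski closed as well. For boundedness of $v_{-n}^{1/n}$, combining \hyperref[prop:multiplicity_relation]{Propositions~\ref*{prop:multiplicity_relation}} and \hyperref[prop:degcount]{\ref*{prop:degcount}} with \hyperref[thm:preimage_count]{Theorem~\ref*{thm:preimage_count}} yields $v_{-n}(y) \leq \sum_{f^n(x)=y} m_{f^n}(x) = [X :_{f^n} X] = d^{n \dim X}$, so $v_{-n}(y)^{1/n} \leq d^{\dim X}$ uniformly in $y$ and $n$.

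Next I would establish a submultiplicativity inequality. By \hyperref[prop:multiplicativity]{Proposition~\ref*{prop:multiplicativity}}, for any $x \in f^{-(n+m)}(y)$ and $z := f^n(x) \in f^{-m}(y)$,
\[
v_{f^{n+m}}(x) \;=\; v_{f^n}(x)\, v_{f^m}(z) \;\leq\; v_{-n}(z)\, v_{-m}(y) \;\leq\; M_n\, v_{-m}(y),
\]
where $M_n := \sup_{z \in X} v_{-n}(z)$. Maximizing over $x$ gives $v_{-(n+m)}(y) \leq M_n\, v_{-m}(y)$, and in particular $M_{n+m} \leq M_n M_m$. Fekete's lemma then produces $\delta := \lim M_n^{1/n} = \inf_n M_n^{1/n}$, and hence $\limsup_n v_{-n}(y)^{1/n} \leq \delta$ for every $y$.

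The main obstacle is upgrading this $\limsup$ bound to genuine pointwise convergence and extracting the Zariski upper semicontinuity of the limit. Here I would exploit constructibility: $v_f$ takes only finitely many values, so $X$ admits a finite Zariski stratification on which $v_f$ is locally constant. Since $v_{f^n}(x) = \prod_{i=0}^{n-1} v_f(f^i(x))$ by \hyperref[prop:multiplicativity]{Proposition~\ref*{prop:multiplicativity}}, computing $v_{-n}(y)$ reduces to a max-product optimization over length-$n$ reverse orbits of $y$ threading this finite stratification --- a max-plus linear dynamical system on a finite graph. A max-plus Perron-Frobenius argument, or equivalently a direct analysis of the finitely many $f$-periodic irreducible subvarieties $E \subseteq X$ dynamically reachable from $\ol{\{y\}}$ under inverse iteration, shows that $\lim_n v_{-n}(y)^{1/n}$ exists and equals $\max_E v_{f^{p(E)}}(E)^{1/p(E)}$, where $p(E)$ denotes the period of $E$. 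Upper semicontinuity of $v_-$ is immediate from this description: if $y$ specializes to $y'$, then every periodic subvariety reachable from $\ol{\{y\}}$ is reachable from $\ol{\{y'\}}$, so $v_-(y') \geq v_-(y)$; hence each level set $\{v_- \geq c\}$ is stable under specialization and therefore Zariski closed. I expect the combinatorial bookkeeping in the Perron-Frobenius step --- in particular the precise characterization of which periodic subvarieties are \emph{reachable} from a given point and the verification that the maximum in the formula is actually attained in the limit --- to be the main technical obstacle.
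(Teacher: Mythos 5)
The paper does not prove this theorem; it is imported from \cite{MR2513537} (Dinh) and \cite{me}, and the only trace of the proof strategy visible in the present paper is the formula $v_-(E) = \max_F v_+(F)$ over periodic irreducible $F \supseteq E$, which is quoted and used in the proof of \hyperref[thm:totinv]{Theorem~\ref*{thm:totinv}}. Your proposal is in the right spirit: the preliminaries (Zariski upper semicontinuity of each $v_{-n}$ via finiteness of $f^n$, the bound $v_{-n}^{1/n} \leq d^{\dim X}$, submultiplicativity of $M_n = \sup_X v_{-n}$ plus Fekete) are all correct, and the target formula you write down --- $v_-(y) = \max v_{f^{p(E)}}(E)^{1/p(E)}$ over certain periodic $E$ --- is essentially the $\max v_+(F)$ formula the paper relies on. Your derivation of upper semicontinuity \emph{from} that formula is also sound once ``reachable'' is interpreted correctly: it must mean ``$\overline{\{y\}}$ is contained in some member $f^i(F)$ of the periodic cycle of $F$,'' which is monotone in the right direction under specialization.

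The genuine gap is exactly what you flag at the end, and it is not merely bookkeeping. The stratification of $X$ by level sets of $v_f$ does yield a finite directed graph whose edges record which strata $f$ can carry into which, and the max-plus spectral radius of the strongly connected components reachable from the stratum of $y$ gives an \emph{upper} bound on $\limsup_n v_{-n}(y)^{1/n}$. But this graph is only a coarse shadow of the actual dynamics: a cycle in the stratum graph need not be realized by an honest periodic irreducible subvariety of $X$, and without that realization the upper bound need not be sharp, nor need the $\limsup$ be attained. The substance of the cited proof is precisely the bridge between the combinatorial cycle and a genuine periodic subvariety $F \supseteq \overline{\{y\}}$: one direction ($v_-(y) \geq v_+(F)$ whenever $F$ is periodic and contains $\overline{\{y\}}$) follows by building a preimage chain inside $F$ and invoking upper semicontinuity of $v_f$ along it; the converse direction (that no reverse orbit can do asymptotically better than such an $F$) is the hard step and is left entirely unargued in your sketch. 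In addition, deducing Zariski-closedness of each superlevel set $\{v_- \geq c\}$ from the formula requires knowing that only finitely many periodic subvarieties have $v_+ \geq c$, a finiteness statement you use implicitly (``the finitely many $f$-periodic irreducible subvarieties\ldots'') but which is itself nontrivial and in the paper's logical development appears only as \hyperref[cor:finiteness]{Corollary~\ref*{cor:finiteness}}, \emph{downstream} of the present theorem.
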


In order to proceed any further, we will need to make one additional technical assumption about the morphism $f$ to rule out complications resulting from inseparable behavior that arise when working over fields $k$ of positive characteristic.

\begin{Ass}\label{ass:technical} We assume that whenever $E\subseteq X$ is an irreducible closed set which is periodic for $f$, say with period $n$, one has $[E:_{f^n} E]_i = 1$.
\end{Ass}

\begin{prop} If $\mathrm{char}(k) = 0$ or $\mathrm{char}(k) = p\nmid d$, then \hyperref[ass:technical]{Assumption~\ref*{ass:technical}} is automatically satisfied.
\end{prop}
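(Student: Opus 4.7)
The plan is to argue by comparing the total degree with the inseparable degree using the polarization. By Proposition~\ref{prop:degcount} (applied to the periodic component $E$ of dimension $q$), we have
\[
[E :_{f^n} E] = d^{nq}.
\]
Since degrees of field extensions factor as $[E :_{f^n} E] = [E :_{f^n} E]_s \cdot [E :_{f^n} E]_i$, the inseparable degree $[E :_{f^n} E]_i$ must divide $d^{nq}$.

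In the case $\mathrm{char}(k) = 0$, every finite field extension is separable, so $[E :_{f^n} E]_i = 1$ tautologically. In the case $\mathrm{char}(k) = p$, the inseparable degree of any finite extension of fields of characteristic $p$ is a power of $p$ (this is a standard fact: the purely inseparable part of a field extension is built from $p$-th root extractions). Hence $[E :_{f^n} E]_i$ is a power of $p$. Under the hypothesis $p \nmid d$, we have $p \nmid d^{nq}$, so the only power of $p$ dividing $d^{nq}$ is $p^0 = 1$. Therefore $[E :_{f^n} E]_i = 1$, as required.

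There is no real obstacle here; the only thing to be careful about is making sure one can legitimately invoke Proposition~\ref{prop:degcount} for the iterate $f^n$ restricted to $E$. This is fine because $f^n|_E : E \to E$ is itself a polarized dynamical system of algebraic degree $d^n$ with polarization $L|_E$ (ampleness is preserved under restriction to a closed subvariety, and $(f^n|_E)^*(L|_E) \cong (L|_E)^{d^n}$), and the proposition applies directly to this system with $q = \dim E$.
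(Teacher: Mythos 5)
Your proof is correct and takes essentially the same approach as the paper: apply Proposition~\ref{prop:degcount} to conclude $[E:_{f^n} E] = d^{nq}$, and then observe that since the inseparable degree is a power of $p$ and $p \nmid d$, it must equal $1$. The extra remark about $f^n|_E$ being polarized is unnecessary since Proposition~\ref{prop:degcount} is already stated for periodic subvarieties $E \subseteq X$, but it does no harm.
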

\begin{proof} The proposition is clear when $\mathrm{char}(k) = 0$, so assume $\mathrm{char}(k) =p>0$ and $p\nmid d$. By \hyperref[prop:degcount]{Proposition~\ref*{prop:degcount}}, we have $[E:_{f^n} E] = d^{n\dim(E)}$. Since $p\nmid d$, it follows that the field extension $k(E)/f^{n*}k(E)$ must be separable.
\end{proof}

\begin{thm}\label{thm:totinv} Let $f\colon (X,L)\to (X,L)$ be a polarized dynamical system of algebraic degree $d\geq 2$ which is flat and satisfies \hyperref[ass:technical]{Assumption~\ref*{ass:technical}}. Let $E$ be an irreducible closed subset of codimension $q$ in $X$. Then $v_-(E) \leq d^q$, with equality if and only if $E$ is part of a totally invariant cycle for $f$.
\end{thm}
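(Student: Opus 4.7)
The plan is to bound $v_-(E)$ from above by an intersection-theoretic calculation on the polarized embedding, and to analyze the equality case by extracting a periodic cycle via a Chow-compactness argument on the preimage components.

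For the upper bound, I apply Fakhruddin's theorem (Theorem~\ref{fak}) and replace $L$ by a high power to make it very ample. For any component $E'\subseteq f^{-n}(E)$ dominating $E$, flat pull-back of cycles combined with Proposition~\ref{prop:multiplicity_relation} and Assumption~\ref{ass:technical} identifies $v_{f^n}(E')$ with the cycle-theoretic multiplicity of $E'$ in $f^{n*}[E]$, so
\[
f^{n*}[E]=\sum_{E'\text{ dominant}} v_{f^n}(E')\,[E'].
\]
Applying $\deg_L$ with $f^{n*}L\cong L^{d^n}$ and the projection formula gives $\sum v_{f^n}(E')\deg_L(E')=d^{nq}\deg_L(E)$; since $\deg_L(E')\ge 1$ under a very ample polarization, each individual term obeys $v_{f^n}(E')\le d^{nq}\deg_L(E)$, and maxing over $E'$ and taking $n$th roots yields $v_-(E)\le d^q$. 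The easy half of the equality statement is immediate: if $f^{-m}(E)=E$, then $E$ is the unique component of $f^{-m}(E)$, the identity collapses to $v_{f^m}(E)=d^{mq}$, and multiplicativity (Proposition~\ref{prop:multiplicativity}) yields $v_-(E)\ge d^q$.

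For the hard converse, assume $v_-(E)=d^q$. For each $n$, choose a component $E_n\subseteq f^{-n}(E)$ attaining $v_{f^n}(E_n)=v_{-n}(E)$. The sharp intersection identity bounds $\deg_L(E_n)\le d^{nq}\deg_L(E)/v_{-n}(E)$, which is uniformly bounded in $n$ since $v_{-n}(E)^{1/n}\to d^q$. Hence $\{E_n\}$ lies in a bounded family of $\dim(E)$-dimensional subvarieties of $X$, parametrized by a finite-type Chow scheme; a specialization/pigeonhole argument then produces a subsequence $n_1<n_2<\cdots$ and a single subvariety $E'$ with $E_{n_k}=E'$ for all $k$. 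For any two indices, $f^{n_k}(E')=E=f^{n_j}(E')$ forces $f^{n_j-n_k}(E)=E$, so $E$ is periodic with some minimal period $m$ that divides every $n_j-n_k$.

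To promote periodicity to total invariance, I apply multiplicativity along the orbit: writing $n_k-n_1=j_km$, I obtain $v_{f^{n_k}}(E')=v_{f^{n_1}}(E')\cdot v_{f^m}(E)^{j_k}$, and taking $n_k$th roots with $v_{-n_k}(E)^{1/n_k}\to d^q$ and $j_k/n_k\to 1/m$ forces $v_{f^m}(E)=d^{mq}$ exactly. Since $f^m$ is flat, every component of $f^{-m}(E)$ has dimension $\dim E$ and hence maps \emph{onto} $E$, so the intersection identity reads $d^{mq}\deg_L(E)=\sum_{E''}v_{f^m}(E'')\deg_L(E'')$; the single summand for $E''=E$ already saturates the right-hand side, ruling out every other component and giving $f^{-m}(E)=E$. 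The main obstacle is this Chow-compactness step, which extracts a genuine periodic component from purely asymptotic multiplicity data; Assumption~\ref{ass:technical} is essential throughout so that $v_{f^n}$ coincides with the flat-pullback cycle multiplicity of $[E]$, since without it inseparable factors would corrupt both the sharp bound and the extraction of $m$.
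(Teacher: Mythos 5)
Your argument has two genuine gaps, both arising from bypassing the structural input the paper relies on. The claimed identity $f^{n*}[E]=\sum_{E'}v_{f^n}(E')[E']$ is false in positive characteristic: if $z$ and $y$ are the generic points of a component $E'\subseteq f^{-n}(E)$ and of $E$, the flat-pullback cycle multiplicity of $E'$ is $\mathrm{length}_{\mc{O}_{X,z}}(\mc{O}_{X,z}/\mf{m}_y\mc{O}_{X,z})$, whereas $v_{f^n}(E')=[E':_{f^n}E]_i\cdot\mathrm{length}_{\mc{O}_{X,z}}(\mc{O}_{X,z}/\mf{m}_y\mc{O}_{X,z})$, and the inseparable factor $[E':_{f^n}E]_i$ is controlled by Assumption~\ref{ass:technical} only for \emph{periodic} $E'$, not for arbitrary preimage components. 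Your degree identity therefore bounds $\sum_{E'}\bigl(v_{f^n}(E')/[E':_{f^n}E]_i\bigr)\deg_L(E')$, and the upper bound $v_{f^n}(E')\leq d^{nq}\deg_L(E)$ you need does not follow. Second, even granting a uniform bound on $\deg_L(E_n)$ (which itself needs $v_{-n}(E)\geq c\,d^{nq}$ with $c>0$, not merely $v_{-n}(E)^{1/n}\to d^q$), the Chow step extracts a \emph{constant} subsequence from the mere fact that $\{E_n\}$ lies in a bounded family --- but a bounded family is of finite type, not finite, and the $E_n$ can be pairwise distinct, so the pigeonhole argument fails over an arbitrary algebraically closed field where no analytic accumulation is available.

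The paper avoids both problems by invoking the identity $v_-(E)=\max v_+(F)$, the maximum taken over periodic irreducible closed $F\supseteq E$ (this is the result from \cite{me} that your proof omits). That reduces the whole statement to periodic subvarieties, where Assumption~\ref{ass:technical} applies directly: one gets $v_+(F)=\bigl(m_{f^n}(F)/[F:_{f^n}F]_s\bigr)^{1/n}\leq d^{\mathrm{codim}(F)}$ from $m_{f^n}(F)\leq d^{n\dim X}$ together with $[F:_{f^n}F]_s=[F:_{f^n}F]=d^{n\dim F}$, with equality exactly when $F$ is totally invariant under $f^n$, and the equality case of the theorem is then read off from $\mathrm{codim}(F)\leq q$ with no extraction step at all. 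Without that reduction you can neither separate $v_{f^n}$ from the uncontrolled inseparability of non-periodic preimage components nor produce a periodic cycle from purely asymptotic multiplicity data.
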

\begin{proof} Let $m := \dim(X)$. For any $n$-periodic irreducible closed set $F\subseteq X$, let \[
v_+(F) := v_{f^n}(F)^{1/n}.\] It is shown in \cite{me} that $v_-(E) = \max v_+(F)$, where the maximum is taken over all periodic irreducible closed subsets $F\subseteq X$ which contain $E$. Fix a periodic irreducible closed set $F$ containing $E$ such that $v_-(E) = v_+(F)$. Let $n$ be the period of $F$. By \hyperref[prop:multiplicity_relation]{Proposition~\ref*{prop:multiplicity_relation}}, \[
v_+(F) = v_{f^n}(F)^{1/n} = \left(\frac{m_{f^n}(F)}{[F:_{f^n} F]_s}\right)^{1/n} \leq \left(\frac{d^{nm}}{[F:_{f^n} F]_s}\right)^{1/n},\] with equality if and only if $m_{f^n}(F) = d^{nm}$, i.e., if and only if $F$ is totally invariant for $f^n$. By \hyperref[ass:technical]{Assumption~\ref*{ass:technical}}, we have $[F:_{f^n} F]_s = [F:_{f^n} F] = d^{n\dim F}$, so that $v_+(F) \leq d^{\mathrm{codim}(F)}$, with equality if and only if $F$ is part of a totally invariant cycle for $f$. Since $E\subseteq F$, we have that $q\geq \mathrm{codim}(F)$, with equality if and only if $E = F$. It follows that $v_-(E) = v_+(F) \leq d^q$, with equality if and only if $E = F$ is part of a totally invariant cycle for $f$.
\end{proof}

\begin{cor}\label{cor:finiteness} There are finitely many irreducible closed subsets $E\subseteq X$ that are part of a totally invariant cycle for $f$.
\end{cor}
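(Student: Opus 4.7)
The plan is to leverage Theorem~\ref{thm:totinv} together with the upper semicontinuity of $v_-$ (Theorem~\ref{thm:usc}), stratifying $X$ by the level sets of $v_-$.

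For each integer $q\geq 0$, set
\[
Z_q := \{x\in X : v_-(x) \geq d^q\}.
\]
Since $v_-$ is Zariski upper semicontinuous, each $Z_q$ is a Zariski closed subset of $X$. The first key observation is a codimension bound: if $x\in Z_q$ and $E_x := \overline{\{x\}}$, then by definition $v_-(E_x) = v_-(x) \geq d^q$, while Theorem~\ref{thm:totinv} gives $v_-(E_x) \leq d^{\,\mathrm{codim}(E_x)}$. Hence $\mathrm{codim}(E_x)\geq q$, so \emph{every} irreducible component of $Z_q$ has codimension at least $q$ in $X$.

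Now let $E\subseteq X$ be an irreducible closed subset of codimension exactly $q\geq 1$ which is part of a totally invariant cycle for $f$. By Theorem~\ref{thm:totinv}, $v_-(E) = d^q$, so the generic point $\eta_E$ lies in $Z_q$, i.e.\ $E\subseteq Z_q$. Since $\dim E = \dim X - q$ and every component of $Z_q$ has dimension at most $\dim X - q$, $E$ must in fact be an irreducible component of $Z_q$. As $Z_q$ has only finitely many irreducible components, this shows that there are only finitely many such $E$ of codimension exactly $q$.

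Summing over $q=1,\ldots,\dim X$ (and noting that the only codimension-zero irreducible closed subset is $X$ itself) yields the desired finiteness. The content of the argument is really all packaged into Theorem~\ref{thm:totinv} and Theorem~\ref{thm:usc}; the only step that requires a moment of thought is the codimension bound on components of $Z_q$, which is where the ``$\leq d^q$'' half of Theorem~\ref{thm:totinv} gets used in an essential way.
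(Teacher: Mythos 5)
Your proof is correct and takes essentially the same approach as the paper: both stratify by codimension $q$, observe that $\{v_-\geq d^q\}$ is Zariski closed by Theorem~\ref{thm:usc}, and use the bound $v_-(E)\leq d^{\mathrm{codim}(E)}$ from Theorem~\ref{thm:totinv} to identify the totally invariant irreducible closed sets of codimension $q$ with the (finitely many) codimension-$q$ components of that closed set. The only difference is that you spell out the codimension bound on components of $Z_q$, which the paper leaves implicit.
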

\begin{proof} It is enough to prove there are only finitely many irreducible closed sets $E\subseteq X$ of a fixed codimension $q$ that are part of totally invariant cycles for $f$. Indeed, by \hyperref[thm:totinv]{Theorem~\ref*{thm:totinv}}, the codimension $q$ irreducible closed sets which are part of a totally invariant cycle are precisely the codimension $q$ components of the closed set $\{v_-\geq d^q\}$.
\end{proof}

\begin{Def} The \emph{exceptional set} of $f$ is the set $\ms{E}\subseteq X$ which is the union of all totally invariant proper closed subsets $E\subsetneq X$. If $f$ satisfies \hyperref[ass:technical]{Assumption~\ref*{ass:technical}}, then by \hyperref[cor:finiteness]{Corollary~\ref*{cor:finiteness}} this union is finite, so that $\ms{E}$ is itself a totally invariant proper closed subset of $X$. It is thus the maximal proper totally invariant closed subset.
\end{Def}

It should be noted that \hyperref[ass:technical]{Assumption~\ref*{ass:technical}} cannot be removed from \hyperref[thm:totinv]{Theorem~\ref*{thm:totinv}}. Indeed, if we consider the Frobenius map $f\colon \pr^1_k\to \pr^1_k$ over $k = \ol{\mathbf{F}}_p$, given by $f(z) = z^p$, then one easily checks that \emph{every} point of $\pr^1_k$ is part of a totally invariant cycle for $f$. In particular, there is no maximal proper Zariski closed subset of $\pr^1_k$ which is totally invariant. We therefore see that the techniques developed in the complex setting for detecting total invariance can fail in characteristic $p$ in the presence of inseparable behavior.

In the complex setting, it was shown by Forn\ae ss-Sibony \cite{MR1369137} that generic morphisms $f\colon \pr^r_\C\to \pr^r_\C$ of algebraic degree $d\geq 2$ have $\ms{E} = \varnothing$. We devote the rest of this section to proving that this remains true over any algebraically closed field $k$. Such a statement is made precise as follows. First, recall that  endomorphisms $f\colon \pr^r_k\to \pr^r_k$ of algebraic degree $d$ are naturally parameterized by a certain irreducible affine variety $\mc{H}_d$ over $k$, see for instance \cite[Theorem 1.8]{Silverman:2012fk}. We will show that $\mc{H}_d$ contains a nonempty Zariski open subset of endomorphisms $f$ which have $\ms{E} = \varnothing$. The proof we give is identical in spirit to that of \cite[Theorem 1.3]{MR2468484}, even though the details differ in places.

\begin{prop} Let $v\colon \mc{H}_d\times \pr^r_k\to \R$ be the map, defined on closed points, that is given by $v(f,x) = v_f(x)$. Then $v$ is Zariski upper semicontinuous. 
\end{prop}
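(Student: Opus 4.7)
The plan is to adapt the Lejeune-Jalabert-Teissier construction of \hyperref[LJT]{Theorem~\ref*{LJT}} to the universal family over $\mc{H}_d$, producing a single coherent sheaf $\mc{F}$ on $Y := \mc{H}_d\times\pr^r_k$ whose fiber dimensions at closed points compute $v$. Upper semicontinuity of $v$ will then follow immediately from the Zariski upper semicontinuity of fiber dimensions for coherent sheaves.

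First I would introduce the universal endomorphism $F\colon Y\to Y$ over $\mc{H}_d$, given by $(f,x)\mapsto(f,f(x))$, whose restriction to each fiber $\{f\}\times\pr^r_k$ is the map $f\colon\pr^r_k\to\pr^r_k$ itself. I would then check that $F$ is finite: it factors as the closed immersion $Y\hookrightarrow Y\times_{\mc{H}_d}Y$ onto the graph of $F$, followed by the second projection $Y\times_{\mc{H}_d}Y\to Y$, which is proper as a base change of the proper morphism $Y\to\mc{H}_d$. Being proper and quasi-finite, $F$ is finite. Mimicking the construction in \hyperref[LJT]{Theorem~\ref*{LJT}}, I would set $Z := Y\times_F Y$ (with both structure maps equal to $F$), let $\pi\colon Z\to Y$ denote the first projection, and let $\mc{I}\subset\mc{O}_Z$ be the ideal sheaf of the diagonal. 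Since $\pi$ is a base change of the finite morphism $F$, it too is finite, so $\mc{F} := \pi_*(\mc{O}_Z/\mc{I}^N)$ is coherent on $Y$ for any $N\geq 1$.

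The heart of the argument is to show that for $N$ sufficiently large (one can take $N\geq d^r$, as in \hyperref[LJT]{Theorem~\ref*{LJT}} combined with \hyperref[prop:degcount]{Proposition~\ref*{prop:degcount}}) and every closed point $(f,x)\in Y$, the fiber dimension $\dim_{\kappa(f,x)}\mc{F}(f,x)$ equals $v_f(x)$. For this, I would restrict the entire construction to the fiber $Y_f := \{f\}\times\pr^r_k\subset Y$: one has $Z\times_{\mc{H}_d}\{f\} = \pr^r_k\times_f\pr^r_k$, the restriction of $\mc{I}$ is the diagonal ideal sheaf on this fiber, and because $\pi$ is finite, $\pi_*$ commutes with base change along $Y_f\hookrightarrow Y$. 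Consequently $\mc{F}|_{Y_f}$ coincides with the Lejeune-Jalabert-Teissier sheaf attached to $f$ alone, whose fiber at $x$ has dimension exactly $v_f(x)$ by \hyperref[LJT]{Theorem~\ref*{LJT}}.

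The main obstacle I anticipate is verifying the base-change compatibility used in the previous paragraph: one must check that both the formation of $\mc{O}_Z/\mc{I}^N$ and of $\pi_*$ commute with restriction to $Y_f$. Finiteness of $\pi$ makes the pushforward half routine (affine-locally, tensor product and finite-module operations commute), and the first half reduces to the local fact that the image of the diagonal ideal in $\mc{O}_{Z_f}$ generates the diagonal ideal of $Z_f$. Nothing deeper is required, but this is the one place where the family structure genuinely enters the argument.
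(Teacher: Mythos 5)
Your proof is essentially the same as the paper's: the scheme $Z = Y\times_F Y$ you build is literally the paper's $X = \{(f,x,y) : f(x)=f(y)\}\subseteq\mc{H}_d\times\pr^r_k\times\pr^r_k$ (the universal endomorphism $F$ just repackages the same data), and the coherent sheaf $\pi_*(\mc{O}_Z/\mc{I}^N)$ together with the base-change identification of its restriction to $\{f\}\times\pr^r_k$ with the Lejeune-Jalabert--Teissier sheaf of $f$ tracks the paper's argument step for step. Your observation that $\pi$ is finite because it is a base change of the finite morphism $F$ is a slightly cleaner justification of coherence than the paper's implicit appeal to properness of $\pi$, but the underlying construction and the reduction to \hyperref[LJT]{Theorem~\ref*{LJT}} are identical.
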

\begin{proof} Let $X\subseteq \mc{H}_d\times \pr^r_k\times\pr^r_k$ be the subvariety $X = \{(f,x,y) : f(x) = f(y)\}$, and let $\mc{I}$ denote the ideal sheaf of $\Delta = \{(f,x,y) : x = y\}\subseteq X$. We will denote by $\mc{F}$ the sheaf $\mc{O}_X/\mc{I}^N$, where $N$ is any integer $\geq d^r$. Let $\pi\colon X\to \mc{H}_d\times \pr^r_k$ denote the projection onto the first two coordinates. For any fixed morphism $f\in \mc{H}_d$, one obtains embeddings $i_f\colon \pr^r_k\times_f\pr^r_k\to X$ and $j_f\colon \pr^r_k\to \mc{H}_d\times \pr^r_k$, namely $i_f(x,y) = (f,x,y)$ and $j_f(x) = (f,x)$. Moreover, if $\eta\colon \pr^r_k\times_f\pr^r_k\to \pr^r_k$ is the projection onto the first coordinate, then $\pi\circ i_f = j_f\circ \eta$. We saw in the proof of \hyperref[LJT]{Theorem~\ref*{LJT}} that the fiber dimension of $\eta_*j_f^*\mc{F} = i_f^*\pi_*\mc{F}$ at a point $x\in \pr^r_k$ is exactly $v_f(x)$. On the other hand, this fiber dimension is equal to the fiber dimension of $\pi_*\mc{F}$ at $i_f(x)= (f,x)$. Thus the fiber dimension of $\pi_*\mc{F}$ at $(f,x)$ is exactly $v_f(x)$. Since $\pi_*\mc{F}$ is a coherent sheaf on $\mc{H}_d\times \pr^r_k$, its fiber dimensions are upper semicontinuous.
\end{proof}

\begin{cor}\label{open} For any $a\in \R$, the set of endomorphisms $f\in \mc{H}_d$ such that $v_f(x)<a$ for all $x\in \pr^r_k$ is Zariski open.
\end{cor}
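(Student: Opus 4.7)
The plan is to deduce this immediately from the preceding proposition by combining upper semicontinuity of $v$ with the properness of projective space. The whole argument is essentially just the standard fact: the projection of a Zariski closed set along a proper map is Zariski closed.

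First, I would invoke the previous proposition to obtain the Zariski closed subset
\[
Z_a := \{(f,x) \in \mc{H}_d \times \pr^r_k : v_f(x) \geq a\}.
\]
Concretely, this is the closed subscheme of $\mc{H}_d \times \pr^r_k$ cut out by the condition that the coherent sheaf $\pi_*\mc{F}$ from the preceding proof has fiber dimension at least $a$; this locus is closed by Nakayama's lemma (as cited via \cite[Example III.12.7.2]{MR0463157} in Theorem \ref{LJT}).

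Next, I would consider the projection $p\colon \mc{H}_d \times \pr^r_k \to \mc{H}_d$. Since $\pr^r_k$ is projective, hence proper over $k$, the morphism $p$ is proper, and in particular a closed map. Consequently $p(Z_a)$ is Zariski closed in $\mc{H}_d$. By construction, $f \in p(Z_a)$ if and only if there is some $x \in \pr^r_k$ with $v_f(x) \geq a$, so the complement $\mc{H}_d \smallsetminus p(Z_a)$ is precisely the set of $f$ with $v_f(x) < a$ for every $x \in \pr^r_k$, which is therefore Zariski open.

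There is no real obstacle here. The only minor verification is that quantifying over all (not merely closed) points $x \in \pr^r_k$ gives the same set: this holds because for each fixed $f$ the locus $\{x \in \pr^r_k : v_f(x) \geq a\}$ is closed by \hyperref[LJT]{Theorem~\ref*{LJT}}, so it is empty as soon as it contains no closed points, which in turn is equivalent to the statement that the fiber of $Z_a$ over $f$ is empty.
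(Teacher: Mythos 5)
Your proposal is correct and follows essentially the same route as the paper: form the closed incidence locus $\{(f,x): v_f(x)\geq a\}$ using upper semicontinuity from the preceding proposition, then project to $\mc{H}_d$ and use that the projection is a closed map (by properness of $\pr^r_k$). Your final remark about closed versus non-closed points is a small but welcome rigor check that the paper leaves implicit.
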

\begin{proof} We will show that the set of $f$ for which there exists a point $x\in \pr^r_k$ with $v_f(x)\geq a$ is Zariski closed. Indeed, this set is the image under the projection map $\pi\colon \mc{H}_d\times \pr^r_k\to \mc{H}_d$ of the closed set $\{(f,x) : v(f,x)\geq a\}$. Since $\pi$ is closed \cite[Theorem I.5.3]{MR1328833}, the corollary follows.
\end{proof}

\begin{prop}\label{multcondition} Let $f\in \mc{H}_d$, and suppose there is an integer $N\geq 1$ such that $v_{f^N}(x)<d^N$ for all closed points $x\in \pr^r_k$. Then $\ms{E} = \varnothing$.
\end{prop}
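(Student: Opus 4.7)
The plan is to argue by contradiction. Suppose $\ms{E} \neq \varnothing$, so that $\pr^r_k$ admits a proper totally invariant closed subset $A$. Since $f$ is finite and surjective, it is dimension preserving, so it permutes the top-dimensional irreducible components of $A$; choosing one such component and tracking its $f$-orbit extracts an irreducible proper closed subvariety $F \subsetneq \pr^r_k$ that is totally invariant under $f^\ell$ for some $\ell \geq 1$. Set $q := \mathrm{codim}(F) \geq 1$ and pass to the iterate $g := f^{N\ell}$, an endomorphism of algebraic degree $d_g := d^{N\ell}$ for which $F$ is totally invariant. The goal is to play two incompatible estimates on $v_g$ against one another.

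For the lower bound, total invariance of $F$ under $g$ means $\eta_F$ is the unique $g$-preimage of itself in $\pr^r_k$, so \hyperref[thm:preimage_count]{Theorem~\ref*{thm:preimage_count}} yields $m_g(\eta_F) = [\pr^r_k :_g \pr^r_k] = d_g^r$. Combining this with the relation $m_g = v_g \cdot [F:_g F]_s$ from \hyperref[prop:multiplicity_relation]{Proposition~\ref*{prop:multiplicity_relation}} and the projection-formula identity $[F :_g F] = d_g^{\dim F}$ of \hyperref[prop:degcount]{Proposition~\ref*{prop:degcount}} applied to $g|_F : F \to F$, I obtain
\[
v_g(F) \;=\; \frac{m_g(F)}{[F :_g F]_s} \;\geq\; \frac{d_g^r}{d_g^{\dim F}} \;=\; d_g^q \;\geq\; d_g.
\]
For the upper bound, the multiplicativity of generic multiplicity (\hyperref[prop:multiplicativity]{Proposition~\ref*{prop:multiplicativity}}) factors $v_g$ along the decomposition $g = f^N \circ \cdots \circ f^N$ into $\ell$ copies of $f^N$, and the hypothesis $v_{f^N}(x) < d^N$ at every closed $x$ then gives
\[
v_g(z) \;=\; \prod_{i=0}^{\ell-1} v_{f^N}\bigl(f^{iN}(z)\bigr) \;<\; (d^N)^\ell \;=\; d_g
\]
for every closed point $z \in \pr^r_k$.

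Finally, the Zariski upper semicontinuity of $v_g$ (\hyperref[LJT]{Theorem~\ref*{LJT}}) together with the remark following it guarantees a nonempty Zariski open set of closed points $z \in F$ with $v_g(z) = v_g(F) \geq d_g$, directly contradicting the strict inequality $v_g(z) < d_g$. This forces $\ms{E} = \varnothing$.

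I expect the main (though essentially routine) obstacle to be the very first step, namely verifying that an arbitrary nonempty totally invariant proper closed subset of $\pr^r_k$ always contains an irreducible proper subvariety which is periodic for $f$; once that is in place, the argument reduces to the clean interplay between the preimage count of \hyperref[thm:preimage_count]{Theorem~\ref*{thm:preimage_count}} and the multiplicativity of \hyperref[prop:multiplicativity]{Proposition~\ref*{prop:multiplicativity}}. Note that the proof bypasses \hyperref[ass:technical]{Assumption~\ref*{ass:technical}} entirely: inseparability is handled by the inequality $[F:_gF]_s \leq [F:_gF]$ rather than by an equality.
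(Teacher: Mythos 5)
Your proof is correct and follows essentially the same approach as the paper's: pass to an iterate for which an irreducible component of the exceptional set is totally invariant, derive $v_g(F) \geq d_g^{\,q} \geq d_g$ from the preimage count (Theorem~\ref{thm:preimage_count}) together with Proposition~\ref{prop:degcount}, and contradict the hypothesis via the upper semicontinuity of $v_g$. The paper's version is terser (it compresses the iterate replacement and the multiplicativity step into a single ``replacing $f$ by an iterate if necessary''), but the mechanism is identical, and both proofs sidestep Assumption~\ref{ass:technical} by using only the trivial bound $[F :_g F]_i \geq 1$.
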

\begin{proof} Replacing $f$ by an iterate if necessary, we may assume that $N = 1$ and that all irreducible components of $\ms{E}$ are totally invariant. If $E$ is such a component, then $v_-(E) = v_f(E) = d^{\mathrm{codim}(E)}[E:_fE]_i\geq d$, and hence $v_f(x)\geq d$ for all closed points $x\in X$, a contradiction of our assumption that $v_f(x)<d$ for all $x$. Thus $\ms{E} = \varnothing$.
\end{proof}

\begin{thm}\label{genericity} There is an endomorphism $f\in \mc{H}_d$ and a $B>0$ such that $v_{f^n}(x)\leq B$ for all $n\geq 1$ and all $x\in \pr^r_k$. As a consequence, there is a nonempty Zariski open subset of $\mc{H}_d$ consisting of morphisms with empty exceptional set.
\end{thm}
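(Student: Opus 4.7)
The plan is to first produce an explicit $f_0 \in \mc{H}_d$ together with a uniform bound $B > 0$ such that $v_{f_0^n}(x) \leq B$ for all $n \geq 1$ and all $x \in \pr^r_k$, and then deduce the Zariski-open consequence from this. The deduction is immediate: given such $f_0$ and $B$, choose $N$ large enough that $d^N > B$, so that $v_{f_0^N}(x) \leq B < d^N$ at every point of $\pr^r_k$. Applying Corollary \ref{open} to the polynomial morphism $g \mapsto g^N$ from $\mc{H}_d$ to $\mc{H}_{d^N}$ with $a = d^N$, the set $U$ of $g \in \mc{H}_d$ satisfying $v_{g^N}(x) < d^N$ for every $x$ is Zariski open and nonempty (it contains $f_0$), and by Proposition \ref{multcondition}, every $g \in U$ has $\ms{E} = \varnothing$.

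For the existence of $f_0$, the key reduction is multiplicativity: at a closed point, $v_{f_0^n}(x) = m_{f_0^n}(x) = \prod_{i=0}^{n-1} m_{f_0}(f_0^i(x))$ by Propositions \ref{prop:multiplicity_relation} and \ref{prop:multiplicativity}. Each factor is at most $d^r$ and equals $1$ unless $f_0^i(x)$ lies in the critical locus $C \subset \pr^r_k$ of $f_0$, which is a hypersurface of degree $(r+1)(d-1)$. The corresponding bound on $v_{f_0^n}$ at non-closed points then follows from the upper semicontinuity of $v_{f_0^n}$ provided by Theorem \ref{LJT}, since the generic multiplicity at a non-closed point coincides with $v_{f_0^n}$ at a general closed specialization. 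Hence, producing a uniform bound on $v_{f_0^n}$ reduces to bounding, independently of $x$ and $n$, the number of times the forward orbit of $x$ meets $C$.

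The concrete strategy is to arrange that the preimage divisors $D_i := f_0^{-i}(C)$ for $i = 0, 1, 2, \ldots$ lie in general position in $\pr^r_k$, in the sense that no $r+1$ of them share a common point. Then any closed $x$ lies in at most $r$ of the $D_i$ by a dimension count, so its forward orbit meets $C$ at most $r$ times, yielding the uniform bound $m_{f_0^n}(x) \leq (d^r)^r = d^{r^2}$. The principal obstacle, where the bulk of the work will go, is producing such an $f_0$ over an arbitrary algebraically closed field $k$. The natural strategy is to take $f_0$ to be a carefully chosen perturbation of a tractable model (such as a coordinate monomial map), compute the first few $D_i$ explicitly to verify that they meet properly, and then propagate the general-position property to all $i$ inductively using the explicit structure of $f_0$; this parallels the approach of Dinh-Sibony \cite{MR2468484} in the complex case, which the author has already cited as the model for this section.
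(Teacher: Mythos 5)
Your deduction of the Zariski-open consequence from the existence of $f_0$ is exactly right and matches the paper: pass to the $N$-th iterate where $d^N > B$, apply Corollary~\ref{open} (pulled back through $g \mapsto g^N$), and invoke Proposition~\ref{multcondition}. Your reduction of the main estimate to closed points via upper semicontinuity (Theorem~\ref{LJT}), and your use of multiplicativity (Propositions~\ref{prop:multiplicity_relation} and~\ref{prop:multiplicativity}) to write $m_{f_0^n}(x)$ as a product over the forward orbit, are both correct.

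However, the construction of $f_0$ itself --- which you correctly identify as where ``the bulk of the work will go'' --- is not actually carried out, and this is the technical heart of the theorem. You propose to find $f_0$ whose preimage divisors $D_i = f_0^{-i}(C)$ of the critical locus are in general position (no $r+1$ concurrent), which is the Dinh--Sibony strategy over $\mathbf{C}$, but over an arbitrary algebraically closed field $k$ this is not a routine perturbation argument: one must verify an infinite collection of non-incidence conditions for a single explicit map, and in positive characteristic there are additional pitfalls (e.g.\ if $\mathrm{char}(k) \mid d$ one must first ensure $f_0$ is separable so that $C$ is a proper hypersurface at all, and over $\ol{\mathbf{F}}_p$ every closed point is preperiodic, which makes Baire-style genericity arguments unavailable). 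The paper avoids all of this by a completely different route: it constructs an explicit degree-$d$ rational map $h$ on $\pr^1_k$ whose two critical points are strictly preperiodic (with separate formulas for $d$ a power of $p$ and $d$ not a power of $p$), giving a uniform bound on $v_{h^n}$ in dimension one, and then promotes this to $\pr^r_k$ via the Ueda construction --- the quotient of $h \times \cdots \times h$ on $(\pr^1_k)^r$ by the $S_r$-action --- using the multiplicativity of $v$ through the finite quotient map $\pi$ to get $v_{f^n}(x) \leq B^r \cdot r!$. This is elementary and fully explicit, and handles all characteristics uniformly, whereas your general-position plan remains a plan. You should either execute the general-position construction in detail (addressing separability and the positive-characteristic issues), or adopt the paper's dimension-one-plus-Ueda reduction.
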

\begin{proof} We begin by proving the theorem in dimension $r = 1$. Suppose first that $d\neq p^m$, where $p = \mathrm{char}(k)>0$. Then there is an $a\in k^\times$,  such that $(a + 1)^d = 1$. The rational function $h\colon \pr^1_k\to \pr^1_k$ given by $h(z) = (z + a)^d/z^d$ then satisfies the condition of the theorem. Indeed, $h$ has two critical points of order $d -1$, namely $0$ and $-a$, and both are strictly preperiodic. It follows that $v_{h^n}(z) \leq d^2$ for all $n\geq 1$ and all $z\in \pr^1_k$. In the case when $\mathrm{char}(k) = p>0$ and  $d = p^m$, a similar argument holds for the rational map $h(z) = (z+1)^d/z^{d-1}$. This map has two critical points $z = 0, -1$ of orders $d-2$ and $d-1$, respectively. Both are strictly preperiodic, so $v_{h^n}(z) \leq d(d-1)$ for all $n\geq 1$ and $z\in \pr^1_k$.

We will now use this $1$-dimensional result to deduce the general case via a construction of Ueda \cite{MR1276837}. Choose a degree $d$ rational map $h\colon \pr^1_k\to \pr^1_k$ and $B>0$ such that $v_{h^n}(z) \leq B$ for all $n\geq 1$ and all $z\in \pr^1_k$. Let $H$ be the endomorphism of the $r$-fold product $\pr^1_k\times\cdots\times \pr^1_k$ given by $H = h\times\cdots\times h$. It is easy to check that for $z = (z_1,\ldots, z_r)\in \pr^1_k\times\cdots\times \pr^1_k$, one has $v_{H^n}(z) = v_{h^n}(z_1)\cdots v_{h^n}(z_r)\leq B^r$. The symmetric group $S_r$ acts on the product $\pr^1_k\times\cdots\times \pr^1_k$ by permuting coordinates, and the quotient $(\pr^1_k\times\cdots\times\pr^1_k)/S_r$ is isomorphic to the projective space $\pr^r_k$. Let $\pi\colon \pr^1_k\times\cdots\times\pr^1_k\to \pr^r_k$ be the quotient morphism. Then $\pi$ is  finite of degree $r!$, and $H$ descends through $\pi$ to an endomorphism $f\in \mc{H}_d$, such that $\pi\circ H = f\circ \pi$. For any $x\in \pr^r_k$ and any $z\in \pi^{-1}(x)$, it follows that \[
v_{f^n}(x) = \frac{v_{f^n}(x)v_\pi(z)}{v_\pi(z)} = \frac{v_{H^n}(z)v_\pi(H^n(z))}{v_\pi(z)}\leq \frac{B^r\cdot r!}{1} = B^rr!.\] This proves the first statement of the theorem. The last statement is now immediate from \hyperref[open]{Corollary~\ref*{open}} and \hyperref[multcondition]{Proposition~\ref*{multcondition}}.
\end{proof}

\section{Equidistribution for classical varieties}

We are now in a position to prove an analogue of the equidistribution of preimages theorem for classical varieties.

\begin{thm}\label{thm:Zariski_equid} Let $k$ be an algebraically closed field, and let $X$ be an irreducible projective variety of dimension $m$ over $k$. Suppose $f\colon (X,L)\to (X,L)$ is a flat polarized endomorphism of $X$ with algebraic degree $d\geq 2$. Assume, furthermore, that $f$ satisfies \hyperref[ass:technical]{Assumption~\ref*{ass:technical}}. Let $x\in X$ be any point, and let $V\subseteq X$ be the smallest totally invariant closed set containing $x$. Assume that $V$ is irreducible, with generic point $y$. Then the sequence $d^{-nm}f^{n*}\delta_x$ of Borel probability measures on $X$ converges weakly to $\delta_y$ as $n\to \infty$.
\end{thm}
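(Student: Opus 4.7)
The plan is to pass to an arbitrary subsequential weak limit $\mu$ of the probability measures $\mu_n := d^{-nm}f^{n*}\delta_x$ and show that $\mu$ must equal $\delta_y$; convergence of the full sequence then follows from the sequential compactness of probability measures in \hyperref[thm:measures]{Theorem~\ref*{thm:measures}}. Total invariance of $V$ keeps every $f^{-n}(x)$ inside $V$, and combining the mass formula from \hyperref[prop:pullback]{Proposition~\ref*{prop:pullback}} with $[X:_fX]=d^m$ (\hyperref[prop:degcount]{Proposition~\ref*{prop:degcount}}) exhibits $\mu_n$ as a probability measure supported on $V$. Any weak limit $\mu$ is therefore a positive probability measure supported in $V$, and the weak continuity of $f^*$ (\hyperref[prop:pullback]{Proposition~\ref*{prop:pullback}}) yields the eigenequation $f^*\mu = d^m\mu$.

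The heart of the argument is a support analysis for $\mu$. Writing $\mu = \sum_z c_z\delta_z$ with $c_z\geq 0$ via \hyperref[thm:measures]{Theorem~\ref*{thm:measures}}, the eigenequation unpacks through the explicit formula $f^*\delta_w = \sum_{f(z) = w}m_f(z)\delta_z$ into the pointwise relation $c_{f(z)}m_f(z) = d^m c_z$. Since $m_f(z)\leq d^m$ by \hyperref[thm:preimage_count]{Theorem~\ref*{thm:preimage_count}}, this forces $c_z\leq c_{f(z)}$, so the weights are nondecreasing along forward orbits, and finiteness of $\sum_z c_z$ makes every $z$ with $c_z>0$ preperiodic. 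For a periodic point $z'$ of period $n_1$ in the support, iterating the eigenequation yields $m_{f^{n_1}}(z') = d^{n_1 m}$, which by \hyperref[thm:preimage_count]{Theorem~\ref*{thm:preimage_count}} makes $z'$ the unique $f^{n_1}$-preimage of itself. A dimension/irreducibility argument on the finite morphism $f^{n_1}$ then upgrades this to total invariance of $\overline{\{z'\}}$ under $f^{n_1}$, so $\overline{\{z'\}}$ is part of a totally invariant cycle for $f$.

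The minimality of $V$ now closes the argument. The totally invariant cycle containing $\overline{\{z'\}}$ sits inside $V$, and since $V$ is irreducible it must either equal $V$ (forcing $z' = y$, and then the same dimension argument applied to the generic point $y$ gives $z = y$) or lie in the exceptional set $\mathcal{E}_V \subsetneq V$, which by \hyperref[cor:finiteness]{Corollary~\ref*{cor:finiteness}} is the finite union of proper totally invariant closed subsets of $V$. By minimality $x\notin \mathcal{E}_V$, and total invariance of $\mathcal{E}_V$ forces $f^{-n}(x)\cap \mathcal{E}_V = \varnothing$ for every $n$, so $\mu_n(\mathcal{E}_V) = 0$. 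Since $\mathcal{E}_V$ is closed, weak convergence gives $\mu(\mathcal{E}_V) = 0$, hence $c_z = 0$ for every $z\in \mathcal{E}_V$. Combined with the orbit analysis this yields $\mathrm{supp}\,\mu\subseteq \{y\}$, and normalization pins $\mu = \delta_y$.

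The main difficulty I anticipate is the scheme-theoretic upgrade from the numerical identity $m_{f^{n_1}}(z') = d^{n_1 m}$ to total invariance of $\overline{\{z'\}}$ under $f^{n_1}$: the arithmetic is immediate, but the conclusion requires careful tracking of generic points and their closures under finite morphisms, and it is also where \hyperref[ass:technical]{Assumption~\ref*{ass:technical}} enters implicitly, via the finiteness asserted in \hyperref[cor:finiteness]{Corollary~\ref*{cor:finiteness}} which underwrites the whole role of $\mathcal{E}_V$.
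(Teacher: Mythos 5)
Your strategy diverges from the paper's, and there is a genuine gap at the outset that the rest of your argument inherits.

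The problem is the eigenequation $f^*\mu = d^m\mu$. You pass to a weakly convergent \emph{subsequence} $\mu_{n_i}\to\mu$ and claim that weak continuity of $f^*$ gives the eigenequation. What weak continuity actually gives is $f^*\mu_{n_i}\to f^*\mu$, and since $f^*\mu_{n_i} = d^m\mu_{n_i+1}$, this says $d^m\mu_{n_i+1}\to f^*\mu$. But $\mu_{n_i+1}$ is a \emph{shifted} subsequence, and nothing forces it to converge to $\mu$ (or to converge at all without passing to a further subsequence, which lands you at a different limit $\mu'$). The conclusion is only $f^*\mu = d^m\mu'$ for some other subsequential limit $\mu'$. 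This is exactly the obstruction that separates equidistribution of the full sequence $d^{-nm}f^{n*}\delta_x$ from equidistribution of Ces\`aro averages: for the averages $n^{-1}\sum_{i<n}d^{-im}f^{i*}\delta_x$, one has $d^{-m}f^*\mu_n-\mu_n\to 0$ in total variation, so any subsequential limit \emph{is} totally invariant, and your downstream analysis would then run cleanly. That is precisely what the paper's \hyperref[thmC]{Theorem~\ref*{thmC}} proves (with an extra superattracting hypothesis). But for the unaveraged sequence, you cannot get the eigenequation for free.

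Everything that follows in your proposal rests on this: the pointwise relation $c_{f(z)}m_f(z)=d^mc_z$, the nondecreasing weights along forward orbits, preperiodicity of the support, the identity $m_{f^{n_1}}(z')=d^{n_1m}$ for periodic points, the scheme-theoretic upgrade to total invariance, and the use of the minimality of $V$ are each correct \emph{given} $f^*\mu=d^m\mu$. (Two small remarks on that downstream analysis: you do not need the full exceptional set $\ms{E}_V$ or \hyperref[cor:finiteness]{Corollary~\ref*{cor:finiteness}} — it suffices to apply the "$\mu_n(W)=0$, hence $\mu(W)=0$" argument to the single totally invariant cycle containing $\ol{\{z'\}}$; and to conclude $z=y$ from $f^N(z)=y$ you should note explicitly that $y$ is $f$-fixed because $f$ is finite surjective on $V$, and then $m_f(y)=d^m$ makes $y$ the unique $f^N$-preimage of itself.) But the load-bearing first step fails.

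The paper sidesteps the eigenequation entirely. It takes a minimal closed set $W$ with $\mu(W)>0$, shows $W$ is irreducible and in $V$, and then argues by contradiction that $W$ must lie in a totally invariant cycle: if not, $v_-(W)<d^{\mathrm{codim}\,W}$ by \hyperref[thm:totinv]{Theorem~\ref*{thm:totinv}}, and a careful iterative argument (the auxiliary Lemma 5.2 tracking the quantity $R(z,I)=\limsup_i d^{-m(n_i-n_I)}[f^{(n_i-n_I)*}\delta_z](W)$) combined with the degree bound \hyperref[prop:bounddeg]{Proposition~\ref*{prop:bounddeg}} forces $\mu(W)=0$, a contradiction. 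This machinery is tuned specifically to extract information from a single subsequential limit along the subsequence $(n_i)$, precisely because the eigenequation is unavailable. Your approach would work as a clean proof of the Ces\`aro-averaged statement, but not of the theorem as stated.
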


Note, the case when $V$ is reducible will be considered in \hyperref[cor:varequid]{Corollary~\ref*{cor:varequid}}.

\begin{proof} Let $\mu_n := d^{-nm}f^{n*}\delta_x$. From \hyperref[thm:measures]{Theorem~\ref*{thm:measures}}, we know that the space of Borel probability measures on $X$ is sequentially compact in the weak topology. It therefore suffices to prove the following: any weakly convergent subsequence $\mu_{n_i}$ of $\mu_n$ converges to $\delta_y$. We therefore fix a weakly convergent subsequence $\mu_{n_i}$, converging to some measure $\mu$. Let $W\subseteq X$ be a minimal closed set with $\mu(W)>0$. If $W$ were reducible, say $W = W_1\cup W_2$, then by the minimality of $W$ we would have $\mu(W)\leq \mu(W_1) + \mu(W_2) = 0 + 0 = 0$, a contradiction. Therefore $W$ is irreducible. One easily sees that $\mu_n(V) = 1$ for all $n$, and hence $\mu(V) = 1$. In particular, $\mu(W\cap V) = \mu(W)>0$, so the minimality of $W$ implies that $W\subseteq V$.

To prove the theorem, it will suffice to show that $W$ is part of a totally invariant cycle for $f$. Indeed, if we can do this, then the minimality of $V$ implies $W = V$. But then $\mu(V) = 1$ and $\mu(Z) = 0$ for all closed $Z\subsetneq V$, implying that $\mu = \delta_y$, as desired. We will prove that $W$ is part of a totally invariant cycle for $f$ by contradiction. Suppose $W$ is not part of a totally invariant cycle for $f$. Using \hyperref[thm:totinv]{Theorem~\ref*{thm:totinv}}, one then has $v_-(W)< d^q$, where $q$ is the codimension of $W$ in $X$. We need the following lemma to proceed.

\begin{lem}\label{lem:good_preimage} There is an integer $I\geq 0$ and a preimage $z\in f^{-n_I}(x)$ such that \begin{enumerate}
\item[$1.$] $z\in W$ and $v_-(z)<d^q$.
\item[$2.$] $\limsup_{i\to \infty} d^{-m(n_i - n_I)}[f^{(n_i - n_I)*}\delta_z](W)>0$.
\end{enumerate}
\end{lem}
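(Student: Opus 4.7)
The plan is to argue by contradiction, decomposing $\mu_{n_i}(W)$ over level-$n_I$ preimages of $x$ and then showing that the ``good'' preimages---those in $W\cap U$---must contribute positively in the limit.

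First I would establish the setup. Since $W$ is assumed not to be part of a totally invariant cycle, Theorem~\ref{thm:totinv} gives $v_-(W)<d^q$. By the upper semicontinuity of $v_-$ (Theorem~\ref{thm:usc}) applied on the Noetherian space $W$, $v_-$ takes only finitely many values on $W$; hence there exist a proper Zariski closed subset $W'\subsetneq W$ and a constant $M<d^q$ with $v_-\leq M$ uniformly on $U:=W\setminus W'$. By the minimality of $W$, $\mu(W')=0$. Weak convergence of $\mu_{n_i}\to\mu$ on closed sets then gives $\mu_{n_i}(W')\to 0$ and $\mu_{n_i}(W)\to\mu(W)>0$, so also $\mu_{n_I}(W\cap U)\to\mu(W)$ as $I\to\infty$.

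Next, by the multiplicativity of multiplicities (Proposition~\ref{prop:multiplicativity}) and the definition of the pullback on Dirac masses, for any $I$ with $n_I\leq n_i$ one has
\[
\mu_{n_i}(W)\;=\;\sum_{z\in f^{-n_I}(x)}\mu_{n_I}(\{z\})\,Q_I^{(i)}(z),\qquad Q_I^{(i)}(z):=d^{-(n_i-n_I)m}\bigl[f^{(n_i-n_I)*}\delta_z\bigr](W)\in[0,1].
\]
Supposing for contradiction that no $z$ as in the lemma exists, we have $\limsup_i Q_I^{(i)}(z)=0$ for all $I\geq 0$ and every $z\in f^{-n_I}(x)\cap W\cap U$. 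Passing to a subsequence of $(n_i)$ along which $Q_I^{(i)}(z)\to q_z^{(I)}\in[0,1]$ for each of the finitely many $z\in f^{-n_I}(x)$, taking $i\to\infty$ in the identity above yields
\[
\mu(W)\;=\;\sum_{z\in f^{-n_I}(x)\cap W'}\mu_{n_I}(\{z\})\,q_z^{(I)}\;+\;\sum_{z\in f^{-n_I}(x)\setminus W}\mu_{n_I}(\{z\})\,q_z^{(I)},
\]
the $W\cap U$-contribution having vanished by hypothesis. The first sum on the right is bounded by $\mu_{n_I}(W')\to 0$ as $I\to\infty$.

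The main obstacle, and the hardest step, is to show that the second sum---the contribution from preimages $z$ lying outside $W$---also vanishes as $I\to\infty$, which would give the contradiction $\mu(W)\leq 0$. A direct rearrangement using multiplicativity identifies this sum with $\mu_{n_i}(W\setminus f^{-(n_i-n_I)}W)$, the $\mu_{n_i}$-mass of those points of $W$ whose $f^{n_i-n_I}$-image escapes $W$. To bound this, I would combine Proposition~\ref{prop:bounddeg} (bounding the number of closed-point preimages of $z$ lying in $W$ by $C\,d^{(n_i-n_I)\dim W}$) with the uniform estimate $v_-\leq M<d^q$ on $U$ to control the multiplicities of those preimages; these together should produce a geometric decay factor of the form $d^{-\delta(n_i-n_I)}$, forcing the second sum to vanish in the limit. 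The resulting contradiction would establish the existence of the desired $z$ and complete the proof.
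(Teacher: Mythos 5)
Your setup parallels the paper's opening moves: you use upper semicontinuity of $v_-$ to carve out a Zariski open $U = W\setminus W'$ on which $v_-$ is bounded by some $M<d^q$, note $\mu(W')=0$ by minimality of $W$, and set up the level-$n_I$ decomposition
\[
\mu_{n_i}(W) \;=\; \sum_{z\in f^{-n_I}(x)}\mu_{n_I}(\{z\})\,Q_I^{(i)}(z).
\]
The fatal problem is the step you flag as ``the hardest'': showing that the contribution from preimages $z\notin W$ vanishes as $I\to\infty$. This is not merely hard; it is provably false, and your own identity shows it. Once the $U$-terms drop out under the contradiction hypothesis and the $W'$-terms are crushed by $\mu_{n_I}(W')\to 0$, the identity $\mu(W)=\text{(first sum)}+\text{(second sum)}$ forces the second sum to converge to $\mu(W)>0$. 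Concretely, in your rewriting the second sum is $\lim_i\mu_{n_i}\bigl(W\setminus f^{-(n_i-n_I)}(W)\bigr)$; since $\lim_i\mu_{n_i}\bigl(W\cap f^{-(n_i-n_I)}(W)\bigr)\leq\mu_{n_I}(W')\to 0$ while $\lim_i\mu_{n_i}(W)=\mu(W)$, the second sum is bounded \emph{below} by a quantity tending to $\mu(W)$. No combination of Proposition~\ref{prop:bounddeg} with the bound $v_-\leq M$ on $U$ can make it vanish: the points $z\notin W$ are exactly those on which you have no control of $v_-(z)$, and their preimages landing in $W$ may carry large multiplicity.

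The paper sidesteps this entirely with an iterative branch-tracing argument rather than a one-shot global decomposition. It sets $R(z,I):=\limsup_i d^{-m(n_i-n_I)}\bigl[f^{(n_i-n_I)*}\delta_z\bigr](U)$, so the contradiction hypothesis reads $R(z,I)=0$ for $z\in U$, and proves the claim: if $R(z,I)\geq c>0$, then some deeper level $J>I$ admits a \emph{single} preimage $z'\in f^{-(n_J-n_I)}(z)$ outside $U$ with $R(z,I)\leq(1-c/2)R(z',J)$. The proof of the claim is a one-step version of your decomposition, but used differently: choose $J$ so the normalized mass of $z$'s level-$J$ preimages in $U$ is $\geq c/2$; the preimages in $U$ contribute nothing to $R(z,I)$ by the contradiction hypothesis, the remaining preimages have normalized total multiplicity $\leq 1-c/2$, and so the maximum of their $R$-values must exceed $R(z,I)/(1-c/2)$. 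Iterating from $R(x,0)=\mu(W)=:c$ produces a chain with $\mu(W)\leq(1-c/2)^jR(z_j,I_j)\leq(1-c/2)^j\to 0$, the desired contradiction. The idea you are missing is that one should not try to bound the aggregate contribution of all off-$W$ preimages at once; one traces a single branch along which the normalized mass concentrates and watches $R$ blow past its trivial upper bound of $1$.
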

\begin{proof} Recall from \hyperref[thm:usc]{Theorem~\ref*{thm:usc}} that the reverse asymptotic multiplicity function $v_-$ is upper semicontinuous. Since $v_-(W) < d^q$, there is a nonempty open subset $U$ of $W$ such that $v_- < d^q$ on $U$. By the minimality of $W$, one has $\mu(W) = \mu(U) = \lim_{i\to \infty} \mu_{n_i}(U)$. We will prove the lemma by contradiction, so suppose no such $z$ and $I$ exist. To simply notation, set \[
R(z,I) := \limsup_{i\to \infty} d^{-m(n_i - n_I)}[f^{(n_i - n_I)*}\delta_z](U)\] whenever $I\geq 0$ is an integer and $z\in f^{-n_I}(x)$. Note that $R(z,I)\leq 1$, and by our contradiction assumption $R(z,I) = 0$ whenever $z\in U$.

\textbf{Claim:} If $I\geq 0$ and $z\in f^{-n_I}(x)$ are such that $R(z,I)\geq c>0$, then there is an integer $J>I$ and a preimage $z'\in f^{-(n_J - n_I)}(z)$ such that $R(z,I)\leq (1 - c/2)R(z',J)$. To prove the claim, let $J>I$ be any integer large enough that $d^{-m(n_J - n_I)}[f^{(n_J - n_I)*}\delta_z](U)\geq c/2$. Suppose that $z_1,\ldots, z_s$ are the elements of $f^{-(n_J - n_I)}(z)\cap U$, and that $z_{s+1},\ldots, z_t$ are the elements of $f^{-(n_J - n_I)}(z)$ lying outside $U$. Then \[
R(z,I)\leq \sum_{i=1}^t \frac{m_{f^{n_J - n_I}}(z_i)}{d^{m(n_J - n_i)}}R(z_i,J) = \sum_{i=s+1}^t  \frac{m_{f^{n_J - n_I}}(z_i)}{d^{m(n_J - n_i)}}R(z_i,J)\] since $R(z_i, J) = 0$ for all $i\leq s$. One then has the easy upper bound \begin{align*}
R(z,I) & \leq \max\{R(z_{s+1}, J),\ldots, R(z_t,J)\}\sum_{i=s+1}^t \frac{m_{f^{n_J - n_I}}(z_i)}{d^{m(n_J - n_I)}}\\
& = \max\{R(z_{s+1},J),\ldots, R(z_t,J)\}d^{-m(n_J - n_I)}[f^{(n_J - n_I)*}\delta_z](X\smallsetminus U).
\end{align*} By our choice of $J$, it follows that \[
R(z,I)\leq (1 - c/2)\max\{R(z_{s+1},J),\ldots, R(z_t,J)\},\] proving the claim.

Let $c = \mu(W)$. By definition, $R(x,0) = c$, so the claim yields an integer $I_1>0$ and a preimage $z_1\in f^{-n_{I_1}}(x)$ such that $\mu(W) = c = R(x,0) \leq (1 - c/2)R(z_1,I_1)$. In particular, \[
R(z_1,I_1)\geq \frac{c}{1 - c/2} > c.\] We can thus apply the claim again to find an integer $I_2>I_1$ and a  $z_2\in f^{-(n_{I_2} - n_{I_1})}(z_1)$ such that $R(z_1,I_1)\leq (1 - c/2)R(z_2,I_2)$. Thus $\mu(W) = R(x,0)\leq (1 - c/2)^2R(z_2,I_2)$. Continuing in this fashion, we construct sequences $I_j$ and $z_j$ such that \[
\mu(W)\leq (1 - c/2)^j R(z_j,I_j) \leq (1 - c/2)^j \to 0.\] This contradicts the assumption that $\mu(W)>0$, and completes the proof.
\end{proof}

We now continue with the proof of \hyperref[thm:Zariski_equid]{Theorem~\ref*{thm:Zariski_equid}}. Let $I$ and $z$ be as in the statement of \hyperref[lem:good_preimage]{Lemma~\ref*{lem:good_preimage}}. Let $\Delta\in \R$ be such that $v_-(z) < \Delta < d^q$. Passing to a subsequence if necessary, we may assume that the limit \[\tag{$*$}
c:= \lim_{i\to \infty} d^{-m(n_i-n_I)}[f^{(n_i - n_I)*}\delta_z](W)\] exists and is positive. For each $i\geq I$, let $z_1^i,\ldots, z_{s_i}^i$ denote the elements of $f^{-(n_i-n_I)}(z)$ which lie in $W$. Then the right hand side of ($*$) is \[
\lim_{i\to \infty} d^{-m(n_i - n_I)}\sum_{j=1}^{s_i} m_{f^{n_i - n_I}}(z_j^i) = \lim_{i\to \infty} d^{-m(n_i - n_I)}\sum_{j=1}^{s_i} v_{f^{n_i - n_I}}(z_j^i) [E_j^i:_{f^{n_i - n_I}} E]_s,\] where $E_j^i = \ol{\{z_j^i\}}$ and $E = \ol{\{z\}}$. Since $v_-(z) < \Delta$, we have $v_{f^{n_i - n_I}}(z_j^i)\leq \Delta^{n_i - n_I}$ for every $j$ whenever $i$ is sufficiently large. Also, by \hyperref[prop:bounddeg]{Proposition~\ref*{prop:bounddeg}}, we have \[
\sum_{j=1}^s [E_j^i :_{f^{n_i - n_I}} E]_s \leq Cd^{(n_i - n_I)\dim(W)}\] for some $C>0$ independent of $i$. Combining these inequalities, we see that \[
c \leq \limsup_{i\to \infty} d^{-m(n_i - n_I)}\Delta^{n_i - n_I}Cd^{(n_i - n_I)\dim(W)} = C\limsup_{i\to \infty}\,  (d^{-q}\Delta)^{n_i - n_I} = 0,\] where here the last equality results from the fact that $\Delta<d^q$. This is a contradiction of the fact that $c>0$. Therefore $W$ is totally invariant, completing the proof.
\end{proof}

From this equidistribution theorem we derive a couple of easy variants.

\begin{cor}\label{cor:varequid} Let $f\colon X\to X$ be as in \hyperref[thm:Zariski_equid]{Theorem~\ref*{thm:Zariski_equid}}. Let $x\in X$ be any point, and let $V$ be the smallest totally invariant closed subset of $X$ containing $x$. Let $V = V_0\cup\cdots\cup V_{s-1}$ be the irreducible decomposition of $V$, and let $y_i$ be the generic point of $V_i$ for each $i$. Then, after relabeling the $V_i$ if necessary, one has for each $i = 0,\ldots, s-1$ that $d^{-m(i+sn)}f^{(i+sn)*}\delta_x\to \delta_{y_i}$ weakly as $n\to \infty$.
\end{cor}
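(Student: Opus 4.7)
The plan is to reduce to \hyperref[thm:Zariski_equid]{Theorem~\ref*{thm:Zariski_equid}} applied to the iterate $g = f^s$. First I would analyze how $f$ permutes the irreducible components of $V$. Since $V$ is $f$-totally invariant and $f$ is finite surjective, $f$ sends each component $V_j$ onto an irreducible component of $V$, defining a permutation $\sigma$ of $\{0,\ldots,s-1\}$ via $f(V_j) = V_{\sigma(j)}$. Components in a common $\sigma$-orbit share a dimension since $f$ is finite. I would then argue that the minimality of $V$ forces $\sigma$ to be a single $s$-cycle: otherwise the totally invariant closure of the $\sigma$-orbit of the component $V_0$ containing $x$ would yield a proper $f$-totally invariant closed subset of $V$ still containing $x$. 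After relabeling so that $f(V_{i+1}) = V_i$ (indices mod $s$), all $V_i$ share a common dimension, $f^s$ preserves each $V_i$ setwise, and $f^i(V_i) = V_0$.

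Next I would verify that $V_0$ is the smallest $g$-totally invariant closed subset of $X$ containing $x$. The cyclic structure gives $g(V_0) = V_0$, and among components of $V$ only $V_0$ has $g$-image equal to $V_0$ (since $\sigma^s = \mathrm{id}$). Once this is established, $g = f^s$ is a flat polarized endomorphism of $(X,L^s)$ of algebraic degree $d^s \geq 2$ inheriting \hyperref[ass:technical]{Assumption~\ref*{ass:technical}} from $f$, with $V_0$ irreducible and generic point $y_0$. \hyperref[thm:Zariski_equid]{Theorem~\ref*{thm:Zariski_equid}} then yields
\[
d^{-smn}\, f^{sn*}\delta_x \;=\; (d^s)^{-nm}\, g^{n*}\delta_x \;\longrightarrow\; \delta_{y_0}
\]
weakly as $n \to \infty$. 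For each fixed $i \in \{0,\ldots,s-1\}$, the weak continuity of $f^{i*}$ from \hyperref[prop:pullback]{Proposition~\ref*{prop:pullback}} then gives
\[
d^{-m(i+sn)}\, f^{(i+sn)*}\delta_x \;=\; d^{-mi}\, f^{i*}\!\bigl(d^{-smn}\, f^{sn*}\delta_x\bigr) \;\longrightarrow\; d^{-mi}\, f^{i*}\delta_{y_0}.
\]

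Finally, I would identify $d^{-mi} f^{i*} \delta_{y_0}$ with $\delta_{y_i}$. By \hyperref[prop:pullback]{Proposition~\ref*{prop:pullback}}, $f^{i*}\delta_{y_0} = \sum_{f^i(z)=y_0} m_{f^i}(z)\,\delta_z$, and each such $z$ has $\overline{\{z\}}$ mapping dominantly onto $V_0$ under $f^i$; finiteness of $f^i$ forces $\dim\overline{\{z\}} = \dim V_0$, while total invariance gives $\overline{\{z\}} \subseteq f^{-i}(V_0) \subseteq V$. Since all components of $V$ share a common dimension, $\overline{\{z\}}$ must itself be a component of $V$, and the cyclic relabeling singles out $\overline{\{z\}} = V_i$, so $z = y_i$. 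Combining \hyperref[thm:preimage_count]{Theorem~\ref*{thm:preimage_count}} with \hyperref[prop:degcount]{Proposition~\ref*{prop:degcount}} and \hyperref[prop:multiplicativity]{Proposition~\ref*{prop:multiplicativity}} gives $\sum_{f^i(z)=y_0} m_{f^i}(z) = [X:_{f^i}X] = d^{im}$, so $m_{f^i}(y_i) = d^{im}$ and $d^{-mi}f^{i*}\delta_{y_0} = \delta_{y_i}$, finishing the proof. The main obstacle in this outline lies in the second step: going from the easy observation that $V_0$ is $f^s$-invariant with top-dimensional preimage equal to itself to the stronger statement that $V_0$ is \emph{strictly} $g$-totally invariant as a subset of $X$. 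This requires excluding stray lower-dimensional contributions to $g^{-1}(V_0)$ coming from other components of $V$, which is where the minimality of $V$ and the single-cycle structure of $\sigma$ must be used carefully.
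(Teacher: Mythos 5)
Your proposal follows the same reduction as the paper: apply \hyperref[thm:Zariski_equid]{Theorem~\ref*{thm:Zariski_equid}} to the iterate $g=f^s$ with minimal totally invariant set $V_0$, then conclude via weak continuity of $f^{i*}$ (\hyperref[prop:pullback]{Proposition~\ref*{prop:pullback}}) and the identity $d^{-mi}f^{i*}\delta_{y_0}=\delta_{y_i}$. The ``obstacle'' you flag at the end is not a genuine gap and closes quickly: since $g$ is finite and flat, every irreducible component $W$ of $g^{-1}(V_0)$ dominates $V_0$, hence $g(W)=V_0$; if $V_l$ is the unique component of $V$ containing $W$, then $V_0=g(W)\subseteq g(V_l)=V_l$ (using $\sigma^s=\mathrm{id}$), so $l=0$, and a dimension count gives $W=V_0$. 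Minimality is similarly easy: if $V_0'\subsetneq V_0$ were $g$-totally invariant and contained $x$, then $\bigcup_{j=0}^{s-1}f^{-j}(V_0')$ would be a proper $f$-totally invariant closed set containing $x$ whose components all have dimension strictly less than $\dim V_0$, contradicting minimality of $V$. The paper's own proof asserts both of these facts without comment, so your version is if anything more careful on precisely the point you worried about.
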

\begin{proof} Without loss of generality, we may assume that $x\in V_0$, and that $f(V_i) = V_{i-1}$, the indices taken modulo $s$. Note, in particular, that $d^{-m}f^*\delta_{y_i} = \delta_{y_{i+1}}$. The set $V_0$ is totally invariant for the composition $f^s$, and is in fact the minimal $f^s$-totally invariant closed set containing $x$. Thus by \hyperref[thm:Zariski_equid]{Theorem~\ref*{thm:Zariski_equid}}, $d^{-msn}f^{sn*}\delta_x\to \delta_{y_0}$ weakly as $n\to \infty$. We know that the pull-back operator $f^*\colon\mc{M}(X)\to \mc{M}(X)$ is weakly continuous by \hyperref[prop:pullback]{Proposition~\ref*{prop:pullback}}, and thus for any $i = 0,\ldots, s-1$ we see that \[
d^{-m(i+sn)}f^{(i+sn)*}\delta_x = d^{-mi}f^{i*}[d^{-msn}f^{sn*}\delta_x]\to d^{-mi}f^{i*}\delta_{y_0} = \delta_{y_i},\] as desired.
\end{proof}

\begin{cor}\label{cor:classical_equid} Let $f\colon X\to X$ be as in \hyperref[thm:Zariski_equid]{Theorem~\ref*{thm:Zariski_equid}}.  Let $\mu$ be a Borel probability measure on $X$ that gives no mass to the exceptional set $\ms{E}$ of $f$. Then $d^{-mn}f^{n*}\mu\to \delta_y$ weakly as $n\to \infty$, where $y$ is the generic point of $X$.
\end{cor}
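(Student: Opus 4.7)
The plan is to reduce to \hyperref[thm:Zariski_equid]{Theorem~\ref*{thm:Zariski_equid}} by an application of dominated convergence. First I would record the following key observation: for any $y' \in X\smallsetminus \ms{E}$, the smallest totally invariant closed subset of $X$ containing $y'$ must be $X$ itself, since by definition every proper totally invariant closed subset of $X$ lies inside $\ms{E}$. Because $X$ is irreducible with generic point $y$, \hyperref[thm:Zariski_equid]{Theorem~\ref*{thm:Zariski_equid}} then yields $d^{-mn}f^{n*}\delta_{y'}\to \delta_y$ weakly for every such $y'$.

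Next, set $\mu_n := d^{-mn}f^{n*}\mu$. By the characterization of weak convergence in \hyperref[thm:measures]{Theorem~\ref*{thm:measures}}, it suffices to prove $\mu_n(E)\to \delta_y(E)$ for every closed set $E\subseteq X$. The case $E = X$ is immediate, since $\mu_n(X) = d^{-mn}[X:_{f^n}X] = 1$ by \hyperref[prop:pullback]{Proposition~\ref*{prop:pullback}} and \hyperref[prop:degcount]{Proposition~\ref*{prop:degcount}}. So suppose $E\subsetneq X$, so that $\delta_y(E) = 0$. Using the duality between $f^{n*}$ on measures and $f^n_*$ on functions from \hyperref[prop:pullback]{Proposition~\ref*{prop:pullback}}, one has
\[
\mu_n(E) = \int_X d^{-mn}(f^n_*\chi_E)(y')\, d\mu(y') = \int_X h_n(y')\, d\mu(y'),
\]
where the integrand $h_n(y') := (d^{-mn}f^{n*}\delta_{y'})(E)$ satisfies $0\leq h_n\leq 1$, being the measure of $E$ under the probability measure $d^{-mn}f^{n*}\delta_{y'}$.

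Finally, the preliminary observation gives $h_n(y')\to \delta_y(E) = 0$ for every $y' \in X\smallsetminus \ms{E}$, hence $\mu$-almost everywhere by the hypothesis $\mu(\ms{E}) = 0$. Dominated convergence then yields $\mu_n(E)\to 0 = \delta_y(E)$, completing the proof. The only point requiring a little care is that the integrand $h_n$ be $\mu$-measurable; this follows because the proof of \hyperref[prop:pullback]{Proposition~\ref*{prop:pullback}} shows $f^n_*\chi_E$ lies in the uniform closure $\overline{SC}(X)$ of the space of semicontinuous functions, and such functions are automatically bounded Borel functions. Once the observation about $\ms{E}$ is in hand everything else is routine, so I do not anticipate any substantial obstacle.
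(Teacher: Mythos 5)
Your proof is correct, and it takes a slightly different route from the paper's. The paper works directly with the atomic decomposition $\mu = \sum_x c_x\delta_x$ guaranteed by Theorem~\ref{thm:measures}: it truncates $\mu$ to a finite sum $\mu_N$ with tail $\nu_N$ of mass $<\eps$, applies Theorem~\ref{thm:Zariski_equid} to each of the finitely many atoms of $\mu_N$, and combines. You instead exploit the duality $\mu_n(E) = \int_X d^{-mn}(f^n_*\chi_E)\,d\mu$ and observe that the integrand $h_n(y') = (d^{-mn}f^{n*}\delta_{y'})(E)$ is a probability of $E$, hence uniformly bounded by $1$, converges to $0$ for every $y'\notin\ms{E}$ (by the key observation that the minimal totally invariant closed set containing such $y'$ is $X$), and is Borel measurable because $f^n_*\chi_E\in SC(X)$. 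Dominated convergence then finishes. Both proofs hinge on the same single-atom equidistribution and the same observation about $\ms{E}$; the difference is purely in how one passes from atoms to the full measure. Your version leverages the adjunction between $f^{n*}$ and $f^n_*$ more explicitly and packages the limit argument into a standard citation of dominated convergence, which is arguably cleaner, whereas the paper's truncation argument is more elementary and avoids invoking measurability of $h_n$ (it only needs $\mu_N(\ms{E})=0$ for finite sums). Both are valid; yours is a perfectly good alternative.
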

\begin{proof} Let $\mu = \sum_{x\in X} c_x\delta_x$. For each $x\in X$ such that $c_x\neq 0$, one has $x\notin \ms{E}$, as otherwise $\mu(\ms{E})\neq 0$. For such $x$, it follows from \hyperref[thm:Zariski_equid]{Theorem~\ref*{thm:Zariski_equid}} that the preimages of $x$ equidistribute to $\delta_y$. Let $x_1,x_2,\ldots$ be an enumeration of those $x\in X$ with $c_x\neq 0$. For each $N\geq 1$, let $\mu_N = \sum_{i=1}^N c_{x_i}\delta_{x_i}$ and $\nu_N = \sum_{i>N} c_{x_i}\delta_{x_i}$. Let $\eps>0$ be given, and choose $N$ large enough so that $\nu_N(X)<\eps$. Then for any closed set $E\subseteq X$, one has \[
|d^{-mn}(f^{n*}\mu)(E) - \delta_y(E)| \leq |d^{-mn}(f^{n*}\mu_N)(E) - \delta_y(E)| + \eps\] for every $n\geq 1$. When $n$ is sufficiently large, however, \hyperref[thm:Zariski_equid]{Theorem~\ref*{thm:Zariski_equid}} gives that \[|d^{-mn}(f^{n*}\mu_N)(E) - \mu_N(X)\delta_y(E)| \leq \eps.\] Combining this with the previous inequality yields \[
|d^{-mn}(f^{n*}\mu)(E) - \delta_y(E)| \leq (1 - \mu_N(X))\delta_y(E) + 2\eps \leq 3\eps.\] Therefore $d^{-mn}f^{n*}\mu \to \delta_y$ as $n\to \infty$.
\end{proof}

\section{Berkovich analytic spaces}

Having proved the equidistribution theorem for classical varieties, we now move on to the nonarchimedean setting. Fix an algebraically closed complete nonarchimedean field $K$. We do not assume that the absolute value on $K$ is nontrivial. In this section we briefly review the Berkovich analytification of varieties over $K$, and discuss multiplicities for finite morphisms between analytic varieties. The main references for the material in this section are the works of Berkovich  \cite{MR1070709} and \cite{MR1259429}.

We begin by discussing the Berkovich analytification of a variety $X$ over $K$. Suppose first that $X$ is affine, with coordinate ring $K[X]$. Then the Berkovich analytification of $X$ is, as a set, defined to be the collection $X^\an$ of all multiplicative seminorms $K[X]\to \R$ which extend the given absolute value on $K$. We will denote points in $X^\an$ by letters such as $\bk{x}$ and $\bk{y}$. By definition these are seminorms on $K[X]$; the value of $\bk{x}$ on a function $\varphi\in K[X]$ is typically denoted $|\varphi(\bk{x})|$. We equip $X^\an$ with the weakest topology for which each of the evaluation maps $\bk{x}\mapsto |\varphi(\bk{x})|$ for $\varphi\in K[X]$ are continuous. In this topology, $X^\an$ is locally compact, Hausdorff, and locally path connected.

The closed points of $X$ naturally embed into $X^\an$ in the following way. If $x\in X$ is a closed point, we define a seminorm $K[X]\to \R$ associated to $x$ by $\varphi\in K[X]\mapsto |\varphi(x)|$, where the absolute value here is the given absolute value on $K$. If the absolute value on $K$ is trivial, then the scheme-theoretic points of $X$ also embed into $X^\an$. Indeed, if $x$ is any scheme theoretic point of $X$ corresponding to the prime ideal $\mf{p}$ of $K[X]$, we can define a corresponding seminorm $K[X]\to \R$ by \[
\varphi\in K[X]\mapsto \begin{cases} 1 & \varphi\notin \mf{p}.\\ 0 & \varphi\in \mf{p}.\end{cases}\] In either the trivially or non-trivially valued case, we will call seminorms of this form \emph{classical points}. Classical points are dense in $X^\an$ when the absolute value on $K$ is nontrivial.

There is a natural map $\pi\colon X^\an\to X$, where $X$ here is viewed as a scheme, allowing for non-closed points. The map $\pi$ sends a seminorm $\bk{x}\in X^\an$ to its \emph{kernel} \[
\pi(\bk{x}) := \{\varphi\in K[X] : |\varphi(\bk{x})| = 0\},\] which is easily seen to be a prime ideal. If $\bk{x}$ is a classical point of $X^\an$ corresponding to $x\in X$, then $\pi(\bk{x}) = x$. The map $\pi$ is continuous when $X$ is given its Zariski topology.

Suppose now that $f\colon X\to Y$ is a morphism of affine varieties, with corresponding homomorphism $f^*\colon K[Y]\to K[X]$ of coordinate rings. We can define a map $f^\an\colon X^\an\to Y^\an$ by sending a seminorm $\bk{x}\in X^\an$ to the seminorm $f^\an(\bk{x})$ defined by $|\varphi(f^\an(\bk{x}))| = |(f^*\varphi)(\bk{x})|$ for all $\varphi\in K[Y]$. The map $f^\an$ is continuous, and agrees with the map $f\colon X\to Y$ on classical points. For this reason, we will abuse notation and denote $f^\an$ simply by $f$.

Now assume that $X$ is any variety over $K$, not necessarily affine. One defines the Berkovich analytification $X^\an$ of $X$ as follows. Choose a finite open cover of $X$ by affines $U_1,\ldots, U_r$. One obtains the space $X^\an$ by gluing together the analytifications $\pi_i\colon U_i^\an\to U_i$. Namely, we identify seminorms $\bk{x}\in U_i^\an$ and $\bk{y}\in U_j^\an$ if $\pi_i(\bk{x})\in U_i\cap U_j$ and $\pi_j(\bk{y})\in U_i\cap U_j$ and if $\bk{x}$ and $\bk{y}$ give the same seminorm on $\mc{O}_{U_i}(U_i\cap U_j)\cong \mc{O}_{U_j}(U_i\cap U_j)$. The space $X^\an$ constructed in this fashion is locally compact, Hausdorff, and locally path connected. Moreover, if $X$ is an irreducible projective variety, then $X^\an$ is compact and connected. The $\pi_i\colon U_i^\an\to U_i$ glue together to a map $\pi\colon X^\an\to X$, which is continuous when $X$ is given its Zariski topology.

As before, closed points of $X$ naturally embed into $X^\an$, and if $K$ is equipped with the trivial absolute value, so do the scheme-theoretic points of $X$. These points in $X^\an$ are the \emph{classical points}. If $f\colon X\to Y$ is a morphism of varieties over $K$, there is an induced continuous map $f\colon X^\an\to Y^\an$, which agrees with $f\colon X\to Y$ on classical points.

While we do not go into details here (see \cite{MR1070709}), one can define a sheaf of rings on $X^\an$, called the \emph{structure sheaf} of $X^\an$. We will denote this sheaf by $\ms{O}_X$ to distinguish it from the classical structure sheaf $\mc{O}_X$ on the variety $X$. Equipped with this sheaf $\ms{O}_X$, the analytification $X^\an$ is a locally ringed space, and $\pi\colon X^\an\to X$ is a morphism of locally ringed spaces. The Berkovich analytification $\pi\colon X^\an\to X$ enjoys GAGA results analogous to those in the classical complex setting (see \cite[\S\S3.4-3.5]{MR1070709}). We will now use these GAGA results to discuss multiplicities associated to finite morphisms of analytic varieties, as in \S2.

\begin{Def} Let $X$ and $Y$ be irreducible varieties over $K$, and let $f\colon X\to Y$ be a finite surjective morphism. Let $\bk{x}\in X^\an$ and $\bk{y} = f(\bk{x})$. Then the \emph{multiplicity} of $f$ at $\bk{x}$ is \[
m_f(\bk{x}) := \dim_{\kappa(\bk{y})}(\ms{O}_{X,\bk{x}}/\mf{m}_\bk{y}\ms{O}_{X,\bk{x}}),\] where as usual $\ms{O}_{X,\bk{x}}$ is viewed as an $\ms{O}_{Y,\bk{y}}$-module via $f$. 
\end{Def}

We now want to compare these multiplicities with those previously defined for the classical morphism $f\colon X\to Y$. To do this comparison, we use the commutative diagram 

\bigskip

\[\begin{psmatrix}[colsep = 1 in, rowsep = .5in]
X^\an & Y^\an\\
X & Y
\psset{arrows=->, nodesep = 3pt}
\ncline{1,1}{1,2}^f
\ncline{2,1}{2,2}^f
\ncline{1,1}{2,1}
\tlput{\pi_X}
\ncline{1,2}{2,2}
\trput{\pi_Y}
\end{psmatrix}\]

\bigskip

\noindent Specifically, we note that if $\bk{y}\in Y^\an$ and $y = \pi_Y(\bk{y})$, then for any $\bk{x}\in f^{-1}(\bk{y})$ we must have that $\pi_X(\bk{x})\in f^{-1}(y)$.

\begin{prop}\label{prop:kernel_multiplicities} Let $X$ and $Y$ be irreducible varieties over $K$, and suppose $f\colon X\to Y$ is a finite surjective morphism. Let $x\in X$ and $f(x) = y$. Let $\bk{y}\in Y^\an$ be such that $\pi_Y(\bk{y}) = y$, and let $\bk{x}_1,\ldots, \bk{x}_r$ be those $f$-preimages of $\bk{y}$ such that $\pi_X(\bk{x}_i) = x$. Then \[
m_f(x) = \sum_{i=1}^r m_f(\bk{x}_i).\] In particular, if $\bk{x}$ is a classical point of $X^\an$ corresponding to $x\in X$, then $m_f(\bk{x}) = m_f(x)$.
\end{prop}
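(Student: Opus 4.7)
The plan is to reduce the statement to a direct computation at the level of finite \'etale-like base changes of coordinate rings, using the GAGA-type local structure theorem for finite analytic morphisms. First I would work affine-locally around $y$: choose an affine open $V\subseteq Y$ containing $y$, and set $U = f^{-1}(V)$, which is affine because $f$ is finite. Write $A = \mc{O}_Y(V)$ and $B = \mc{O}_X(U)$, so that $B$ is a finite $A$-module. The key input from Berkovich's GAGA (\cite{MR1070709}, \S3.4--3.5) is the local structure theorem: for any $\bk{y}\in V^\an$,
\[
B\otimes_A \ms{O}_{Y,\bk{y}} \;\cong\; \prod_{\bk{x}\in f^{-1}(\bk{y})} \ms{O}_{X,\bk{x}},
\]
since $f$ is finite. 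Tensoring further with $\kappa(\bk{y}) = \ms{O}_{Y,\bk{y}}/\mf{m}_{\bk{y}}$ yields the analytic fiber decomposition
\[
B\otimes_A \kappa(\bk{y}) \;\cong\; \prod_{\bk{x}\in f^{-1}(\bk{y})} \ms{O}_{X,\bk{x}}/\mf{m}_{\bk{y}}\ms{O}_{X,\bk{x}}.
\]

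Next I would record the classical counterpart: the standard decomposition $B\otimes_A \kappa(y) \cong \prod_{x'\in f^{-1}(y)} \mc{O}_{X,x'}/\mf{m}_y\mc{O}_{X,x'}$ into a product of finite local $\kappa(y)$-algebras, the $x'$-factor having $\kappa(y)$-dimension $m_f(x')$. The inclusion $\kappa(y)\hookrightarrow \kappa(\bk{y})$, obtained from the local homomorphism $\mc{O}_{Y,y}\to \ms{O}_{Y,\bk{y}}$ associated to the morphism of locally ringed spaces $\pi_Y$, is faithfully flat, so base-changing gives
\[
B\otimes_A \kappa(\bk{y}) \;\cong\; \prod_{x'\in f^{-1}(y)} \bigl(\mc{O}_{X,x'}/\mf{m}_y\mc{O}_{X,x'}\bigr)\otimes_{\kappa(y)} \kappa(\bk{y}).
\]

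The heart of the argument is to match these two decompositions of $B\otimes_A \kappa(\bk{y})$ via $\pi_X$. For any $\bk{x}\in f^{-1}(\bk{y})$, the local homomorphism $\mc{O}_{X,\pi_X(\bk{x})}\to \ms{O}_{X,\bk{x}}$ coming from $\pi_X$ being a morphism of locally ringed spaces factors the localization $B\to \ms{O}_{X,\bk{x}}$ through $\mc{O}_{X,\pi_X(\bk{x})}$. Consequently, under the composite $B\otimes_A \kappa(y)\hookrightarrow B\otimes_A \kappa(\bk{y})\twoheadrightarrow \ms{O}_{X,\bk{x}}/\mf{m}_{\bk{y}}\ms{O}_{X,\bk{x}}$, every classical factor $\mc{O}_{X,x'}/\mf{m}_y\mc{O}_{X,x'}$ with $x'\neq \pi_X(\bk{x})$ maps to zero, because the localization $B\to \ms{O}_{X,\bk{x}}$ kills $\mf{m}_{x'}$-complements of units. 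Comparing idempotents in the two product decompositions, this forces, for each $x'\in f^{-1}(y)$, the identification
\[
\bigl(\mc{O}_{X,x'}/\mf{m}_y\mc{O}_{X,x'}\bigr)\otimes_{\kappa(y)} \kappa(\bk{y}) \;\cong\; \prod_{\bk{x}\,:\,\pi_X(\bk{x})=x'} \ms{O}_{X,\bk{x}}/\mf{m}_{\bk{y}}\ms{O}_{X,\bk{x}}.
\]
Taking $\kappa(\bk{y})$-dimensions and noting that faithfully flat base change preserves dimension gives $m_f(x') = \sum_{\bk{x}\,:\,\pi_X(\bk{x})=x'} m_f(\bk{x})$, which is the desired identity applied to $x$. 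The classical-point case then follows immediately, since a classical point $\bk{x}$ over $x$ is the only analytic preimage of $\bk{x}$ itself with prescribed classical image.

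The main obstacle I expect is the bookkeeping in the third paragraph: rigorously identifying \emph{which} analytic primitive idempotents on the left-hand side correspond to \emph{which} classical idempotents on the right, via $\pi_X$. Everything else is either an immediate application of the Berkovich GAGA-type structure theorem or a routine base change of artinian algebras.
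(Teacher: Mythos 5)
Your proposal is correct and follows essentially the same route as the paper. The paper also reduces to the affine case and compares the classical and analytic fiber decompositions of (a localization of) $B \otimes_A \kappa(\bk{y})$, then takes $\kappa(\bk{y})$-dimensions. The only real difference is in the choice of intermediate step: the paper invokes Proposition 2.6.10 of Berkovich's IHES paper, which gives the localized decomposition
\[
\ms{O}_{Y,\bk{y}}\otimes_{\mc{O}_{Y,y}} \mc{O}_{X,x}\cong \prod_{\pi_X(\bk{x}_i)=x}\ms{O}_{X, \bk{x}_i}\,\times\,\prod_{\pi_X(\bk{x}_i)\neq x} (\ms{O}_{X,\bk{x}_i})_{\mf{p}_x}
\]
directly, so that after tensoring with $\kappa(\bk{y})$ the factors indexed by $\pi_X(\bk{x}_i)\neq x$ vanish and no idempotent bookkeeping is needed. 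You instead work with the unlocalized $B\otimes_A\ms{O}_{Y,\bk{y}}\cong\prod_{\bk{x}}\ms{O}_{X,\bk{x}}$, base-change the classical fiber decomposition along $\kappa(y)\hookrightarrow\kappa(\bk{y})$, and then match primitive idempotents via $\pi_X$. That matching step is the part you flagged as the main obstacle, and it does need a sentence of justification (exactly the one you sketch: for $\bk{x}$ with $\pi_X(\bk{x})=x'$ and $x''\neq x'$, an element of $\mf{p}_{x''}\setminus\mf{p}_{x'}$ becomes a unit in $\ms{O}_{X,\bk{x}}$, so the $x''$-idempotent dies there). In effect, the paper pre-packages that argument inside the cited proposition by localizing at $\mf{p}_x$ first. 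Both proofs buy the same thing; the paper's is just a bit more economical.
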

\begin{proof} As the statement is local, we may assume with no loss of generality that $X$ and $Y$ are affine. Let $\bk{x}_{r+1},\ldots,\bk{x}_s$ be those preimages of $\bk{y}$ with $\pi_X(\bk{x}_i)\neq x$. Using Proposition 2.6.10 of \cite{MR1259429}, one has the isomorphism \[
\ms{O}_{Y,\bk{y}}\otimes_{\mc{O}_{Y,y}} \mc{O}_{X,x}\cong \prod_{i=1}^r\ms{O}_{X, \bk{x}_i}\,\times\,\prod_{i=r+1}^s (\ms{O}_{X,\bk{x}_i})_{\mf{p}_x},\] where $\mf{p}_x$ is the prime ideal in the coordinate ring $K[X]$ of $X$ which corresponds to $x$. If we then tensor this expression over $\ms{O}_{Y,\bk{y}}$ with the residue field $\kappa(\bk{y})$, we see that \[
\kappa(\bk{y})\otimes_{\kappa(y)} (\mc{O}_{X,x}/\mf{m}_y\mc{O}_{X,x})\cong \prod_{i=1}^r (\ms{O}_{X,\bk{x}_i}/\mf{m}_\bk{y}\ms{O}_{X,\bk{x}_i}).\] The $\kappa(\bk{y})$-dimension of the left hand side of this expression is $m_f(x)$, while the $\kappa(\bk{y})$-dimension of the right hand side is $\sum_{i=1}^r m_f(\bk{x}_i)$. This completes the proof.
\end{proof}

\begin{cor} Suppose $f\colon X\to Y$ is a finite surjective flat morphism between irreducible varieties over $K$. Then every point $\bk{y}\in Y^\an$ has $[X:_fY]$ preimages when counted according to their multiplicity. That is, $[X:_fY] = \sum_{f(\bk{x}) = \bk{y}} m_f(\bk{x})$.
\end{cor}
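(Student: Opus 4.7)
The plan is to reduce the analytic preimage count to the scheme-theoretic preimage count established in \hyperref[thm:preimage_count]{Theorem~\ref*{thm:preimage_count}} by using the commutative square relating $\pi_X$ and $\pi_Y$, together with \hyperref[prop:kernel_multiplicities]{Proposition~\ref*{prop:kernel_multiplicities}}.

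First I would fix $\bk{y}\in Y^\an$ and set $y = \pi_Y(\bk{y})\in Y$. The key observation is that any $\bk{x}\in X^\an$ with $f(\bk{x}) = \bk{y}$ satisfies $f(\pi_X(\bk{x})) = \pi_Y(f(\bk{x})) = y$, so the fiber $f^{-1}(\bk{y})\subseteq X^\an$ is partitioned by the map $\pi_X$ into finitely many (possibly empty) pieces indexed by the scheme-theoretic fiber $f^{-1}(y)\subseteq X$. Explicitly,
\[
f^{-1}(\bk{y}) \;=\; \bigsqcup_{x\in f^{-1}(y)} \bigl\{\bk{x}\in X^\an : f(\bk{x})=\bk{y},\ \pi_X(\bk{x})=x\bigr\}.
\]

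Next I would sum multiplicities across this partition. For each $x\in f^{-1}(y)$, \hyperref[prop:kernel_multiplicities]{Proposition~\ref*{prop:kernel_multiplicities}} gives exactly
\[
m_f(x) \;=\; \sum_{\substack{f(\bk{x})=\bk{y}\\ \pi_X(\bk{x})=x}} m_f(\bk{x}).
\]
Summing over $x\in f^{-1}(y)$ and applying \hyperref[thm:preimage_count]{Theorem~\ref*{thm:preimage_count}} to the classical finite flat morphism $f\colon X\to Y$ yields
\[
\sum_{f(\bk{x})=\bk{y}} m_f(\bk{x}) \;=\; \sum_{x\in f^{-1}(y)} m_f(x) \;=\; [X:_f Y],
\]
which is the desired identity.

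There is no real obstacle here; the work has already been done in \hyperref[prop:kernel_multiplicities]{Proposition~\ref*{prop:kernel_multiplicities}}, which uses the GAGA-type decomposition of $\ms{O}_{Y,\bk{y}}\otimes_{\mc{O}_{Y,y}}\mc{O}_{X,x}$ into a product of local rings indexed by analytic points lying over $\bk{y}$ and specializing to $x$. The only thing to check carefully is that every analytic preimage of $\bk{y}$ does indeed sit over some scheme-theoretic preimage of $y$ (immediate from functoriality of $\pi$) and that nothing is double-counted, which is automatic because $\pi_X$ is a function. The finiteness of the sums follows from finiteness of the classical fiber $f^{-1}(y)$ and finiteness of each analytic fiber guaranteed by $f$ being finite.
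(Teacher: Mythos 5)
Your proposal is correct and matches the paper's proof exactly: set $y = \pi_Y(\bk{y})$, partition the analytic fiber over $\bk{y}$ according to the scheme-theoretic preimages $x\in f^{-1}(y)$, apply Proposition~\ref{prop:kernel_multiplicities} to each piece, and then invoke Theorem~\ref{thm:preimage_count}. The paper states this more tersely but the logic is identical.
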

\begin{proof} Let $y = \pi_Y(\bk{y})$. From \hyperref[prop:kernel_multiplicities]{Proposition~\ref*{prop:kernel_multiplicities}} we know that $\sum_{f(\bk{x}) = \bk{y}} m_f(\bk{x}) = \sum_{f(x) = y} m_f(x)$. The corollary then follows from \hyperref[thm:preimage_count]{Theorem~\ref*{thm:preimage_count}}.
\end{proof}

\begin{prop}\label{prop:FRL} Suppose $f\colon X\to Y$ is a finite surjective flat morphism between irreducible varieties over $K$.  Let $V$ be an affinoid domain in $Y^\an$, and let $U = f^{-1}(V)$. Suppose that $U$ has connected components $U_1,\ldots, U_s$. Then there exist integers $n_1,\ldots, n_s\geq 1$ such that every point $\bk{y}\in V$ has exactly $n_i$ preimages in $U_i$, counted according to their multiplicity.
\end{prop}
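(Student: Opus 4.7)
The plan is to show that the function $N_i\colon V \to \N$ defined by
\[
N_i(\bk{y}) := \sum_{\bk{x}\in U_i\cap f^{-1}(\bk{y})} m_f(\bk{x})
\]
is locally constant on $V$, and then to use connectedness to conclude it equals a single integer $n_i$. I will take the convention that $V$ is connected (as is often implicit when working with affinoid domains); otherwise, one argues separately on each connected component of $V$.

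First I would verify that the restriction $f_i := f|_{U_i}\colon U_i\to V$ is again finite and flat. Since $U_i$ is a connected component of the analytic space $U = f^{-1}(V)$, it is both open and closed in $U$, and restriction to an open-closed subspace preserves both finiteness and flatness. Consequently, the direct image $(f_i)_*\ms{O}_{U_i}$ is a coherent, locally free $\ms{O}_V$-module, whose rank is automatically a locally constant function on $V$.

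Next I would identify this rank at a point $\bk{y}\in V$ with $N_i(\bk{y})$. The key input is the analytic-local decomposition of the stalk of $(f_i)_*\ms{O}_{U_i}$ at $\bk{y}$ as a product of stalks $\ms{O}_{X,\bk{x}}$ ranging over $\bk{x}\in U_i \cap f^{-1}(\bk{y})$, which is a special case of Proposition 2.6.10 of \cite{MR1259429} and is exactly the input used in \hyperref[prop:kernel_multiplicities]{Proposition~\ref*{prop:kernel_multiplicities}}. Tensoring over $\ms{O}_{Y,\bk{y}}$ with the residue field $\kappa(\bk{y})$, the $\kappa(\bk{y})$-dimension of the fiber of $(f_i)_*\ms{O}_{U_i}$ at $\bk{y}$ is on one hand the rank, and on the other hand equals $\sum_{\bk{x}} m_f(\bk{x}) = N_i(\bk{y})$. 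Hence $N_i$ is locally constant, and by connectedness is a constant $n_i$.

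Finally, to see $n_i\geq 1$, I would note that $f_i$ is open (finite flat maps are open) and closed (finite maps are proper), so its image $f_i(U_i)$ is clopen in $V$; since $U_i$ is nonempty and $V$ is connected, $f_i$ is surjective, forcing $n_i \geq 1$. The main technical obstacle is the rank-multiplicity identification in the middle step: one has to transfer the classical algebro-geometric argument (rank of a locally free direct image equals the sum of local multiplicities in a fiber) into the analytic setting, where the relevant local rings are the Berkovich stalks $\ms{O}_{X,\bk{x}}$ and $\ms{O}_{Y,\bk{y}}$ rather than their algebraic counterparts. This rests essentially on Berkovich's Proposition 2.6.10 and is the same technical core that drives \hyperref[prop:kernel_multiplicities]{Proposition~\ref*{prop:kernel_multiplicities}}.
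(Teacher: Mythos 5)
Your proof is essentially the paper's: both restrict $f$ to the connected component $U_i$, observe that $(f|_{U_i})_*\ms{O}_{U_i}$ is a coherent locally free $\ms{O}_Y|_V$-module whose fiber dimension at $\bk{y}$ equals $\sum_{\bk{x}\in U_i\cap f^{-1}(\bk{y})}m_f(\bk{x})$ via Berkovich's local decomposition (Proposition 2.6.10 of \cite{MR1259429}), and conclude by local constancy of rank. Your explicit justification that $n_i\geq 1$ (using that $f|_{U_i}$ is open and closed) and your caveat about connectedness of $V$ are small but welcome additions that the paper leaves implicit.
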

\begin{proof} This statement is a higher dimensional analogue of \cite[Proposition 2.1]{MR2578470}.  First note that $U$ is itself an affinoid domain by \cite[Proposition 3.1.7]{MR1070709}, as are the $U_i$ by \cite[Corollary 2.2.7]{MR1070709}. If $\mc{A}_V\to \mc{A}_U\cong \mc{A}_{U_1}\times\cdots\times \mc{A}_{U_s}$ is the corresponding map of affinoid algebras, then $\mc{A}_U$ is a finite Banach $\mc{A}_V$-module since $f$ is finite. It follows immediately that each $\mc{A}_{U_i}$ is a finite Banach $\mc{A}_V$-module via the composite $\mc{A}_V\to \mc{A}_U\to \mc{A}_{U_i}$. Therefore $f|_{U_i}\colon U_i\to V$ is a finite map of $K$-analytic spaces. Since $f$ is flat, so is its analytification $f^\an$ by the GAGA principles in \S3.4 and \S3.5 of  \cite{MR1070709}. It follows that $f|_{U_i}$ is flat for each $i$. Thus  $f_*(\ms{O}_X|_{U_i})$ is a coherent, locally free $\ms{O}_Y|_V$-module of some rank $n_i$. If $\bk{y}\in V$, then \[
f_*(\ms{O}_X|_{U_i})_\bk{y} \cong \bigoplus_{\bk{x}\in f^{-1}(\bk{y})\cap U_i} \ms{O}_{X,\bk{x}},\] and therefore \[
n_i = \dim_{\kappa(\bk{y})} (\kappa(\bk{y})\otimes_{\ms{O}_{Y,\bk{y}}} f_*(\ms{O}_X|_{U_i})_\bk{y}) = \sum_{\bk{x}\in f^{-1}(\bk{y})\cap U_i} \dim_{\kappa(\bk{y})} (\ms{O}_{X,\bk{x}}/\mf{m}_\bk{y}\ms{O}_{X,\bk{x}}) = \sum_{\bk{x}\in f^{-1}(\bk{y})\cap U_i} m_f(\bk{x}).\] This completes the proof.
\end{proof}

Using these results, we are now able to define a pull-back operator on Radon measures, analogous to the pull-back defined in \S3. As was done in that section, we will define the pull-back operator as the adjoint of a push-forward operator on functions. 

\begin{Def} Suppose $f\colon X\to Y$ is a finite surjective flat morphism between irreducible varieties over $K$. If $\varphi\in C^0(X^\an)$, we define $f_*\varphi\colon Y^\an\to \R$ by \[
(f_*\varphi)(\bk{y}) := \sum_{f(\bk{x}) = \bk{y}} m_f(\bk{x})\varphi(\bk{x}).\]
\end{Def}

\begin{prop} Suppose $f\colon X\to Y$ is a finite surjective flat morphism between irreducible varieties over $K$. Then the push-forward $f_*$ defines a linear map $C^0(X^\an)\to C^0(Y^\an)$. If we assume, in addition, that $X$ and $Y$ are projective, so that $X^\an$ and $Y^\an$ are compact,  then $f_*$ is a bounded linear operator between Banach spaces. 
\end{prop}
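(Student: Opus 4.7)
The plan is to verify continuity of $f_*\varphi$ pointwise on $Y^\an$ and then handle linearity and boundedness essentially by inspection.

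For continuity at a given $\bk{y}_0 \in Y^\an$, the first step is to exploit the local structure of finite morphisms of Berkovich spaces in order to find an affinoid neighborhood $V$ of $\bk{y}_0$ such that $f^{-1}(V) = \bigsqcup_{\bk{x}} U_{\bk{x}}$ indexed by the (finite) set of preimages $\bk{x}$ of $\bk{y}_0$, where each $U_{\bk{x}}$ is a connected component (hence an affinoid by Corollary 2.2.7 of \cite{MR1070709}) containing $\bk{x}$ as its unique preimage of $\bk{y}_0$. To arrange this, I would start from any affinoid neighborhood $V_0 \ni \bk{y}_0$, choose pairwise disjoint open neighborhoods $O_{\bk{x}} \subseteq f^{-1}(V_0)$ of each preimage, and observe that $K := f^{-1}(V_0) \setminus \bigcup O_{\bk{x}}$ is closed inside the compact affinoid $f^{-1}(V_0)$, so $f(K)$ is a compact subset of $V_0$ missing $\bk{y}_0$. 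Any affinoid neighborhood $V$ of $\bk{y}_0$ contained in $V_0 \setminus f(K)$ then satisfies $f^{-1}(V) \subseteq \bigcup O_{\bk{x}}$, so each of its components lies in exactly one $O_{\bk{x}}$; and each component meets $f^{-1}(\bk{y}_0)$ because its associated integer $n_i$ from \hyperref[prop:FRL]{Proposition~\ref*{prop:FRL}} is at least one. That same proposition identifies the integer attached to $U_{\bk{x}}$ as $m_f(\bk{x})$, since $\bk{x}$ is its only preimage of $\bk{y}_0$.

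Next, for a given $\varepsilon > 0$, I would use continuity of $\varphi$ to pick, inside each $U_{\bk{x}}$, an open neighborhood $W_{\bk{x}}$ of $\bk{x}$ on which $|\varphi - \varphi(\bk{x})| < \varepsilon$. Since $U_{\bk{x}} \setminus W_{\bk{x}}$ is closed in the compact affinoid $U_{\bk{x}}$, its image under $f$ is compact in $V$ and misses $\bk{y}_0$; removing these finitely many images from $V$ produces an open neighborhood $V' \ni \bk{y}_0$ with the property that every preimage of every $\bk{y} \in V'$ in $U_{\bk{x}}$ lies inside $W_{\bk{x}}$. Using the identity $\sum_{\bk{x}' \in f^{-1}(\bk{y}) \cap U_{\bk{x}}} m_f(\bk{x}') = m_f(\bk{x})$ from \hyperref[prop:FRL]{Proposition~\ref*{prop:FRL}}, I can rewrite
\begin{equation*}
(f_*\varphi)(\bk{y}) - (f_*\varphi)(\bk{y}_0) = \sum_{\bk{x}}\sum_{\bk{x}' \in f^{-1}(\bk{y})\cap U_{\bk{x}}} m_f(\bk{x}')\bigl(\varphi(\bk{x}') - \varphi(\bk{x})\bigr),
\end{equation*}
whose absolute value is bounded by $\varepsilon \,[X :_f Y]$, giving continuity at $\bk{y}_0$.

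Linearity of $f_*$ is immediate from the definition. In the projective case $X^\an$ and $Y^\an$ are compact Hausdorff, so $C^0$ is a Banach space under $\|\cdot\|_\infty$, and the pointwise estimate
\begin{equation*}
|(f_*\varphi)(\bk{y})| \leq \|\varphi\|_\infty \sum_{f(\bk{x})=\bk{y}} m_f(\bk{x}) = [X :_f Y]\,\|\varphi\|_\infty
\end{equation*}
yields $\|f_*\varphi\|_\infty \leq [X :_f Y]\,\|\varphi\|_\infty$, so $f_*$ is bounded. The one substantive step, and where I would expect to spend most of the effort, is the local decomposition of $f^{-1}(V)$ into components each containing a single preimage of $\bk{y}_0$ with the correct multiplicity; once that is in place, the compactness-and-estimation argument is routine.
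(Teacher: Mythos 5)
Your proof is correct and takes essentially the same approach as the paper: isolate a small affinoid neighborhood $V$ of $\bk{y}_0$ whose preimage splits into components each capturing a single preimage of $\bk{y}_0$ with its full multiplicity, invoke Proposition~\ref{prop:FRL} to see that every $\bk{y}$ near $\bk{y}_0$ has the same weighted preimage count in each component, and then bound the oscillation by $\eps\,[X:_fY]$. The paper states ``Let $V$ be a small enough affinoid neighborhood\dots\ By shrinking $V$ if necessary\dots'' without spelling out the shrinking; your version makes those two shrinking steps explicit via the compactness of $f^{-1}(V_0)\smallsetminus\bigcup O_{\bk{x}}$ and of $U_{\bk{x}}\smallsetminus W_{\bk{x}}$, but the decomposition, the key lemma, and the final estimate are identical.
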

\begin{proof} Let $\varphi\in C^0(X^\an)$, and let $\bk{y}\in Y^\an$. Let $V$ be a small enough affinoid neighborhood of $\bk{y}$ such that $f^{-1}(V)$ is a disjoint union of components $U_1,\ldots, U_s$, each containing exactly one preimage $\bk{x}_i\in U_i$ of $\bk{y}$. By shrinking $V$ if necessary, we may assume that the variation of $\varphi$ on $U_i$ is at most $\eps$ for each $i$. According to \hyperref[prop:FRL]{Proposition~\ref*{prop:FRL}}, if $\bk{y}'\in V$, then $\bk{y}'$ has exactly $m_f(\bk{x}_i)$ preimages in $U_i$ when counted according to their multiplicity. Thus \begin{align*}
|(f_*\varphi)(\bk{y}') - (f_*\varphi)(\bk{y})|& \leq \sum_{i=1}^s \left|m_f(\bk{x}_i)\varphi(\bk{x}_i) - {\sum}_{\bk{x}\in f^{-1}(\bk{y}')\cap U_i} m_f(\bk{x})\varphi(\bk{x})\right|\\
&\leq \sum_{i=1}^s \eps m_f(\bk{x}_i) = [X:_f Y]\eps.
\end{align*} This proves that $f_*\varphi$ is continuous.  In the case where $X^\an$ and $Y^\an$ are compact, and thus that $C^0(X^\an)$ and $C^0(Y^\an)$ are Banach spaces, the fact that $f_*$ is bounded is immediate from the easy estimate $|(f_*\varphi)(\bk{y})| \leq \|\varphi\|\sum_{f(\bk{x}) = \bk{y}} m_f(\bk{x}) = [X:_f Y]\|\varphi\|$.
\end{proof}

\begin{Def} Let $f\colon X\to Y$ be a finite surjective flat morphism between irreducible projective varieties over $K$. Let $\mc{M}(X^\an)$ and $\mc{M}(Y^\an)$ denote the space of Radon measures on $X^\an$ and $Y^\an$, respectively. We define the pull-back operator $f^*\colon \mc{M}(Y^\an)\to \mc{M}(X^\an)$ to be the adjoint of $f_*\colon C^0(X^\an)\to C^0(Y^\an)$.
\end{Def}

The following properties of the pull-back $f^*\colon \mc{M}(Y^\an)\to \mc{M}(X^\an)$ are now obvious from the definitions: \begin{enumerate}
\item[$1.$] If $\mu$ is a positive Radon measure on $Y^\an$, then $f^*\mu$ is positive as well. Moreover, if the total mass of $\mu$ is $R$, then $f^*\mu$ has total mass $[X:_f Y]R$.
\item[$2.$] If $\mu$ is any Radon measure on $Y^\an$, then $f_*f^*\mu = [X:_fY] \mu$, where $f_*$ denotes the usual push-forward operation on measures.
\item[$3.$] If $\bk{y}\in Y^\an$ and $\delta_\bk{y}$ is the Dirac probability measure at $\bk{y}$, then $f^*\delta_\bk{y} = \sum_{f(\bk{x}) = \bk{y}} m_f(\bk{x})\delta_\bk{x}$.
\end{enumerate}

\section{Maps of good reduction}

Before being able to prove the equidistribution theorem for maps of good reduction, we first must say  what we mean by a map of good reduction. In section we discuss the notion of reduction for analytic varieties. Recall that we are working with analytic varieties over an algebraically closed complete nonarchimedean field $K$, possibly with trivial absolute value. We denote by $K^\circ$ the valuation ring of $K$, by $\mf{m}_K$ the maximal ideal in $K^\circ$, and by $k$ the residue field on $K$. 

\begin{Def} Let $X$ be an irreducible projective variety over $K$. A \emph{model} of $X$ is a flat, projective scheme $\mc{X}$ over $\Spec K^\circ$ with a specified isomorphism between $X$ and the generic fiber $\mc{X}_K$ of $\mc{X}$.
\end{Def}

In the case when $K$ is equipped with the trivial absolute value, any model $\mc{X}$ of $X$ is simply a variety over $K$ that is isomorphic to $X$, and thus we lose no generality by taking $\mc{X} = X$. When $K$ is equipped with a nontrivial absolute value, there is in general no canonical model of $X$, but some model $\mc{X}$ always exists.

Given a model $\mc{X}$ of $X$, we are able to define a \emph{reduction map} $\bk{red}\colon X^\an\to \mc{X}_k$, where here $\mc{X}_k$ denotes the special fiber of $\mc{X}$. The special fiber $\mc{X}_k$ is a projective variety over $k$, all of whose components have the same dimension as $X$. Let $\bk{x}\in X^\an$, and let $x := \pi(\bk{x}) \in X = \mc{X}_K$. Then $\bk{x}$ defines an absolute value on the residue field $\kappa(x)$ of $\mc{X}_K$ at $x$. Since $\mc{X}$ is projective and hence proper over $\Spec K^\circ$, the valuative criterion of properness gives that the $K$-morphism $\Spec \kappa(x)\to \mc{X}_K$ extends uniquely to a $K^\circ$-morphism $\Spec \kappa(x)^\circ\to \mc{X}$. The special fiber of this morphism is a morphism $\Spec \wt{\kappa}(x)\to \mc{X}_k$, corresponding to a point $\xi$ of $\mc{X}_k$. The reduction of $\bk{x}$ is defined to be $\bk{red}(\bk{x}) = \xi$. It should be noted that the point $\xi$ is also sometimes called the \emph{center} of the valuation associated to $\bk{x}$; we will not use this terminology here.

The reduction map $\bk{red}\colon X^\an\to \mc{X}_k$ is anticontinuous in the sense that the inverse image of a Zariski open set of $\mc{X}_k$ is closed in $X^\an$. It is always surjective, and every generic point of $\mc{X}_k$ has exactly one preimage under $\bk{red}$.

If $K$ is equipped with the trivial absolute value, then $\mc{X}_k = X$, so one obtains a canonical reduction map $\bk{red}\colon X^\an\to X$. The two maps $\pi,\bk{red}\colon X^\an\to X$ are different; indeed, $\bk{red}(\bk{x})$ specializes $\pi(\bk{x})$ for every $\bk{x}\in X^\an$, and one has $\bk{red}(\bk{x}) = \pi(\bk{x})$ if and only if $\bk{x}$ is a classical point of $X^\an$. The unique point $\bk{x}\in X^\an$ whose reduction is the generic point of $X$ is called the \emph{Gauss point} of $X^\an$. It is the classical point corresponding to the trivial absolute value on $K(X)$.

Suppose $X$ and $Y$ are irreducible projective varieties over $K$, and let $\mc{X}$ and $\mc{Y}$ be models of $X$ and $Y$, respectively. Suppose that $F\colon \mc{X}\to \mc{Y}$ is a finite flat $K^\circ$-morphism. Then the generic and special fibers $F_K\colon X\to Y$ and $F_k\colon \mc{X}_k\to \mc{Y}_k$ are finite flat morphisms which are compatible with reduction in the sense that the following diagram commutes.

\bigskip

\[\begin{psmatrix}[colsep = 1 in, rowsep = .5in]
X^\an & Y^\an\\ 
\mc{X}_k & \mc{Y}_k
\psset{arrows=->, nodesep = 3pt}
\ncline{1,1}{1,2}^{F_K}
\ncline{2,1}{2,2}^{F_k}
\ncline{1,1}{2,1}
\tlput{\bk{red}}
\ncline{1,2}{2,2}
\trput{\bk{red}}
\end{psmatrix}\]

\bigskip\smallskip

\noindent If $\mc{X}_k$ and $\mc{Y}_k$ are irreducible, then $[\mc{X}_k :_{F_k} \mc{Y}_k] = [X:_{F_K} Y]$. This compatibility with reduction is the spirit of what we wish to call ``good reduction."

\begin{Def} Let $X$ be an irreducible projective variety over $K$, and let $f\colon X\to X$ be a finite surjective flat morphism. Suppose that there exists a model $\mc{X}$ of $X$ with irreducible special fiber, and a finite flat $K^\circ$-morphism $F\colon \mc{X}\to \mc{X}$ such that $f = F_K$. Then we say that $f$ has \emph{good reduction} with respect to $F$. The map $F_k$ is called the \emph{reduction} of $f$.
\end{Def}

If $K$ is equipped with the trivial absolute value, then every finite surjective flat morphism $f\colon X\to X$  has good reduction simply by taking $F = f$. On the other hand, if $K$ is equipped with a nontrivial absolute value, the notion of good reduction is quite restrictive.

In the case when $X = \pr^r_K$, one sometimes says that a morphism $f\colon \pr^r_K\to \pr^r_K$ has good reduction without making mention of any specific model, as we did in the introduction. Here it is implied that $f$ is induced by a morphism $F\colon \mc{X}\to \mc{X}$, where $\mc{X}$ is the model $\pr^r_{K^\circ}$ of $\pr^r_K$. When $X = \pr^1_K$ the situation is rather simpler, in that every morphism $f\colon \pr^1_K\to \pr^1_K$ of good reduction with respect to some morphism $\mc{X}\to \mc{X}$ is, possibly after conjugating $f$ by an automorphism of $\pr^1_K$, induced from a morphism $\pr^1_{K^\circ}\to \pr^1_{K^\circ}$. One sometimes calls $f\colon \pr^1_K\to \pr^1_K$ a map \emph{potentially good reduction} if it is induced by some morphism $\mc{X}\to \mc{X}$, and a map of \emph{good reduction} if it is induced by a morphism $\pr^1_{K^\circ}\to \pr^1_{K^\circ}$.

Before moving on to the proof of the equidistribution theorem for maps of good reduction, we need a proposition relating the multiplicities of a morphism $f\colon X\to X$ of good reduction to the multiplicities of its reduction.

\begin{prop}\label{prop:reduced_mults} Let $X$ be an irreducible projective variety over $K$, and let $f\colon X\to X$ be a morphism which has good reduction with respect to a morphism $F\colon \mc{X}\to \mc{X}$. Let $\bk{y}\in X^\an$, and let $y = \bk{red}(\bk{y})$. Fix any $x\in \mc{X}_k$ with $F_k(x) = y$, and let $\bk{x}_1,\ldots, \bk{x}_r$ be those $f$-preimages of $\bk{y}$ with $\bk{red}(\bk{x}_i) = x$. Then \[
m_{F_k}(x) = \sum_{i=1}^r m_f(\bk{x}_i).\]
\end{prop}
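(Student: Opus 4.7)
The plan is to mirror the proof of Proposition \ref{prop:kernel_multiplicities} closely, with the reduction map $\bk{red}$ playing the role that the kernel map $\pi$ did there. First I would reduce to an affine setting: since both sides of the claimed identity depend only on the behavior of $F$ in a neighborhood of $y$ and its $F$-preimages in $\mc{X}_k$, I can replace $\mc{X}$ by a sufficiently small affine neighborhood of $y$, using that $F$ is finite to guarantee the preimage remains affine. This lets me work entirely with local rings.

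The key technical input is an analogue of the Berkovich fiber isomorphism from Proposition 2.6.10 of \cite{MR1259429} adapted to the reduction map, namely a ring isomorphism
\[
\ms{O}_{X^\an,\bk{y}}\otimes_{\mc{O}_{\mc{X},y}}\mc{O}_{\mc{X},x}\;\cong\;\prod_{i=1}^{r}\ms{O}_{X^\an,\bk{x}_i}\;\times\;\prod_{j\in J}\bigl(\ms{O}_{X^\an,\bk{x}_j}\bigr)_{\mf{n}_x},
\]
where $J$ indexes the remaining $f$-preimages of $\bk{y}$ (those with $\bk{red}(\bk{x}_j)\neq x$) and $\mf{n}_x$ denotes the maximal ideal of $\mc{O}_{\mc{X},x}$. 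This should follow by transporting Berkovich's result to the formal-analytic side: the space $X^\an$ is the generic fiber of the formal completion $\widehat{\mc{X}}$ along $\mc{X}_k$, and along this identification the reduction map is the natural specialization from analytic generic fiber to scheme-theoretic special fiber; finite flatness of $F$ then yields the above decomposition by the same local-algebraic reasoning used in the classical case.

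Once this isomorphism is established, I would tensor both sides with $\kappa(\bk{y})=\ms{O}_{X^\an,\bk{y}}/\mf{m}_{\bk{y}}$ over $\ms{O}_{X^\an,\bk{y}}$. Since $\bk{red}(\bk{y})=y$, the valuative criterion defining the reduction map gives that the structure map $\mc{O}_{\mc{X},y}\to\ms{O}_{X^\an,\bk{y}}$ sends $\mf{n}_y$ into $\mf{m}_{\bk{y}}$, so on the left the tensor product becomes $\kappa(\bk{y})\otimes_{\kappa(y)}\bigl(\mc{O}_{\mc{X},x}/\mf{n}_y\mc{O}_{\mc{X},x}\bigr)$. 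Because $\mf{m}_K\subseteq\mf{n}_y$, one has $\mc{O}_{\mc{X},x}/\mf{n}_y\mc{O}_{\mc{X},x}=\mc{O}_{\mc{X}_k,x}/\mf{m}_y\mc{O}_{\mc{X}_k,x}$, so the left-hand side has $\kappa(\bk{y})$-dimension $m_{F_k}(x)$. On the right, each localized factor $(\ms{O}_{X^\an,\bk{x}_j})_{\mf{n}_x}$ is killed upon tensoring with $\kappa(\bk{y})$ (by the same mechanism as in the proof of Proposition \ref{prop:kernel_multiplicities}), leaving $\prod_{i=1}^r\ms{O}_{X^\an,\bk{x}_i}/\mf{m}_{\bk{y}}\ms{O}_{X^\an,\bk{x}_i}$, of $\kappa(\bk{y})$-dimension $\sum_{i=1}^r m_f(\bk{x}_i)$. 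Comparing dimensions gives the asserted equality.

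The main obstacle is the second step: making the ring decomposition rigorous. In the classical setting this came straight from Berkovich's Proposition 2.6.10, but here one must pass to the formal scheme $\widehat{\mc{X}}$ and invoke the analogous local decomposition for formal-analytic local rings attached to a finite flat map of admissible formal $K^\circ$-algebras. Once this identification is cleanly in place, the argument above is essentially a verbatim transcription of the proof of Proposition \ref{prop:kernel_multiplicities}.
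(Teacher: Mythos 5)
Your proposal takes a genuinely different route from the paper, and the step you flag as the obstacle is indeed a real gap. You try to build an analogue of Berkovich's Proposition~2.6.10 giving a direct product decomposition of the ring $\ms{O}_{X^\an,\bk{y}}\otimes_{\mc{O}_{\mc{X},y}}\mc{O}_{\mc{X},x}$. But unlike the kernel map $\pi$, the reduction map $\bk{red}$ is \emph{anticontinuous} and is not a morphism of locally ringed spaces, so there is no canonical stalk map $\mc{O}_{\mc{X},y}\to \ms{O}_{X^\an,\bk{y}}$ in the first place; the tensor product at the heart of your argument is not even a priori defined without additional work. More fundamentally, there is no off-the-shelf analogue of Berkovich 2.6.10 for the reduction/specialization map: the fibers of $\bk{red}$ are closed rather than sitting inside affinoid neighborhoods in the way that fibers of $\pi$ do, so the localization-style factor $(\ms{O}_{X^\an,\bk{x}_j})_{\mf{n}_x}$ in your proposed decomposition does not have a clean meaning, and the formal-geometry route you gesture at would require establishing and proving a new local structure theorem rather than transcribing the classical one.

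The paper circumvents all of this by passing to completions. It completes $\mc{O}_{\mc{X},y}$ and $\mc{O}_{\mc{X},x}$ at their maximal ideals, observes that $\hat{\mc{O}}_{\mc{X},x}$ is then finite free over $\hat{\mc{O}}_{\mc{X},y}$ of some rank $R$, and uses that $y=\bk{red}(\bk{y})$ gives a $K^\circ$-algebra homomorphism $\hat{\mc{O}}_{\mc{X},y}\to\ms{H}(\bk{y})^\circ$ into the valuation ring of the completed residue field. Tensoring $\hat{\mc{O}}_{\mc{X},x}$ along this map with the residue field $\wt{\ms{H}}(\bk{y})$ computes $R$ as $m_{F_k}(x)$, while tensoring with the fraction field $\ms{H}(\bk{y})$ computes $R$ as $\sum_i m_f(\bk{x}_i)$. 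The decomposition needed in the second step is then a statement about finite algebras over the complete field $\ms{H}(\bk{y})$, which is standard, rather than a new ``Berkovich 2.6.10 for the specialization map.'' In short: both proofs extract the identity by comparing two local computations of the same quantity, but the paper replaces your hoped-for product decomposition of an analytic-stalk tensor product with a much more tractable rank count over a complete local ring, and this substitution is exactly what fills the gap you left open.
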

\begin{proof} Let $\hat{\mc{O}}_{\mc{X},y}$ and $\hat{\mc{O}}_{\mc{X},x}$ be the completions of $\mc{O}_{\mc{X},y}$ and $\mc{O}_{\mc{X},x}$ with respect to their maximal ideals. Then $\hat{\mc{O}}_{\mc{X},x}$ is a finite free $\hat{\mc{O}}_{\mc{X},y}$-module via $F$, say of rank $R$. Because $y = \bk{red}(\bk{y})$, we have a natural $K^\circ$-homomorphism $\hat{\mc{O}}_{\mc{X},y}\to \ms{H}(\bk{y})^\circ$, where $\ms{H}(\bk{y})$ is the completed residue field at $\bk{y}$. This homomorphism allows us to consider the tensor products \[
\hat{\mc{O}}_{\mc{X},x}\otimes_{\hat{\mc{O}}_{\mc{X},y}}\wt{\ms{H}}(\bk{y})\,\,\,\,\,\mbox{ and }\,\,\,\,\, \hat{\mc{O}}_{\mc{X},x}\otimes_{\hat{\mc{O}}_{\mc{X},\eta}}\ms{H}(\bk{y}),\] which are then vector spaces of dimension $R$ over $\wt{\ms{H}}(\bk{y})$ and $\ms{H}(\bk{y})$, respectively. Since \[
\hat{\mc{O}}_{\mc{X},x}\otimes_{\hat{\mc{O}}_{\mc{X},y}}\wt{\ms{H}}(\bk{y})\cong (\mc{O}_{\mc{X}_k,x}/\mf{m}_y\mc{O}_{\mc{X}_k,x})\otimes_{\kappa(y)} \wt{\ms{H}}(\bk{y}),\] one has $m_{F_k}(x) = R$. On the other hand, \[
\hat{\mc{O}}_{\mc{X},x}\otimes_{\hat{\mc{O}}_{\mc{X},y}} \ms{H}(\bk{y})\cong \bigoplus_{i=1}^r (\ms{O}_{X,\bk{x}_i}/\mf{m}_\bk{y}\ms{O}_{X,\bk{x}_i})\otimes_{\kappa(\bk{y})} \ms{H}(\bk{y}),\] so $R = \sum_{i=1}^r m_f(\bk{x}_i)$. This completes the proof.
\end{proof}

\section{Equidistribution for maps with good reduction}

In this section we will prove the equidistribution theorem for maps with good reduction. The setup for this section is as follows. We fix an irreducible projective variety $X$ over $K$, and a polarized morphism $f\colon (X, L)\to (X,L)$ of algebraic degree $d\geq 2$, which has good reduction with respect to a given polarized morphism $F\colon (\mc{X}, \mc{L})\to (\mc{X}, \mc{L})$ which models $f$. We denote by $\tilde{f}$ the reduction $\tilde{f}\colon (\mc{X}_k, \mc{L}_k)\to (\mc{X}_k,\mc{L}_k)$ of $f$. When $K$ is equipped with the trivial absolute value, one has $(\mc{X}_k,\mc{L}_k) = (X,L)$ and $\tilde{f} = f$, but in the interest of keeping notation uniform we will still write $\tilde{f}$ and $(\mc{X}_k,\mc{L}_k)$.

The idea of the proof of the equidistribution theorem is to apply the results of \S5 to $\tilde{f}$, and then lift these results to $f$ via the reduction semiconjugacy: 

\bigskip

\[\begin{psmatrix}[colsep = 1 in, rowsep = .5in]
X^\an & X^\an\\ 
\mc{X}_k & \mc{X}_k
\psset{arrows=->, nodesep = 3pt}
\ncline{1,1}{1,2}^f
\ncline{2,1}{2,2}^{\tilde{f}}
\ncline{1,1}{2,1}
\tlput{\bk{red}}
\ncline{1,2}{2,2}
\trput{\bk{red}}
\end{psmatrix}\]

\bigskip\smallskip

\noindent In order to make use of the results of \S5, we will additionally need to assume that the reduced map $\tilde{f}\colon (\mc{X}_k, \mc{L}_k)\to(\mc{X}_k, \mc{L}_k)$ satisfies \hyperref[ass:technical]{Assumption~\ref*{ass:technical}}. 

We begin by using the maps $\pi\colon X^\an\to X$ and $\bk{red}\colon X^\an\to \mc{X}_k$ to relate the measure theory of $X^\an$ to the measure theory of $X$ and $\mc{X}_k$. Recall that $\pi$ and $\bk{red}$ are continuous and anticontinuous, respectively, so both of these maps are Borel measurable. If $\mu$ is a Radon measure on $X^\an$, we are therefore able to consider the push-forward measures $\pi_*\mu$ and $\bk{red}_*\mu$. Our first goal is to prove that the push-forward operations $\pi_*$ and $\bk{red}_*$ are compatible with pull-backs in the sense that $\pi_*f^* = f^*\pi_*$ and $\bk{red}_*f^* = \tilde{f}^*\bk{red}_*$.

\begin{lem}\label{lem:approx}$\,$\begin{enumerate} 
\item[$1.$] Let $V\subset \mc{X}_k$ be a nonempty proper irreducible closed subset, and let $U = \bk{red}^{-1}(V)$. There is an increasing sequence of nonnegative continuous functions $\varphi_n\colon X^\an\to \R$ which converge pointwise to the characteristic function $\chi_{U}$.
\item[$2.$] Let $V\subseteq X$ be a nonempty proper irreducible Zariski closed set, and let $E = \pi^{-1}(V)$. There is a decreasing sequence of nonnegative continuous functions $\psi_n\colon X^\an\to \R$ which converge pointwise to the characteristic function $\chi_E$.
\end{enumerate}
\end{lem}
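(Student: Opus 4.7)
The strategy for both parts is to reduce to the following elementary observation: given any continuous non-negative $h\colon X^\an \to [0,\infty)$, the sequences
\[
\varphi_n := \min\!\bigl\{1,\,\max\{0,\,n(1-h)\}\bigr\} \quad \text{and} \quad \psi_n := \max\{0,\,1 - nh\}
\]
are continuous and non-negative, are monotone in $n$, and converge pointwise to $\chi_{\{h<1\}}$ (increasingly) and to $\chi_{\{h=0\}}$ (decreasingly) respectively. So the problem reduces to producing two continuous non-negative functions on $X^\an$: an $h_U$ with $\{h_U < 1\} = U$, and an $h_E$ with $\{h_E = 0\} = E$.

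For part (2) this is a routine partition-of-unity construction. Cover $X$ by finitely many affine opens $W_1,\dots,W_N$; on each $W_j$ the Zariski closed set $V \cap W_j$ is cut out by regular functions $f_{j,1},\dots,f_{j,r_j}\in\mc{O}(W_j)$, and $g_j := \min\{1,\max_i|f_{j,i}|\}$ is a bounded continuous function on $W_j^\an$ vanishing exactly on $E\cap W_j^\an$. Since $X^\an$ is compact Hausdorff, we may choose a continuous partition of unity $\{\rho_j\}$ on $X^\an$ subordinate to the open cover $\{W_j^\an\}$ with $\supp\rho_j\Subset W_j^\an$. Then $h_E := \sum_j \rho_j g_j$ (extended by zero outside each $W_j^\an$) is continuous and non-negative on $X^\an$, and because every $\bk{x}$ has at least one $j$ with $\rho_j(\bk{x}) > 0$, one checks $h_E(\bk{x}) = 0$ if and only if $\bk{x}\in E$.

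For part (1) the naive partition-of-unity approach breaks down: the natural cover of $X^\an$ coming from a model-affine cover $\{\Spec A_j\}$ of $\mc{X}$ is either closed in $X^\an$ (if taken via $\bk{red}$, by anticontinuity) or not stable under specialization (if taken via $\pi$), so local regular functions do not reliably encode membership in $U$. Instead, we exploit that $\mc{X}$ is projective over $K^\circ$ to fix a closed immersion $\mc{X}\hookrightarrow\pr^N_{K^\circ}$. The image of $V$ in $\pr^N_k$ is cut out by finitely many homogeneous polynomials $\bar F_1,\dots,\bar F_r\in k[x_0,\dots,x_N]$; lift each to a homogeneous $F_i\in K^\circ[x_0,\dots,x_N]$ of the same degree, rescaled so that at least one coefficient has absolute value $1$. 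Then
\[
h_U(\bk{x}) := \max_i \frac{|F_i(\bk{x})|}{\max_j|x_j(\bk{x})|^{\deg F_i}}
\]
defines a continuous function on $\pr^{N,\an}_K$. Using the valuative-criterion description of $\bk{red}$ recalled in \S7 together with the homogeneity of each $F_i$, one verifies that $\bar F_i(\bk{red}(\bk{x})) = 0$ if and only if $|F_i(\bk{x})| < \max_j |x_j(\bk{x})|^{\deg F_i}$; combining across $i$ gives $U = \{h_U|_{X^\an} < 1\}$.

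The principal obstacle is precisely this last step in (1): identifying the sublevel set $\{h_U < 1\}$ with $\bk{red}^{-1}(V)$. Once this reduction-theoretic calculation is in hand, the rest of the argument — the construction of $\varphi_n$ and $\psi_n$, their continuity, non-negativity, monotonicity, and pointwise limits — is elementary real analysis on the compact Hausdorff space $X^\an$.
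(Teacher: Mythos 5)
Your proof is correct, and it takes a genuinely different route in both parts. The paper's proof covers $\mc{X}$ (resp.\ $X$) by affine opens, defines on each piece $\bk{red}^{-1}(\mc{W}_{\alpha,k})$ (resp.\ $\pi^{-1}(U_\alpha)$) a local function $h_\alpha$ built from the moduli of a lifted generating set of the ideal of $V$, and then glues; the gluing rests on the assertion that $h_\alpha$ is independent of the choice of generators. You sidestep the gluing entirely. In part (2), the partition-of-unity construction on the compact Hausdorff space $X^\an$ lets you assemble local indicator-like functions with no compatibility condition at all; what you pay is an appeal to normality of $X^\an$. In part (1), where a partition of unity is unavailable for the reason you give (the $\bk{red}^{-1}(\mc{W}_{\alpha,k})$ are closed, not open), you instead produce $h_U$ globally from a projective embedding $\mc{X}\hookrightarrow\pr^N_{K^\circ}$ as the homogeneous expression $\max_i |F_i|/\max_j|x_j|^{\deg F_i}$; the identification $\{h_U|_{X^\an}<1\}=\bk{red}^{-1}(V)$ is verified chart-by-chart, which is exactly the computation the paper carries out locally before gluing. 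Both approaches yield an $h$ with $0\le h\le1$ and the right sublevel/level set, and the final step is the same piece of elementary real analysis — the paper uses $(1-h)^{1/n}$ and $1-h^{1/n}$ where you use clamped linear functions in $h$, but either choice produces the required monotone sequences. Your version is arguably cleaner in that it never needs the ``independence of the choice of $a_i$'' claim; the cost is that part (1) now explicitly uses the projectivity of the model rather than just a finite affine cover.
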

\begin{proof} (1) Let $\{\mc{W}_\alpha\}$ be a finite affine open cover of $\mc{X}$, say $\mc{W}_\alpha = \Spec A_\alpha$. For each index $\alpha$, the set $\ms{W}_\alpha := \bk{red}^{-1}(\mc{W}_{\alpha, k})$ is a closed subset of $X^\an$ (it is, in fact, an affinoid domain). Fix an $\alpha$ such that $\mc{W}_{\alpha, k}$ intersects $V$, and let $a_1,\ldots, a_r\in A_\alpha$ be elements whose images $\ol{a}_1,\ldots, \ol{a}_r$ in the reduction $\ol{A} = A\otimes_{K^\circ} k$ generate the prime ideal $\mf{p}_V$ of $V$ in $\mc{W}_{\alpha, k}$. Then $\bk{x}\in \ms{W}_\alpha$ lies in $U$ if and only if $|a_i(\bk{x})|<1$ for each $i$. Define $h_\alpha\colon \ms{W}_\alpha\to \R$ by $h_\alpha(\bk{x}) := \max_i |a_i(\bk{x})|$. This $h_\alpha$ is continuous, and satisfies $h_\alpha(\bk{x})\leq 1$, with equality if and only if $\bk{x}\notin U$. Moreover, $h_\alpha$ is independent of the choice of the $a_i$. In this way we define $h_\alpha$ for each index $\alpha$. One has $h_\alpha  = h_\beta$ on $\ms{W}_\alpha\cap \ms{W}_\beta$, and hence the $h_\alpha$ can be glued together to give a continuous function $h\colon X^\an\to \R$ with the property that $0\leq h(\bk{x})\leq 1$ for all $\bk{x}$, with $h(\bk{x})<1$ if and only if $\bk{x}\in U$. We can then define $\varphi_n := (1 - h)^{1/n}$.

(2) Let $\{U_\alpha\}$ be a finite affine open cover of $X$, say with $U_\alpha  = \Spec A_\alpha$. For each index $\alpha$, the set $U^\an_\alpha = \pi^{-1}(U_\alpha)$ is an open subset of $X^\an$. Fix an $\alpha$ such that $U_\alpha$ intersects $V$, and let $a_1,\ldots, a_r\in A_\alpha$ be generators of the prime ideal $\mf{p}_V$ of $V$ in $U_\alpha$. Then $\bk{x}\in U^\an_\alpha$ belongs to $E$ if and only if $|a_i(\bk{x})| = 0$ for each $i$. Let $h_\alpha\colon U^\an_\alpha\to \R$ be the function $h_\alpha(\bk{x}) := \min\{1, \max_i |a_i(\bk{x})|\}$. This $h_\alpha$ is continuous, and satisfies $h_\alpha(\bk{x})\geq 0$, with equality if and only if $\bk{x}\in E$. Moreover, $h_\alpha$ is independent of the choice of the $a_i$. In this way we define $h_\alpha$ for each index $\alpha$. One has $h_\alpha = h_\beta$ on $U_\alpha^\an\cap U_\beta^\an$, and hence the $h_\alpha$ can be glued together to give a continuous function $h\colon X^\an \to \R$ with the property that $0\leq h(\bk{x})\leq 1$ for all $\bk{x}$, with $h(\bk{x}) = 0$ if and only if $\bk{x}\in E$. We can then define $\psi_n = 1 - h^{1/n}$.
\end{proof}

\begin{prop}\label{prop:compatibility} Let $\mu$ be a Radon measure on $X^\an$. Then $\pi_*$ and $\bk{red}_*$ are compatible with pull-backs in the sense that \begin{enumerate}
\item[$1.$] $\bk{red}_*f^*\mu = \tilde{f}^*\bk{red}_*\mu$, and
\item[$2.$] $\pi_*f^*\mu = f^*\pi_*\mu$.
\end{enumerate}
\end{prop}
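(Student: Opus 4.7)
The plan is to verify both identities by testing the resulting signed Borel measures on enough Zariski closed subsets of the classical target variety. Both sides of (1) are signed measures on $\mc{X}_k$, and by \hyperref[thm:measures]{Theorem~\ref*{thm:measures}} together with the $\chi_E$-based argument used at the end of \hyperref[prop:pullback]{Proposition~\ref*{prop:pullback}}, it is enough to check they agree on each irreducible closed $V\subseteq \mc{X}_k$. The case $V = \mc{X}_k$ is immediate from total mass, so I assume $V$ is proper. Then $\bk{red}^{-1}(V)$ is open in $X^\an$, and \hyperref[lem:approx]{Lemma~\ref*{lem:approx}(1)} provides continuous $\varphi_n$ with $0\leq \varphi_n\leq 1$ and $\varphi_n\uparrow \chi_{\bk{red}^{-1}(V)}$.

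Using the adjoint definition of $f^*$ on Radon measures and dominated convergence (valid for signed Radon $\mu$ via its Jordan decomposition, with the bound $|f_*\varphi_n|\leq d^m = [X:_fX]$), I rewrite
\[
(\bk{red}_* f^*\mu)(V) \;=\; \lim_n\int \varphi_n\, d(f^*\mu) \;=\; \lim_n\int f_*\varphi_n\, d\mu.
\]
The crux is to compute the pointwise limit of $f_*\varphi_n$. At any $\bk{y}\in X^\an$,
\[
(f_*\varphi_n)(\bk{y}) \;=\; \sum_{f(\bk{x}) = \bk{y}} m_f(\bk{x})\varphi_n(\bk{x}) \;\longrightarrow\; \sum_{\substack{f(\bk{x}) = \bk{y}\\ \bk{red}(\bk{x})\in V}} m_f(\bk{x}).
\]
Grouping the analytic preimages $\bk{x}$ of $\bk{y}$ according to their reduction $z = \bk{red}(\bk{x})\in V$ and invoking \hyperref[prop:reduced_mults]{Proposition~\ref*{prop:reduced_mults}} on each group, the right-hand side collapses to $\sum_{z\in V,\,\tilde{f}(z) = \bk{red}(\bk{y})} m_{\tilde{f}}(z) = (\tilde{f}_*\chi_V)(\bk{red}(\bk{y}))$. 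A further application of dominated convergence and the adjoint definition of $\tilde{f}^*$ then give $\lim_n\int f_*\varphi_n\, d\mu = \int \tilde{f}_*\chi_V\, d(\bk{red}_*\mu) = (\tilde{f}^*\bk{red}_*\mu)(V)$, completing (1).

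For (2) I will run the same argument with $\pi$ replacing $\bk{red}$. For an irreducible proper closed $V\subseteq X$, the set $E := \pi^{-1}(V)$ is closed in $X^\an$ (since $\pi$ is continuous), so \hyperref[lem:approx]{Lemma~\ref*{lem:approx}(2)} yields continuous $\psi_n\downarrow \chi_E$. The same adjoint and convergence manipulations reduce the desired identity to the pointwise multiplicity equality
\[
\sum_{\substack{f(\bk{x}) = \bk{y}\\ \pi(\bk{x})\in V}} m_f(\bk{x}) \;=\; \sum_{\substack{f(x) = \pi(\bk{y})\\ x\in V}} m_f(x) \;=\; (f_*\chi_V)(\pi(\bk{y})),
\]
which is \hyperref[prop:kernel_multiplicities]{Proposition~\ref*{prop:kernel_multiplicities}} applied fibrewise over $\pi(\bk{y})$ (with $f_*$ here the push-forward of Section~3).

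The main obstacle is really just bookkeeping rather than any new idea: one must shuttle cleanly between the Radon-measure framework of Section~6 on $X^\an$ and the $\ol{SC}$-duality framework of Section~3 on $\mc{X}_k$ and $X$, and justify the limit interchanges when $\mu$ is signed. Both are handled uniformly, the first by restricting the comparison to Zariski closed test sets on which the two theories coincide, and the second by Jordan decomposition of $\mu$ together with the uniform bound $d^m$ on all functions of the form $f_*\varphi$ with $|\varphi|\leq 1$.
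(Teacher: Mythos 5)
Your proof is correct and follows essentially the same approach as the paper's: test on irreducible closed sets, approximate $\chi_{\bk{red}^{-1}(V)}$ via \hyperref[lem:approx]{Lemma~\ref*{lem:approx}}, push through the adjoint definitions, and reduce to the pointwise multiplicity identity supplied by \hyperref[prop:reduced_mults]{Proposition~\ref*{prop:reduced_mults}} (resp.\ \hyperref[prop:kernel_multiplicities]{Proposition~\ref*{prop:kernel_multiplicities}}). Your added remarks on Jordan decomposition, the uniform bound $|f_*\varphi_n|\leq d^m$, and the case $V = \mc{X}_k$ are welcome clarifications the paper leaves implicit, but they do not constitute a different route.
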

\begin{proof} (1) It suffices to check that $(\bk{red}_*f^*\mu)(V) = (\tilde{f}^*\bk{red}_*\mu)(V)$ for every irreducible closed set $V\subset \mf{X}_0$ by \cite[Lemma 2.7]{me}. Choose an increasing sequence $\varphi_n\colon X^\an\to \R$ of non-negative continuous functions converging pointwise to $\chi_{\bk{red}^{-1}(V)}$. Then \[
(\bk{red}_*f^*\mu)(V) = (f^*\mu)(\bk{red}^{-1}(V)) = \lim_{n\to \infty} \int \varphi_n\,df^*\mu = \lim_{n\to \infty} \int f_*\varphi_n\,d\mu = \int f_*\bk{red}^*\chi_V\,d\mu.\] On the other hand, we have \[
(\tilde{f}^*\bk{red}_*\mu)(V) = \int \tilde{f}_*\chi_V\,d\bk{red}_*\mu = \int \bk{red}^*\tilde{f}_*\chi_V\,d\mu.\] Thus it suffices to show that $f_*\bk{red}^*\chi_V = \bk{red}^*\tilde{f}_*\chi_V$. If $\bk{y}\in X^\an$, then \begin{align*}
(f_*\bk{red}^*\chi_V)(\bk{y}) & = \sum_{f(\bk{x}) = \bk{y}} m_f(\bk{x})\chi_V(\bk{red}(\bk{x})) = \sum_{\tilde{f}(x) = \bk{red}(\bk{y})} m_{\tilde{f}}(x)\chi_V(x) = (\bk{red}^*\tilde{f}_*\chi_V)(\bk{y}),
\end{align*} where the second equality is a consequence of \hyperref[prop:reduced_mults]{Proposition~\ref*{prop:reduced_mults}}. This proves (1). The proof of (2) is similar, except one uses \hyperref[prop:kernel_multiplicities]{Proposition~\ref*{prop:kernel_multiplicities}} instead of \hyperref[prop:reduced_mults]{Proposition~\ref*{prop:reduced_mults}}.
\end{proof}

Unfortunately, the push-forward operations $\pi_*$ and $\bk{red}_*$ on Radon measures are \emph{not} weakly continuous. Specifically, if $\mu_n$ is a sequence of Radon measures on $X^\an$ which converge weakly to a measure $\mu$, then it is not necessarily the case that $\pi_*\mu_n$ converges weakly to $\pi_*\mu$, or that $\bk{red}_*\mu_n$ converges weakly to $\bk{red}_*\mu$. Indeed, it is not even necessarily the case that $\pi_*\mu_n$ and $\bk{red}_*\mu_n$ converge weakly to anything. The reason for this difficulty is that the weak topology for measures on $X$ and $\mc{X}_k$ is defined in terms of semicontinuous functions, whereas the weak topology for Radon measures on $X^\an$ is defined in terms of continuous functions. The next proposition explores this phenomenon.

\begin{prop}\label{prop:limit_measures} Let $\mu_n$ be a sequence of Radon probability measures on $X^\an$ which converges weakly to a measure $\mu$.\begin{enumerate}
\item[$1.$] Suppose the measures $\nu_n := \bk{red}_*\mu_n$ converge weakly to a measure $\nu$. Then one has the inequality $\nu(V)\geq (\bk{red}_*\mu)(V)$ for all irreducible closed subsets $V\subseteq \mc{X}_k$.
\item[$2.$] Suppose the measures $\nu_n := \pi_*\mu_n$ converge weakly to a measure $\nu$. Then one has the inequality $\nu(V)\leq (\pi_*\mu)(V)$ for all irreducible closed subsets $V\subseteq X$.
\end{enumerate}
\end{prop}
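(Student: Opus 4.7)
The plan is to deduce both inequalities from the approximation \hyperref[lem:approx]{Lemma~\ref*{lem:approx}}, together with the different notions of weak convergence for measures on $X^\an$ (continuous test functions) versus measures on classical varieties (pointwise convergence on closed sets, per \hyperref[thm:measures]{Theorem~\ref*{thm:measures}}). The key structural point is that $\bk{red}^{-1}(V)$ is open in $X^\an$ while $\pi^{-1}(V)$ is closed in $X^\an$, and this asymmetry is exactly what forces the inequalities to go in opposite directions in (1) and (2).

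For Part (1), fix an irreducible closed $V \subseteq \mc{X}_k$ and set $U := \bk{red}^{-1}(V)$, which is open in $X^\an$. By \hyperref[lem:approx]{Lemma~\ref*{lem:approx}(1)}, choose an increasing sequence of nonnegative continuous functions $\varphi_k \nearrow \chi_U$. Since each $\varphi_k$ is continuous and $\mu_n \to \mu$ weakly on $X^\an$, for every fixed $k$ we have
\[
\nu(V) \;=\; \lim_{n\to\infty}\nu_n(V) \;=\; \lim_{n\to\infty}\mu_n(U) \;\geq\; \lim_{n\to\infty}\int \varphi_k\,d\mu_n \;=\; \int \varphi_k\,d\mu,
\]
where the first equality uses that $\nu_n \to \nu$ weakly on $\mc{X}_k$ in the sense of \hyperref[thm:measures]{Theorem~\ref*{thm:measures}}. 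Letting $k\to\infty$ and applying monotone convergence yields $\nu(V)\geq \mu(U)=(\bk{red}_*\mu)(V)$.

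For Part (2), fix an irreducible closed $V \subseteq X$ and set $E := \pi^{-1}(V)$, which is closed in $X^\an$ since $\pi$ is continuous. By \hyperref[lem:approx]{Lemma~\ref*{lem:approx}(2)}, choose a decreasing sequence of nonnegative continuous functions $\psi_k \searrow \chi_E$. Then for every fixed $k$,
\[
\nu(V) \;=\; \lim_{n\to\infty}\nu_n(V) \;=\; \lim_{n\to\infty}\mu_n(E) \;\leq\; \lim_{n\to\infty}\int \psi_k\,d\mu_n \;=\; \int \psi_k\,d\mu.
\]
Letting $k\to\infty$ and applying dominated convergence (with dominating function $\psi_1$, which is integrable because $\mu$ is finite and $\psi_1$ is bounded) gives $\nu(V)\leq \mu(E)=(\pi_*\mu)(V)$.

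The only nontrivial ingredients are the approximation lemma—already established—and the compatibility of weak convergence in the two settings, which reduces to the dichotomy ``open versus closed inverse image.'' I do not anticipate any real obstacle; the main thing to be careful about is not to conflate the two notions of weak convergence, and to make sure the monotone/dominated convergence step is legitimate. In particular, one should note that the inequalities cannot be upgraded to equalities: the functions $\varphi_k$ do not dominate $\chi_U$ from above, and the $\psi_k$ do not lie below $\chi_E$, so there is genuine loss in each direction, reflecting the fact that $\pi_*$ and $\bk{red}_*$ are not weakly continuous.
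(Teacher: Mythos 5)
Your proof is correct and takes essentially the same route as the paper: apply \hyperref[lem:approx]{Lemma~\ref*{lem:approx}} to get the monotone approximating sequence, use the characterization of weak convergence on classical varieties via evaluation on closed sets (\hyperref[thm:measures]{Theorem~\ref*{thm:measures}}) on one side and weak convergence of Radon measures against continuous test functions on the other, then pass to the limit in the approximating sequence. The paper arranges the inequality chain starting from $(\bk{red}_*\mu)(V)$ and using an $\eps$-index rather than a clean monotone convergence step, but the ingredients and the logical content are identical.
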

\begin{proof} (1) Fix an irreducible closed subset $V\subseteq \mc{X}_k$, and let $\varphi_n\colon X^\an \to \R$ be an increasing sequence of nonnegative continuous functions converging pointwise to $\chi_{\bk{red}^{-1}(V)}$. Given an $\eps>0$ and an index $N = N(\eps)$ large enough, one then has \begin{align*}
(\bk{red}_*\mu)(V) & = \lim_{n\to \infty} \int \varphi_n\,d\mu \leq \eps + \int \varphi_N\,d\mu = \eps + \lim_{m\to \infty}\int \varphi_N\,d\mu_m\\
& \leq \eps + \liminf_{m\to \infty} \int\chi_{\bk{red}^{-1}(V)}\,d\mu_m = \eps + \liminf_{m\to \infty}\nu_m(V) = \eps + \nu(V).
\end{align*} Letting $\eps\to 0$ gives $(\bk{red}_*\mu)(V) \leq \nu(V)$, as desired. The proof of (2) is similar.
\end{proof}

We are now in a position to prove our main equidistribution of preimages theorem for maps of good reduction. \hyperref[thmA]{Theorem~\ref*{thmA}} is a special case of the following.

\begin{thm}[Equidistribution]\label{equid} Let $X$ be an irreducible projective variety over $K$ of dimension $m$, and let $f\colon (X, L)\to (X,L)$ be a polarized morphism of algebraic degree $d\geq 2$. Suppose that $f$ has good reduction with respect to a morphism $F\colon (\mc{X}, \mc{L})\to (\mc{X}, \mc{L})$. Finally, assume that the reduction $\tilde{f}\colon (\mc{X}_k, \mc{L}_k)\to (\mc{X}_k, \mc{L}_k)$ of $f$ satisfies \hyperref[ass:technical]{Assumption~\ref*{ass:technical}}. Let $\ms{E}$ be the exceptional set of $\tilde{f}$. If $\mu$ is a Radon probability measure on $X^\an$ which gives no mass to $\bk{red}^{-1}(\ms{E})$, then the normalized pull-backs $d^{-mn}f^{n*}\mu$ converge weakly to the Dirac probability measure $\delta_\bk{x}$ supported at the unique point $\bk{x}\in X^\an$ whose reduction is the generic point of $\mc{X}_k$.
\end{thm}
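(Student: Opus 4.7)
The plan is to lift the classical equidistribution theorem of \hyperref[cor:classical_equid]{Corollary~\ref*{cor:classical_equid}} to the Berkovich setting by pushing everything forward along the reduction map $\bk{red}\colon X^\an\to \mc{X}_k$. Since $X^\an$ is compact Hausdorff, the space of Radon probability measures on $X^\an$ is weak-$*$ compact, so it suffices to show that every weak cluster point $\nu_\infty$ of the sequence $\mu_n := d^{-mn}f^{n*}\mu$ equals $\delta_\bk{x}$; the sequence will then converge to $\delta_\bk{x}$.

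First I transfer the problem to the special fiber. By \hyperref[prop:compatibility]{Proposition~\ref*{prop:compatibility}(1)} one has the identity $\bk{red}_*\mu_n = d^{-mn}\tilde{f}^{n*}\bk{red}_*\mu$, and flatness of $\mc{X}$ over $\Spec K^\circ$ ensures that $\dim\mc{X}_k = m$ and that $\tilde{f}$ is a polarized morphism of algebraic degree $d$. The hypothesis on $\mu$ translates immediately into $(\bk{red}_*\mu)(\ms{E}) = 0$, so \hyperref[cor:classical_equid]{Corollary~\ref*{cor:classical_equid}}, applied to $\tilde{f}$ (which by assumption satisfies \hyperref[ass:technical]{Assumption~\ref*{ass:technical}}), yields $\bk{red}_*\mu_n \to \delta_\eta$ weakly in $\mc{M}(\mc{X}_k)$, where $\eta$ denotes the generic point of $\mc{X}_k$.

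Next I extract information about $\nu_\infty$ from these two convergences. By \hyperref[prop:limit_measures]{Proposition~\ref*{prop:limit_measures}(1)}, for every irreducible proper closed subset $V\subsetneq \mc{X}_k$ one has $(\bk{red}_*\nu_\infty)(V) \leq \delta_\eta(V) = 0$. The unique decomposition in \hyperref[thm:measures]{Theorem~\ref*{thm:measures}(1)} then forces $\bk{red}_*\nu_\infty$ to be concentrated on $\{\eta\}$, and since $\nu_\infty$ is a probability measure we must have $\bk{red}_*\nu_\infty = \delta_\eta$.

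It remains to show that $\nu_\infty$ is itself the Dirac mass at the unique point $\bk{x}\in \bk{red}^{-1}(\eta)$ (uniqueness of $\bk{x}$ is the general fact recalled in \S7). Anticontinuity of $\bk{red}$ means that for every proper closed $V\subsetneq \mc{X}_k$ the set $\bk{red}^{-1}(V)$ is open in $X^\an$, disjoint from $\bk{x}$, and $\nu_\infty(\bk{red}^{-1}(V)) = (\bk{red}_*\nu_\infty)(V) = 0$. Any compact $K\subseteq X^\an\smallsetminus\{\bk{x}\}$ is covered by finitely many such opens, and the union is itself of the form $\bk{red}^{-1}(V_1\cup\cdots\cup V_N)$ for a proper closed subset of $\mc{X}_k$, whence $\nu_\infty(K) = 0$; inner regularity then yields $\nu_\infty = \delta_\bk{x}$. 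The main technical obstacle is the asymmetry in \hyperref[prop:limit_measures]{Proposition~\ref*{prop:limit_measures}}—that $\bk{red}_*$ fails to be weakly continuous—but the inequality it does supply is in precisely the direction needed for this pinning-down argument to succeed.
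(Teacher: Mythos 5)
Your proposal is correct and follows the same strategy as the paper for the bulk of the argument: extract a weak cluster point, push forward along $\bk{red}$, apply \hyperref[prop:compatibility]{Proposition~\ref*{prop:compatibility}} and \hyperref[cor:classical_equid]{Corollary~\ref*{cor:classical_equid}} to identify $\bk{red}_*\mu_n\to\delta_\eta$, and then invoke the one-sided inequality of \hyperref[prop:limit_measures]{Proposition~\ref*{prop:limit_measures}(1)} to conclude that $\bk{red}_*\alpha$ charges no proper closed subset of $\mc{X}_k$. Where you diverge is in the very last step, in which one must show that a Radon probability measure $\alpha$ on $X^\an$ with $(\bk{red}_*\alpha)(V)=0$ for every proper closed $V\subsetneq\mc{X}_k$ must equal $\delta_\bk{x}$. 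The paper does this via a Stone--Weierstrass argument: it introduces the subalgebra $\mc{A}\subseteq C^0(X^\an)$ of functions constant outside some $\bk{red}^{-1}(V)$, checks using the explicit functions of \hyperref[lem:approx]{Lemma~\ref*{lem:approx}} that $\mc{A}$ separates points, and verifies $\int\varphi\,d\alpha=\varphi(\bk{x})$ for $\varphi\in\mc{A}$. You instead argue that the sets $\bk{red}^{-1}(V)$, $V\subsetneq\mc{X}_k$ proper closed, form an open cover of $X^\an\smallsetminus\{\bk{x}\}$ (each is open by anticontinuity, and every $\bk{y}\neq\bk{x}$ reduces to a non-generic point since $\bk{red}^{-1}(\eta)=\{\bk{x}\}$), so any compact $K\subseteq X^\an\smallsetminus\{\bk{x}\}$ lies in a single $\bk{red}^{-1}(V_1\cup\cdots\cup V_N)$ with $V_1\cup\cdots\cup V_N$ still proper (irreducibility of $\mc{X}_k$), hence $\alpha(K)=0$, and inner regularity finishes. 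Both arguments are sound; yours is a bit more elementary, sidestepping the need for the separating-functions machinery and Stone--Weierstrass, and it more transparently shows where compactness and anticontinuity enter. The paper's approach has the virtue of being reused with a minor modification in the trivially-valued refinement that follows, where one needs to control functions constant off sets of the form $\pi^{-1}(U)\cup\bk{red}^{-1}(W)$ and the algebraic formulation adapts more cleanly.
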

\begin{proof} Let $\mu_n = d^{-mn}f^{n*}\mu$ for each $n\geq 1$. It suffices to show that every weakly convergent subsequence of $\{\mu_n\}$ converges to $\delta_\bk{x}$. We therefore fix a weakly convergent subsequence $\mu_{n_i}$, converging to some measure $\alpha$. Let $\nu = \bk{red}_*\mu$ and $\nu_n = \bk{red}_*\mu_n$ for each $n$. We then know from \hyperref[prop:compatibility]{Proposition~\ref*{prop:compatibility}} that $\nu_n = d^{-mn}\tilde{f}^{n*}\nu$ for each $n$. The assumption that $\mu$ does not give mass to $\bk{red}^{-1}(\ms{E})$ is equivalent $\nu$ not giving mass to $\ms{E}$. It then follows from \hyperref[cor:classical_equid]{Corollary~\ref*{cor:classical_equid}} that the sequence $\nu_n$ converges weakly to the Dirac probability measure at the generic point of $\mc{X}_k$. From \hyperref[prop:limit_measures]{Proposition~\ref*{prop:limit_measures}(1)} we see that $(\bk{red}_*\alpha)(V) = 0$ for all proper closed subsets $V\subsetneq \mc{X}_k$. We will use this property of $\alpha$ to conclude that $\alpha = \delta_\bk{x}$.

Let $\mc{A}\subseteq C^0(X^\an)$ be the subalgebra consisting of functions which are constant away from a set of the form $\bk{red}^{-1}(V)$ for some proper closed set $V\subsetneq \mc{X}_k$. The functions which were constructed in the proof of \hyperref[lem:approx]{Lemma~\ref*{lem:approx}} show that $\mc{A}$ separates points. Clearly $\mc{A}$ contains all constant functions. Thus, by the Stone-Weierstrass theorem, $\mc{A}$ is dense in $C^0(X^\an)$. Let $\varphi\in \mc{A}$, with say $\varphi\equiv c$ away from a set $\bk{red}^{-1}(V)$ with $V\subsetneq \mc{X}_k$ closed. Then \[
\int \varphi\,d\alpha = c[1 - \alpha(\bk{red}^{-1}(V))] + \int_{\bk{red}^{-1}(V)} \varphi\,d\alpha = c,\] since $\alpha(\bk{red}^{-1}(V)) = 0$. Since $\bk{x}\notin \bk{red}^{-1}(V)$, one has $\varphi(\bk{x}) = c$. We have thus shown that $\alpha$ agrees with $\delta_\bk{x}$ on $\mc{A}$. Since $\mc{A}$ is dense in $C^0(X^\an)$, we conclude that $\alpha = \delta_\bk{x}$. 
\end{proof}

In the special case where $K$ is equipped with the trivial absolute value, one can use the canonical map $\pi\colon X^\an \to X$ to obtain a more precise result about what happens to preimages of points $\bk{x}\in X^\an$ which \emph{do} lie in $\bk{red}^{-1}(\ms{E})$.

\begin{thm} Suppose $K$ is equipped with the trivial absolute value. Let $X$ be an irreducible projective variety of dimension $m$ over $K$, and $f\colon (X,L)\to (X,L)$ a flat polarized morphism of algebraic degree $d\geq 2$. Suppose that $f$ satisfies \hyperref[ass:technical]{Assumption~\ref*{ass:technical}}. Let $\bk{x}\in X^\an$ be any point, and assume that the smallest totally invariant closed set of $X$ containing $\pi(\bk{x})$ is the same as the smallest totally invariant closed set of $X$ containing $\bk{red}(\bk{x})$. Let this set $V$ have irreducible decomposition $V = V_0\cup\cdots\cup V_s$. Let $\bk{y}_i$ denote the classical point of $X^\an$ corresponding to $V_i$ for each $i$. Then, up to relabeling the $V_i$ if necessary, one has for each $i = 0,\ldots, s-1$ that $d^{-m(i+sn)}f^{(i+sn)*}\delta_\bk{x}\to \delta_{\bk{y}_i}$ weakly as $n\to \infty$.
\end{thm}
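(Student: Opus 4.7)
The plan is to adapt the argument for \hyperref[equid]{Theorem~\ref*{equid}}, replacing its Stone-Weierstrass step with an analysis of fibers of $\bk{red}$. The hypothesis ensures that $\pi(\bk{x})$ and $\bk{red}(\bk{x})$ share the same smallest totally invariant closed set $V$; since $\bk{red}(\bk{x})$ is a specialization of $\pi(\bk{x})$, both points lie in a common irreducible component of $V$. After relabeling so that $\pi(\bk{x}), \bk{red}(\bk{x}) \in V_0$ with $f(V_i) = V_{i-1}$ (mod $s$), we pass to the iterate $f^s$ to reduce to the case $s=1$, in which $V=V_0$ is irreducible with generic point $y=y_0$ and $\bk{y}=\bk{y}_0$ is the classical point lying over $y$. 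The general case follows from $s=1$ by applying $f^{i*}$, using weak continuity of pullback together with the identity $d^{-mi}f^{i*}\delta_{\bk{y}_0}=\delta_{\bk{y}_i}$; the latter holds because any Berkovich preimage $\bk{z}$ of $\bk{y}_{i-1}$ has $\pi(\bk{z}),\bk{red}(\bk{z})$ both in the scheme-theoretic fiber $f^{-1}(y_{i-1})=\{y_i\}$, forcing $\pi(\bk{z})=\bk{red}(\bk{z})=y_i$ and hence $\bk{z}=\bk{y}_i$.

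In the case $s=1$, set $\mu_n := d^{-mn}f^{n*}\delta_\bk{x}$ and pass to a weakly convergent subsequence $\mu_{n_j}\to \alpha$. By \hyperref[prop:compatibility]{Proposition~\ref*{prop:compatibility}} (noting $\tilde f = f$ for trivially valued $K$), the measures $\pi_*\mu_n = d^{-mn}f^{n*}\delta_{\pi(\bk{x})}$ and $\bk{red}_*\mu_n = d^{-mn}f^{n*}\delta_{\bk{red}(\bk{x})}$ both converge to $\delta_y$ in the classical weak topology on $X$ by \hyperref[thm:Zariski_equid]{Theorem~\ref*{thm:Zariski_equid}}. Applying \hyperref[prop:limit_measures]{Proposition~\ref*{prop:limit_measures}} then yields the complementary bounds $(\pi_*\alpha)(V')\geq \delta_y(V')$ and $(\bk{red}_*\alpha)(V')\leq \delta_y(V')$ for every irreducible closed $V'\subseteq X$. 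The first forces $\alpha(\pi^{-1}(V))=1$; the second, combined with the canonical decomposition $\bk{red}_*\alpha = \sum_x c_x \delta_x$ of \hyperref[thm:measures]{Theorem~\ref*{thm:measures}}, forces $c_x=0$ whenever $\overline{\{x\}}\not\supseteq V$.

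To finish the argument, decompose $\alpha = \sum_x \alpha_x$ by restricting to the disjoint Borel sets $\bk{red}^{-1}(\{x\})$, where $x$ ranges over those with $c_x\neq 0$. Each $\alpha_x$ is supported on $\bk{red}^{-1}(\{x\})\cap \pi^{-1}(V)$, and for any $\bk{z}$ in this intersection the specialization relation $\overline{\{\bk{red}(\bk{z})\}}\subseteq \overline{\{\pi(\bk{z})\}}$ produces the sandwich $V\subseteq \overline{\{x\}}\subseteq \overline{\{\pi(\bk{z})\}}\subseteq V$, whence $x=y$ and $\pi(\bk{z})=\bk{red}(\bk{z})=y$; the characterization of classical points recalled in \S6 then forces $\bk{z}=\bk{y}$. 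Thus $\alpha_x=0$ for $x\neq y$ and $\alpha_y = c_y\delta_\bk{y}$, so $\alpha=\delta_\bk{y}$ by the total mass condition. The main technical point will be marshalling the two pushforward bounds simultaneously: they run in opposite directions, and combining them relies on the specialization relationship between $\pi$ and $\bk{red}$, which is precisely what produces the equality $\pi(\bk{z})=\bk{red}(\bk{z})$ needed to land on a classical point. The direct Stone-Weierstrass approach of \hyperref[equid]{Theorem~\ref*{equid}} is unavailable here, because $\bk{y}\in \bk{red}^{-1}(V')$ for every $V'\supseteq V$.
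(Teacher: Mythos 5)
Your proof is correct, and while it follows the paper's strategy up through the application of \hyperref[prop:compatibility]{Proposition~\ref*{prop:compatibility}} and \hyperref[prop:limit_measures]{Proposition~\ref*{prop:limit_measures}} to the two pushforward sequences, the concluding step is genuinely different. The paper observes that the limit $\alpha$ annihilates all sets of the form $\pi^{-1}(U)\cup\bk{red}^{-1}(W)$ (with $U$ open disjoint from $V_i$, $W$ proper closed in $V_i$), then invokes Stone--Weierstrass on the subalgebra of continuous functions constant away from such sets to deduce $\alpha=\delta_{\bk{y}_i}$. You instead turn the one-sided bound on $\bk{red}_*\alpha$ into a constraint on the coefficients of the atomic decomposition from \hyperref[thm:measures]{Theorem~\ref*{thm:measures}} ($c_x=0$ unless $\overline{\{x\}}\supseteq V$), then use the specialization relation $\overline{\{\bk{red}(\bk{z})\}}\subseteq\overline{\{\pi(\bk{z})\}}$ together with $\alpha(\pi^{-1}(V))=1$ to produce the sandwich forcing $\pi(\bk{z})=\bk{red}(\bk{z})=y$; the characterization of classical points then pins $\bk{z}=\bk{y}$. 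This is a direct measure-theoretic identification that sidesteps the density argument entirely. Your explicit reduction to $s=1$ (and the verification that $d^{-mi}f^{i*}\delta_{\bk{y}_0}=\delta_{\bk{y}_i}$ via the one-point scheme-theoretic fiber) is also a cleaner organization than the paper's, which handles all residues $i$ simultaneously by taking $\mu_n=d^{-m(i+sn)}f^{(i+sn)*}\delta_\bk{x}$ and citing \hyperref[cor:varequid]{Corollary~\ref*{cor:varequid}} directly. The trade-off is that your route leans on the full strength of the atomic decomposition of Borel measures on the Zariski scheme (in particular Borel measurability of $\{x\}$ and of the countable set where $c_x\neq 0$), whereas the paper's route stays in the Banach-space framework already set up for \hyperref[equid]{Theorem~\ref*{equid}}; both are sound.
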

\begin{proof} We argue in a similar fashion to the proof of \hyperref[equid]{Theorem~\ref*{equid}}. Specifically, we set $\mu_n = d^{-m(i+sn)}f^{(i+sn)*}\delta_\bk{x}$, and let $\alpha$ be a weak limit of a subsequence of the $\mu_{n_i}$. Now by applying \hyperref[cor:varequid]{Corollary~\ref*{cor:varequid}} we know that the sequences $\pi_*\mu_n$ and $\bk{red}_*\mu_n$ both converge weakly to $\delta_{\bk{y}_i}$. Thus \hyperref[prop:limit_measures]{Proposition~\ref*{prop:limit_measures}} tells us that $\alpha(A) = 0$ for all sets $A$ of the form $A = \pi^{-1}(U)\cup \bk{red}^{-1}(W)$ where $U\subseteq X$ is an open set disjoint from $V_i$ and $W$ is a proper closed subset of $V_i$. Let $\mc{A}$ denote the subalgebra of $C^0(X^\an)$ consisting of all functions which are constant away from a set $A = \pi^{-1}(U)\cup \bk{red}^{-1}(W)$, where $U\subseteq X$ is an open set disjoint from $V_i$ and $W$ is a proper closed subset of $V_i$. From the functions constructed in the proof of \hyperref[lem:approx]{Lemma~\ref*{lem:approx}} we see that $\mc{A}$ separates points, and hence is dense in $C^0(X^\an)$ by the Stone-Weierstrass theorem. If $\varphi\in \mc{A}$ is such that $\varphi\equiv c$ outside of $A = \pi^{-1}(U)\cup \bk{red}^{-1}(W)$, then \[
\int \varphi\,d\alpha = c[1 - \alpha(A)] + \int_A\varphi\,d\alpha = c = \varphi(\bk{y}_i),\] and hence $\alpha$ agrees with $\delta_{\bk{y}_i}$ on the dense subalgebra $\mc{A}$. We conclude that $\alpha = \delta_{\bk{y}_i}$.
\end{proof}

\section{Equidistribution for tame points}

In this last section, we assume $K$ is an algebraically closed field equipped with the trivial absolute value. Let $X$ be an irreducible projective variety of dimension $m$ over $K$, and let $f\colon (X,L)\to (X,L)$ be a flat polarized dynamical system of algebraic degree $d\geq 2$, i.e., a map of good reduction. The goal of this section is to show that the preimages of  a large class of points $\bk{x}\in X^\an$ equidistribute to the Dirac mass at the Gauss point of $X^\an$, even when $\bk{red}(\bk{x})$ lies in the exceptional set $\ms{E}$ of $f$.

In the setting where $K$ is trivially valued, it is common to work not with seminorms but with semivaluations. A point $\bk{x}\in X^\an$ is a seminorm on the coordinate ring $K[U]$ of some affine open subset $U\subseteq X$. One may then associate to $\bk{x}$ a semivaluation $K[U]\to \R\cup\{+\infty\}$ given by $\varphi\mapsto -\log |\varphi(\bk{x})|$. Conversely, from a semivaluation $v\colon K[U]\to \R\cup\{+\infty\}$, one obtains a seminorm $\bk{x}\in X^\an$ by $|\varphi(\bk{x})| = e^{-v(\varphi)}$ for all $\varphi\in K[U]$. Via this equivalence, we will view points in $X^\an$ as being semivaluations rather than seminorms. The reduction $\bk{red}(v)$ of a semivaluation $v\in X^\an$ is then the unique point $\xi\in X$ such that $v\geq 0$ on $\mc{O}_{X,\xi}$ with $v>0$ on $\mf{m}_\xi$. 

\begin{Def} Let $v\in X^\an$ be a semivaluation with  $\bk{red}(v) = \xi\in X$. For any ideal $\mf{a}\subseteq \mc{O}_{X,\xi}$ we define $v(\mf{a}) = \inf_{\varphi\in \mf{a}} v(\varphi)$. Equivalently, if $\varphi_1,\ldots, \varphi_r$ generate $\mf{a}$, then $v(\mf{a}) = \min_i v(\varphi_i)$.
\end{Def}

We begin with a proof of \hyperref[thmC]{Theorem~\ref*{thmC}}, which proves that a weaker form of equidistribution of preimages holds for $f$ whenever all totally invariant cycles are superattracting.

\begin{Def} Suppose $V\subsetneq X$ is an $f$-invariant irreducible subvariety. We say that $V$ is \emph{superattracting} for $f$ if there is an integer $n\geq 1$ such that $f^{n*}\mf{m}_V\subseteq\mf{m}_V^2$, where here $f^{n*}$ is the induced local ring homomorphism $f^{n*}\colon \mc{O}_{X,V}\to \mc{O}_{X,V}$. More generally, if $V$ is part of an $s$-periodic cycle for $f$, we say that the cycle is superattracting for $f$ if $V$ is superattracting for $f^s$.
\end{Def}

This notion of superattracting cycle generalizes the standard notion of superattracting cycles for rational maps $f\colon \pr^1_K\to \pr^1_K$. For instance, when $V = x$ is a fixed closed point of $X$, then $x$ is superattracting if and only if the derivative $Df(x)$ of $f$ at $x$ is nilpotent. In dimension 1, it is automatic that any totally invariant point is superattracting, but this is no longer true in higher dimensions, as was first noted by Forn\ae ss-Sibony \cite[p.\ 212]{MR1285389}. As an example, the point $[0:0:1]$ is totally invariant for the morphism $f\colon \pr^2_K\to \pr^2_K$ given by $f[x:y:z] = [xz+y^2:x^2:z^2]$, but it is not superattracting. 

\begin{thm} Let $f\colon (X,L)\to (X,L)$ be a flat polarized morphism of algebraic degree $d\geq 2$.  Suppose that $f$ satisfies \hyperref[ass:technical]{Assumption~\ref*{ass:technical}}, and that all totally invariant cycles for $f$ are superattracting. Let $v\in X^\an$, and let $V\subseteq X$ be the smallest totally invariant closed set  containing $\pi(v)$. Suppose $V$ has irreducible decomposition $V = V_1\cup\cdots\cup V_r$, and let $w_1,\ldots, w_r\in X^\an$ be the classical points corresponding to the generic points of the $V_i$. Then one has weak convergence of the Cesaro means \[\mu_n := n^{-1}\sum_{i=0}^{n-1} d^{-mi}f^{i*}\delta_v\to r^{-1}(\delta_{w_1} + \cdots + \delta_{w_r})\] as $n\to \infty$. In the special case where $\pi(v)\notin \ms{E}$, this gives that the $\mu_n$ converge weakly to the Dirac mass at the Gauss point.
\end{thm}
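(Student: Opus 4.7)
The plan is to argue by induction on $\dim V$, combining weak-limit arguments on push-forwards with the quantitative estimates provided by the superattracting hypothesis. By weak compactness of probability measures on the compact Hausdorff space $X^\an$, it suffices to show every weak subsequential limit $\alpha$ of the $\mu_n$ equals $r^{-1}\sum_i\delta_{w_i}$. Any such $\alpha$ satisfies $T\alpha=\alpha$ for $T:=d^{-m}f^*$, since $\|T\mu_n-\mu_n\|=n^{-1}\|T^n\delta_v-\delta_v\|\le 2/n$.

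I would first push $\mu_n$ forward under $\pi:X^\an\to X$. By \hyperref[prop:compatibility]{Proposition~\ref*{prop:compatibility}}, $\pi_*\mu_n=n^{-1}\sum_{i=0}^{n-1}d^{-mi}f^{i*}\delta_{\pi(v)}$, which by \hyperref[cor:varequid]{Corollary~\ref*{cor:varequid}} converges weakly on $X$ to $\nu:=r^{-1}\sum_i\delta_{y_i}$, where $y_i$ is the generic point of $V_i$. Then \hyperref[prop:limit_measures]{Proposition~\ref*{prop:limit_measures}}(2) gives $\pi_*\alpha(V_i)\ge 1/r$ for each $i$; summing, this concentrates $\alpha$ on $\pi^{-1}(V)=\bigcup V_i^\an$, with mass exactly $1/r$ on each $V_i^\an$ by the cyclic symmetry of $T$ permuting the components.

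The second step, by induction on $\dim V$, identifies $r\alpha|_{V_1^\an}$ as $\delta_{w_1}$ (the other components are analogous). The base case $\dim V=0$ is immediate, as then $v$ is one of the $w_i$ and $\mu_n$ is precisely the uniform average over the cyclic orbit. For the induction step, after replacing $f$ by $g=f^r$ and setting $V:=V_1$, the restriction $g|_V$ inherits all hypotheses of the theorem: in particular, every $g|_V$-totally-invariant cycle $W\subseteq V$ is also $g$-totally-invariant (since $g^{-1}(W)\subseteq g^{-1}(V)=V$ forces $g^{-1}(W)\cap V=W$), hence superattracting. A multiplicity computation using the good-reduction behavior in the normal direction to $V$ shows that $T|_{V^\an}$ agrees with the $V$-intrinsic operator $T_V:=d^{-\dim V}(g|_V)^*$, via the identity $m_g(u)=d^{\dim X-\dim V}m_{g|_V}(u)$ for $u\in V^\an$ with $\pi(u)=y_V$. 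By the inductive hypothesis, for each $u\in V^\an$ the Cesaro means of $T_V^i\delta_u$ converge to a measure $L_u$ of the form given by the theorem; in particular $L_u=\delta_{w_V}$ when $\pi(u)=y_V$. Disintegrating $r\alpha|_{V^\an}=\int L_u\,d(r\alpha|_{V^\an})(u)$ then yields $r\alpha|_{V^\an}=\delta_{w_V}$, provided $\alpha$ places no mass on $W^\an$ for any proper totally invariant $W\subsetneq V$.

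The main obstacle is this final exclusion of mass on $W^\an$'s, which is precisely where the superattracting hypothesis enters quantitatively. Iterating $g^{N*}\mf{m}_W\subseteq\mf{m}_W^2$ gives the key bound $u(\mf{m}_W)\le v(\mf{m}_W)/2^k$ for any $f^{kn}$-preimage $u$ of $v$, showing that the supports of the $\mu_n$ stay uniformly separated from $W^\an=\{u(\mf{m}_W)=\infty\}$. Upgrading this uniform separation to $\alpha(W^\an)=0$ is delicate because $W^\an$ is closed in $X^\an$, so the standard portmanteau inequality points the wrong way. The resolution is to construct, following the recipe of \hyperref[lem:approx]{Lemma~\ref*{lem:approx}}, an explicit continuous $\psi\in C^0(X^\an)$ built from generators of $\mf{m}_W$ that vanishes on $W^\an$ but remains uniformly bounded below on the supports of the $\mu_n$, and then conclude by dominated convergence in the pairing $\int\psi\,d\alpha=\lim\int\psi\,d\mu_n$.
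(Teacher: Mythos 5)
The key quantitative input you identify is correct: invariance of the subsequential limit $\alpha$ plus the superattracting bound $u(\mf{m}_W)\leq v(\mf{m}_W)/2^k$ for $f^{kN}$-preimages is exactly the engine of the proof. But the inductive framework you wrap around it is both unnecessary and genuinely gappy. To apply the theorem inductively to $g|_V\colon V\to V$, you need $g|_V$ to be a \emph{flat} polarized endomorphism of the (possibly singular) variety $V$, and the entire pull-back formalism of \S\S2--3 and \S6 rests on that flatness. Nothing in the hypotheses forces $g|_V$ to be flat once $V$ is singular, and you do not address this. Worse, the multiplicity identity $m_g(u)=d^{\dim X-\dim V}\,m_{g|_V}(u)$ that you invoke to identify $T|_{V^\an}$ with $T_V$ is asserted without argument; it is not obvious (indeed it is the kind of normal-bundle factorization that can fail along a singular or non--locally-complete-intersection $V$), and the "good-reduction behavior in the normal direction" you gesture at would itself require a lemma at least as hard as the theorem.

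The paper's own proof dispenses with the induction entirely, and in doing so avoids both issues. After establishing $d^{-m}f^*\mu=\mu$ and $\mathrm{supp}\,\mu\subseteq\pi^{-1}(V)$ (as you do), it reduces the theorem to showing $\mu(\bk{red}^{-1}(W))=0$ for every proper totally invariant $W\subsetneq V$ --- note this is the $\bk{red}$-preimage, which is larger than the $\pi$-preimage $W^\an$ you control, and is what a $\bk{red}_*$-based argument actually requires. For that step one sets $\varphi(w)=w(\mf{m}_W)$ on $\bk{red}^{-1}(W)$ and $0$ off it; superattraction gives $d^{-mns}f^{ns}_*\varphi\leq\varphi/2$ for $n$ large, and invariance gives $\int\varphi\,d\mu=d^{-mns}\int f^{ns}_*\varphi\,d\mu\leq\tfrac12\int\varphi\,d\mu$, forcing $\int\varphi\,d\mu\in\{0,+\infty\}$; the separation you noticed (all $\mu_n$ supported in $\{\varphi\leq v(\mf{m}_W)\}$) rules out $+\infty$. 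There is no delicate dominated-convergence step and no need to construct auxiliary cutoffs. Finally, once $\mu$ gives no mass to any $\bk{red}^{-1}(W)$, one applies the classical equidistribution of \S5 (a routine extension of Corollary~\ref{cor:classical_equid}) to the push-forward $\bk{red}_*\mu$ on $X$ itself --- \emph{not} to the restricted map $g|_V$ on $V$ --- and uses that $\bk{red}^{-1}(y_i)=\{w_i\}$ to conclude $\mu=r^{-1}\sum\delta_{w_i}$. Working with $\bk{red}_*\mu$ on the ambient $X$ is what lets one cite the \S5 results verbatim, with the multiplicities of $f$ on $X$, sidestepping the restriction-multiplicity comparison your induction depends on.
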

\begin{proof} It suffices to show that any weak limit $\mu$ of a subsequence of the $\mu_n$ is the measure $r^{-1}(\delta_{w_1} + \cdots + \delta_{w_r})$. Suppose then that $\mu$ is the weak limit of a subsequence $\mu_{n_i}$. The measure $\mu$ is necessarily \emph{totally invariant}, that is,  $d^{-m}f^*\mu = \mu$. Indeed \[
d^{-m}f^*\mu = \lim_{i\to \infty} d^{-m}f^*\mu_{n_i} = \lim_{i\to \infty} [\mu_{n_i} + n_i^{-1}(d^{-mn_i}f^{n_i*}\delta_v - \delta_v)] = \lim_{i\to \infty} \mu_{n_i} = \mu.\] Since $\pi^{-1}(V)$ is a totally invariant closed subset of $X^\an$ and $\delta_v(\pi^{-1}(V)) = 1$, the measure $\mu$ is supported within $\pi^{-1}(V)$. If we can show that $\mu(\bk{red}^{-1}(W)) = 0$ for all proper totally invariant closed sets $W\subsetneq V$, then an easy variant of \hyperref[cor:classical_equid]{Corollary~\ref*{cor:classical_equid}} shows $d^{-mn}f^{n*}\mu\to r^{-1}(\delta_{w_1} + \cdots + \delta_{w_r})$. However, $d^{-mn}f^{n*}\mu = \mu$ for all $n$, so in fact $\mu = r^{-1}(\delta_{w_1} + \cdots + \delta_{w_r})$. We have therefore reduced the problem to showing that $\mu(\bk{red}^{-1}(W)) = 0$ for all irreducible closed sets $W\subsetneq V$ which are part of a totally invariant cycle for $f$.

Fix $W\subsetneq V$ an irreducible closed set with $f^{-s}(W) = W$. Let $\mf{m}_W$ be the maximal ideal of $W$ in the local ring $\mc{O}_{X,W}$. Let $\varphi\colon X^\an\to \R\cup\{+\infty\}$ be the continuous function \[
\varphi(w) := \begin{cases} w(\mf{m}_W) & \bk{red}(w)\in W.\\ 0 & \bk{red}(w)\notin W.\end{cases}\] Since $\varphi$ is strictly positive on $\bk{red}^{-1}(W)$ and zero everywhere else, one has $\mu(\bk{red}^{-1}(W)) = 0$ if and only if  $\int \varphi \,d\mu = 0$. By assumption $W$ is superattracting, i.e., $f^{ns*}\mf{m}_W\subseteq \mf{m}_W^2$ for large enough $n$. If $w\notin \bk{red}^{-1}(W)$, then all $f^{ns}$-preimages of $w$ also do not lie in $\bk{red}^{-1}(W)$, so $d^{-mns}(f^{ns}_*\varphi)(w) = 0$. On the other hand, if $\bk{red}(w)\in W$ and $f^{ns}(w')= w$, then \[
w(\mf{m}_W) = w'(f^{ns*}\mf{m}_W)\geq 2w'(\mf{m}_W)\] when $n$ is large enough. It follows that $d^{-mns}(f^{ns}_*\varphi)(w) \leq \varphi(w)/2$. Combining these two derivations yields $d^{-mns}f^{ns}_*\varphi\leq \varphi/2$ when $n$ is large. Therefore \[
0\leq \int \varphi\,d\mu = d^{-mns}\int f^{ns}_*\varphi\,d\mu \leq \frac{1}{2}\int\varphi\,d\mu.\] This is only possible if $\int\varphi\,d\mu = 0$ or $+\infty$. The latter case cannot happen, however, since by assumption $\pi(v)\notin W$.
\end{proof}

To prove \hyperref[thmB]{Theorem~\ref*{thmB}}, we need to restrict ourselves to the case when $X$ is smooth. In this case, any finite surjective morphism $f\colon X\to X$ is flat, and hence any polarized morphism $f\colon (X,L)\to (X,L)$ has good reduction. The motivation for assuming $X$ is smooth is that we are able to make the following definition.

\begin{Def} Let $\xi\in X$. Because $X$ is smooth, the local ring $\mc{O}_{X,\xi}$ is regular, and hence the map $\ord_\xi\colon \mc{O}_{X,\xi}\to \N\cup\{+\infty\}$ given by $\ord_\xi(\varphi) := \max\{n : \varphi\in \mf{m}_\xi^n\}$ is a valuation on the ring $\mc{O}_{X,\xi}$. 
\end{Def}

We will prove equidistribution of preimages for valuations $v\in X^\an$ that are \emph{tame}, that is, valuations which satisfy the following boundedness condition.

\begin{Def}\label{def:tame} Let $v\in X^\an$ with $\xi = \bk{red}(v)$. We say that $v$ is \emph{tame} if there is a $C>0$ such that $v(\varphi)\leq C\ord_\xi(\varphi)$ for all $\varphi\in \mc{O}_{X,\xi}$. Note, in particular, that such a $v$ is necessarily a \emph{valuation} instead of just a semivaluation, i.e., $\pi(v)$ is the generic point of $X$.
\end{Def}

\begin{ex} Tame valuations make up much of the space $X^\an$, as the following examples illustrate. \begin{enumerate}
\item[1.] The set of tame valuations of $\pr^{1,\an}_K$ is the \emph{hyperbolic space} $\mathbf{H}:= \pr^{1,\an}_K\smallsetminus \pr^1_K$.
\item[2.] All monomial valuations are tame. Recall that a valuation $v\in X^\an$ with $\xi = \bk{red}(v)$ is a \emph{monomial valuation} if it is of the following form: for some system of parameters $t_1,\ldots, t_r$ of the completed local ring $\hat{\mc{O}}_{X,\xi}\cong \kappa(\xi)\llbracket t_1,\ldots, t_r\rrbracket$ and some real numbers $\alpha_1,\ldots, \alpha_r>0$, one has \[
v\left({\sum}_{\beta\in \N^r} \lambda_\beta t^\beta\right) = \min\{\beta_1\alpha_1 + \cdots + \beta_r\alpha_r : \lambda_\beta\neq 0\}.\] It is easy to check that such a valuation is tame.
\item[3.] Divisorial valuations are tame. A valuation $v\in X^\an$ is \emph{divisorial} if there is a blowup $p\colon X'\to X$, an exceptional prime divisor $E$ of $p$, and a real number $\lambda>0$ such that $v(\varphi) = \lambda\ord_E(\varphi\circ p)$ for all $\varphi\in K(X)$.
\item[4.] More generally, all quasimonomial valuations are tame. A valuation $v\in X^\an$  is \emph{quasimonomial} if there is some blowup $p\colon X'\to X$ over $\xi$ and a monomial valuation $w$ at a point $\zeta\in X'$ such that $v(\varphi) = w(\varphi\circ p)$ for all $\varphi\in K(X)$. Such valuations are studied in detail in \cite{jonsson-mustata:asymptotic}, see also \cite{MR1963690}. Monomial valuations and divisorial valuations are both examples of quasimonomial valuations. The tameness of quasimonomial valuations was proved by Tougeron (\cite[Lemma IX.1.3]{MR0440598}, see also \cite{MR817161}). They are sometimes called \emph{Abhyankar valuations}, since they are precisely those valuations $v\in X^\an$ for which one has equality in the Abhyankar inequality; they are further characterized by being Shilov boundaries of Weierstrass domains in $X^\an$. See \cite[\S4]{Poineau} for more details about these alternate characterizations. Quasimonomial valuations (indeed, even divisorial valuations) are dense in $X^\an$.
\item[5.] One should note that there are tame valuations in $X^\an$ that are not quasimonomial, and, if $\dim X>1$, there are valuations in $X^\an$ that are not tame, see for instance \cite[Proposition A.3]{MR2097722}.
\end{enumerate}
\end{ex}

Suppose that $v\in X^\an$ is a valuation, and thus that it defines a valuation on the function field $K(X)$ of $X$. If we identify $K(X)$ with the subfield $f^*K(X)\subseteq K(X)$, then the preimages of $v$ for $f$ are precisely the valuations on $K(X)$ which extend $v$ on $f^*K(X)$. For the remainder of the section, we will assume that the map $f$ is \emph{separable}, that is, that the field extension $K(X)/f^*K(X)$ is separable. This is a weaker assumption than \hyperref[ass:technical]{Assumption~\ref*{ass:technical}}.

\begin{prop} Suppose that $v\in X^\an$ is a valuation, and $f(w) = v$. Let $L/F$ denote the field extension $K(X)/f^*K(X)$, so that $w$ is a valuation on $L$ extending $v$ on $F$. If $f$ is separable, then the multiplicity of $f$ at $w$ is given by $m_f(w) = [L_w : F_v]$, where $L_w$ and $F_v$ denote the completions of $L$ and $F$ with respect to $w$ and $v$, respectively.
\end{prop}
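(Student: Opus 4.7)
The plan is to leverage the finite flatness of $f$ together with a classical valuation-theoretic decomposition, passing through the Berkovich tensor-product formula (Proposition 2.6.10 of \cite{MR1259429}) already used in the proof of \hyperref[prop:kernel_multiplicities]{Proposition~\ref*{prop:kernel_multiplicities}}. The equality $m_f(w) = [L_w:F_v]$ will drop out of comparing two different base changes of a single isomorphism of $\ms{O}_{Y,v}$-algebras.

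First, I will specialize the Berkovich decomposition to the present setting. Because $v$ is a \emph{valuation} on $F = K(Y)$, its kernel is trivial, so $y := \pi(v)$ is the generic point of $Y$; since $f$ is finite and $X$ is irreducible, every preimage $w'$ of $v$ under $f$ in turn satisfies $\pi(w') = x$, the generic point of $X$. The cited formula then collapses to an isomorphism of $\ms{O}_{Y,v}$-algebras \[
\ms{O}_{Y,v}\otimes_F L \;\cong\; \prod_{w'|v}\ms{O}_{X,w'}.
\]

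Next I would use finite flatness to show that each $\ms{O}_{X,w'}$ is a free $\ms{O}_{Y,v}$-module of rank exactly $m_f(w')$. Since $f$ is finite flat, so is $f^\an$ by the GAGA principles invoked in \hyperref[prop:FRL]{Proposition~\ref*{prop:FRL}}, whence $f^\an_*\ms{O}_X$ is locally free over $\ms{O}_Y$; its stalk at $v$, which is $\bigoplus_{w'|v}\ms{O}_{X,w'}$, is a finite free $\ms{O}_{Y,v}$-module. Each summand is accordingly a direct summand of a finite free module, hence itself finite free over the local ring $\ms{O}_{Y,v}$, of some rank $R_{w'}$; reducing modulo $\mf{m}_v$ identifies this rank with $\dim_{\kappa(v)}(\ms{O}_{X,w'}/\mf{m}_v\ms{O}_{X,w'}) = m_f(w')$. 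I would then base-change the displayed isomorphism along $\ms{O}_{Y,v}\to \ms{H}(v)$, using that $\ms{H}(v) = F_v$ since $v$ has trivial kernel on $F$, to obtain \[
F_v\otimes_F L \;\cong\; \prod_{w'|v}\bigl(F_v\otimes_{\ms{O}_{Y,v}}\ms{O}_{X,w'}\bigr).
\] The separability of $L/F$ furnishes the classical identification $F_v\otimes_F L \cong \prod_{w'|v} L_{w'}$ on the left. Matching factors identifies $F_v\otimes_{\ms{O}_{Y,v}}\ms{O}_{X,w}$ with $L_w$, and combining with the rank computation yields $m_f(w) = R_w = \dim_{F_v}\bigl(F_v\otimes_{\ms{O}_{Y,v}}\ms{O}_{X,w}\bigr) = [L_w : F_v]$.

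The main obstacle I foresee is justifying the factor-matching rigorously: one must verify that the isomorphism between the two product decompositions of $F_v\otimes_F L$ respects the indexing by extensions of $v$. Naturality should make this transparent, since the projection onto the $w$-th factor on the Berkovich side corresponds to localizing at $\ms{O}_{X,w}$ and then completing, while on the classical side it corresponds to the embedding $L\hookrightarrow L_w$, and both describe specialization at $w$; but the compatibility check still needs to be made carefully.
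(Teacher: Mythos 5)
Your route is genuinely different from the paper's, but as you yourself flag, the factor-matching step is a real gap and not just a detail. Once you base-change the Berkovich stalk decomposition along $\ms{O}_{Y,v}\to F_v$, you have two product decompositions of the Artinian $F_v$-algebra $F_v\otimes_F L$ indexed by the same set, but nothing in "naturality" alone identifies the factor $F_v\otimes_{\ms{O}_{Y,v}}\ms{O}_{X,w}$ with $L_w$. The canonical map $F_v\otimes_{\ms{O}_{Y,v}}\ms{O}_{X,w}\to L_w$ (multiplication inside $L_w$) is \emph{surjective} — its image is a finite-dimensional, hence closed, $F_v$-subspace of $L_w$ that is dense because it contains $L$ — but injectivity is not automatic: since $\ms{O}_{X,w}/\ms{O}_{Y,v}$ is merely a finite separable field extension, $F_v\otimes_{\ms{O}_{Y,v}}\ms{O}_{X,w}$ is a priori only a finite \emph{product} of finite extensions of $F_v$, of which $L_w$ is one quotient. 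You would still need to rule out additional factors.

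The paper avoids this altogether by a slicker counting argument. It identifies $\ms{H}(v)\cong F_v$ and $\ms{H}(w)\cong L_w$, notes $m_f(w)=[\ms{O}_{X,w}:\ms{O}_{X,v}]$ (the stalks are fields since $v,w$ have trivial kernel), derives the one-sided bound $[L_w:F_v]=[\ms{H}(w):\ms{H}(v)]\leq[\ms{O}_{X,w}:\ms{O}_{X,v}]=m_f(w)$ from density of $\ms{H}(v)\cdot\ms{O}_{X,w}$ in $\ms{H}(w)$, and then sums: $\sum_w[L_w:F_v]=[L:F]=d^m=\sum_w m_f(w)$ forces termwise equality. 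You can rescue your proof by importing exactly this device: the surjections $F_v\otimes_{\ms{O}_{Y,v}}\ms{O}_{X,w}\twoheadrightarrow L_w$ give $m_f(w)\geq[L_w:F_v]$, and the two global dimension counts of $F_v\otimes_F L$ (one from your rank computation via flatness, one from the classical separable splitting) agree, so each surjection is an isomorphism. Without that closing argument, the proposal does not establish the claimed identity.
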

\begin{proof} The local rings $\ms{O}_{X,w}$ and $\ms{O}_{X,v}$ are fields, and $m_f(w) = [\ms{O}_{X,w}:\ms{O}_{X,v}]$ for any preimage $w$ of $v$.  If $\ms{H}(w)$ and $\ms{H}(v)$ denote the completed residue fields of $w$ and $v$, respectively, then $[\ms{H}(w): \ms{H}(v)]\leq [\ms{O}_{X,w} : \ms{O}_{X,v}]$. One has isomorphisms $\ms{H}(w) \cong L_w$ and $\ms{H}(v)\cong F_v$, and therefore $[L_w:F_v] \leq [\ms{O}_{X,w} : \ms{O}_{X,v}] = m_f(w)$. Since $L/F$ is separable, \[
F_v\otimes_F L\cong \bigoplus_{f(w) = v} L_w\] by \cite[Corollary VI.8.2/2]{MR1727221}, and thus \[
d^m = [L:F] = \sum_{f(w) = v} [L_w:F_v]\leq \sum_{f(w) = v} m_f(w) = d^m.\] It follows that $m_f(w) = [L_w:F_v]$ for each preimage $w$ of $v$.
\end{proof}

\begin{cor}\label{cor:bourbaki} Suppose $f$ is a separable, and let $N_f$ denote the norm homomorphism $N_f\colon K(X)^\times\to f^*K(X)^\times$ associated to the field extension $K(X)/f^*K(X)$. If $v\in X^\an$ is a valuation and $\varphi\in K(X)^\times$, then \[
\sum_{f(w) = v} m_f(w)w(\varphi) = v(N_f(\varphi)).\]
\end{cor}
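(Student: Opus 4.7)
My plan is to deduce this corollary directly from the preceding proposition (identifying $m_f(w)$ with the local degree $[L_w:F_v]$) together with two standard facts from valuation theory: the compatibility of the norm with the decomposition $F_v \otimes_F L \cong \bigoplus_{f(w)=v} L_w$, and the formula relating the norm of an element to the unique extension of a complete valuation to a finite extension.

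First, I would invoke the decomposition
\[
F_v \otimes_F L \cong \bigoplus_{f(w) = v} L_w
\]
used in the previous proposition (and recorded as \cite[Corollary VI.8.2/2]{MR1727221}). Since the norm $N_{L/F}$ is compatible with base change and with products, under this decomposition the image of $\varphi \in L^\times$ corresponds to the tuple $(\varphi)_w$, and the norm of this element from $F_v \otimes_F L$ down to $F_v$ factors as the product of the local norms. In other words,
\[
N_f(\varphi) \;=\; N_{L/F}(\varphi) \;=\; \prod_{f(w) = v} N_{L_w/F_v}(\varphi)
\]
as elements of $F_v^\times$ (and hence, being an element of $F^\times$, under $v$ as well).

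Next, for each preimage $w$, the field $L_w$ is a finite extension of the complete field $F_v$, and the valuation $v$ on $F_v$ extends uniquely to $L_w$ — necessarily to the completion $w$. For such a finite extension of a complete valued field one has the classical identity
\[
v\bigl(N_{L_w/F_v}(\varphi)\bigr) \;=\; [L_w : F_v]\, w(\varphi)
\]
for all $\varphi \in L_w^\times$ (see, e.g., Bourbaki, \emph{Commutative Algebra}, Chapter VI). Combining this with the preceding proposition, which gives $m_f(w) = [L_w : F_v]$, yields
\[
v\bigl(N_{L_w/F_v}(\varphi)\bigr) \;=\; m_f(w)\, w(\varphi).
\]

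Finally, applying $v$ to the product decomposition of $N_f(\varphi)$ and summing the preceding identity over all preimages $w$ of $v$ gives exactly
\[
v(N_f(\varphi)) \;=\; \sum_{f(w) = v} v\bigl(N_{L_w/F_v}(\varphi)\bigr) \;=\; \sum_{f(w) = v} m_f(w)\, w(\varphi),
\]
which is the desired identity. No real obstacle arises here since the corollary is essentially a bookkeeping exercise, leveraging the previous proposition; the only mildly delicate point is checking that the abstract decomposition of $F_v \otimes_F L$ is compatible with the norm maps on each factor, but this is standard once one recalls that the norm of an element of a product of fields is the product of the norms of its components.
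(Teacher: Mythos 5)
Your proof is correct. The paper's own ``proof'' of this corollary is nothing more than the citation ``See [Bourbaki, \emph{Commutative Algebra}, Corollary VI.8.5/3],'' which states precisely this norm-valuation formula; your argument is a self-contained unpacking of that reference, using exactly the ingredients Bourbaki does: the decomposition $F_v \otimes_F L \cong \bigoplus_{f(w)=v} L_w$ (already invoked in the preceding proposition's proof), the compatibility of the norm with base change and with products, the classical identity $v(N_{L_w/F_v}(\varphi)) = [L_w:F_v]\,w(\varphi)$ for a finite extension of a complete valued field, and the identification $m_f(w) = [L_w:F_v]$ from the previous proposition. So while your write-up is more explicit than the paper's, it is mathematically the same route; there is no gap, and nothing is skipped. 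The only thing you leave at the level of ``standard'' — that the abstract ring decomposition is compatible with the norm maps — is indeed standard (norm as determinant of multiplication is block-diagonal over a product), and the paper does not address it either, since it simply defers to Bourbaki for the whole statement.
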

\begin{proof} See \cite[Corollary VI.8.5/3]{MR1727221}.
\end{proof}

\begin{prop}\label{prop:eq_char} Let $v\in X^\an$ be a valuation. Assume that for each $\varphi\in K(X)^\times$ one has \[
d^{-mn}\sum_{f^n(w)  =v} m_{f^n}(w)|w(\varphi)|\to 0\] as $n\to \infty$. Then the preimages of $v$ equidistribute to the Dirac mass at the Gauss point.
\end{prop}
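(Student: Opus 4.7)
The plan is to combine weak-$*$ compactness of Radon probability measures on the compact Hausdorff space $X^\an$ with a Stone-Weierstrass density argument. Set $\mu_n := d^{-mn} f^{n*}\delta_v$; these are Radon probability measures by the construction in \S 6, and I want to show $\mu_n \to \delta_{\bk{x}_G}$ weakly, where $\bk{x}_G$ denotes the Gauss point (the unique classical point whose reduction is the generic point of $X$). By weak-$*$ compactness of the space of Radon probability measures on $X^\an$, it suffices to show that every weak-$*$ subsequential limit $\mu$ of $\{\mu_n\}$ equals $\delta_{\bk{x}_G}$, so I fix such a limit $\mu_{n_i}\to \mu$.

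First I would translate the hypothesis into a statement about $\mu$. The function $w \mapsto |\varphi(w)| = \exp(-w(\varphi))$ extends to a continuous map $X^\an \to [0, +\infty]$ (sending zeros of $\varphi$ to $0$ and poles to $+\infty$), so $w \mapsto |w(\varphi)| = |\log|\varphi(w)||$ is continuous with values in $[0, +\infty]$. For any truncation level $M > 0$, the function $\min(M, |w(\varphi)|)$ is bounded and continuous on $X^\an$, so weak convergence combined with the hypothesis yields
\[
\int \min(M, |w(\varphi)|)\, d\mu = \lim_{i \to \infty} \int \min(M, |w(\varphi)|)\, d\mu_{n_i} \leq \limsup_{i \to \infty} \int |w(\varphi)|\, d\mu_{n_i} = 0.
\]
Letting $M \to \infty$ and invoking monotone convergence gives $\int |w(\varphi)|\, d\mu = 0$, so $w(\varphi) = 0$ for $\mu$-almost every $w$, equivalently $|\varphi(w)| = 1$ $\mu$-a.e.

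Next I would apply Stone-Weierstrass. Let $\mc{A} \subseteq C^0(X^\an)$ be the subalgebra generated by constants and functions of the form $F \circ |\varphi|$ with $\varphi \in K(X)^\times$ and $F \in C^0([0, +\infty])$. This algebra contains constants and separates points of $X^\an$ (any two distinct seminorms must disagree on some rational function), so by the real Stone-Weierstrass theorem it is dense in $C^0(X^\an)$. Any $\psi \in \mc{A}$ is a polynomial expression in finitely many functions $F_j \circ |\varphi_j|$; the previous step together with a finite intersection of full $\mu$-measure sets gives $|\varphi_j(w)| = 1$ for $\mu$-a.e.\ $w$ and all $j$, so $\psi(w)$ equals $\psi(\bk{x}_G)$ $\mu$-almost everywhere, hence $\int \psi\, d\mu = \psi(\bk{x}_G)$. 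Density then forces $\mu = \delta_{\bk{x}_G}$, as required.

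The main obstacle will be the continuity claims, specifically verifying that $|\varphi(\cdot)|$ really extends continuously to a map $X^\an \to [0, +\infty]$ for every $\varphi \in K(X)^\times$, and that the generating family really lies in $C^0(X^\an)$. For normal $X$ (in particular smooth $X$, which is the setting of Theorem B) this is routine, since rational functions then have only divisorial poles and no higher-codimension indeterminacy; for general $X$ one can instead generate $\mc{A}$ from regular functions on a finite affine cover and patch via partitions of unity or a local Stone-Weierstrass argument on each chart, at the cost of more bookkeeping but no essential difficulty.
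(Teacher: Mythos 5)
The crucial gap is your claim that $w\mapsto|\varphi(w)|$ extends continuously to a map $X^\an\to[0,+\infty]$, and hence that $\min(M,|w(\varphi)|)$ lies in $C^0(X^\an)$. This is false, and already fails on smooth $X$ of dimension $\geq 2$. The obstruction is indeterminacy in codimension $\geq 2$: a rational function $\varphi\in K(X)^\times$ can have its divisor of zeros and its divisor of poles both pass through a point $\xi$, in which case neither $\varphi$ nor $1/\varphi$ lies in $\mc{O}_{X,\xi}$ and $w\mapsto|\varphi(w)|$ has no limit along points of $X^\an$ reducing to $\xi$. Concretely, on $X=\pr^2_K$ with $K$ trivially valued take $\varphi = x/y$ and the classical point $\bk{p}$ over $p=[0:0:1]$. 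Writing $a=x/z$, $b=y/z$, the monomial valuations $w_{s,t}$ with $w_{s,t}(a)=s$, $w_{s,t}(b)=t$ converge to $\bk{p}$ as $s,t\to\infty$, yet $w_{s,t}(\varphi)=s-t$ takes every real value along such families, so $|w(\varphi)|$, and hence $\min(M,|w(\varphi)|)$ for any finite $M$, has no continuous extension at $\bk{p}$. The step $\int\min(M,|w(\varphi)|)\,d\mu = \lim_i\int\min(M,|w(\varphi)|)\,d\mu_{n_i}$ therefore has no justification (it is precisely what weak-$*$ convergence tested against a \emph{continuous} function would give, and continuity fails), and the proposed generating set for $\mc{A}$ does not lie in $C^0(X^\an)$, so Stone--Weierstrass does not apply. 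Your parenthetical claim that smoothness removes ``higher-codimension indeterminacy'' is incorrect: normality kills indeterminacy only in codimension one, and the $\pr^2$ example is smooth.

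The paper's proof avoids this issue by using test functions of a fundamentally different shape, and by arguing by contradiction. If a weak limit $\mu$ of the $\mu_{n_i}$ is not the Dirac mass at the Gauss point, then there must exist an irreducible proper closed set $E\subsetneq X$ with $\mu(\bk{red}^{-1}(E))>0$ (this is the Stone--Weierstrass density from the proof of \hyperref[equid]{Theorem~\ref*{equid}}, applied to the algebra of functions constant off a tube $\bk{red}^{-1}(V)$). One then takes $\psi(w)=\min\{1,w(\mf{m}_E)\}$ for $\bk{red}(w)\in E$ and $\psi(w)=0$ otherwise. This $\psi$ \emph{is} genuinely continuous: it evaluates $w$ only on generators of $\mf{m}_E$, which are regular at the relevant reduction points, and the truncation at $1$ together with the restriction to the tube removes any blow-up. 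The elementary bound $\psi(w)\leq|w(\varphi)|$ for $\varphi\in\mf{m}_E$ then feeds the hypothesis to force $0<\int\psi\,d\mu\leq\lim_i\int|w(\varphi)|\,d\mu_{n_i}=0$, a contradiction. The idea you are missing is to localize the test function to the tube $\bk{red}^{-1}(E)$ and evaluate $w$ only on functions regular there, rather than on arbitrary elements of $K(X)^\times$; that localization is exactly what restores the continuity your argument needs but does not have.
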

\begin{proof} Let $\mu_n = d^{-mn}f^{n*}\delta_v$ for each $n\geq 0$. It suffices to show that every weak limit $\mu$ of a subsequence $\mu_{n_i}$ is the Dirac mass at the Gauss point. Suppose such a $\mu$ were not the Dirac mass at the Gauss point. Then there is some irreducible proper closed set $E\subsetneq X$ such that $\mu(\bk{red}^{-1}(E))>0$. Let $\psi\colon X^\an\to [0,1]$ be the continuous function \[
\psi(w) := \begin{cases} \min\{1, w(\mf{m}_E)\} & \bk{red}(w)\in E.\\ 0 & \bk{red}(w)\notin E.\end{cases}\] This function is strictly positive on $\bk{red}^{-1}(E)$ and $0$ elsewhere, so $\int \psi\,d\mu >0$. On the other hand, if $\varphi\in \mf{m}_E$, then $\psi(w)\leq |w(\varphi)|$. Thus \[
0<\int \psi\,d\mu = \lim_{i\to \infty} \int \psi\,d\mu_{n_i} \leq \lim_{i\to \infty} d^{-mn_i} \sum_{f^{n_i}(w) = v} m_{f^n}(w)|w(\varphi)| = 0,\] a contradiction. This completes the proof.
\end{proof}

\begin{lem} Let $p\in X$ be a closed point, and let $D$ be an effective divisor on $X$ with local defining equation $\varphi$ at $p$. Then one has the inequality $\ord_p(\varphi)\leq \deg_{L^s}D$, where $s\geq 1$ is an integer large enough that $L^s$ is very ample.
\end{lem}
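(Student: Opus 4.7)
The plan is to use classical intersection theory by cutting $X$ down to a curve through $p$ via general hyperplane sections and comparing the total degree of $D$ to its local intersection multiplicity at $p$. Replacing $L$ by $L^s$, we may assume $L$ is very ample and embeds $X \hookrightarrow \pr^N_K$; write $m = \dim X$ and $n = \ord_p(\varphi)$. The case $m = 1$ is immediate, since $\deg_L D = \sum_q \ord_q(D) \geq \ord_p(D) = n$, so we assume $m \geq 2$.

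Using that $L$ is very ample and that $X$ is smooth at $p$, I would apply Bertini's theorem inductively to find sections $s_1,\ldots,s_{m-1} \in H^0(X,L)$, each vanishing at $p$, such that the common zero locus $C := Z(s_1) \cap \cdots \cap Z(s_{m-1})$ is a reduced curve with $p$ a smooth point of $C$, and no irreducible component of $C$ is contained in $\mathrm{supp}(D)$. Smoothness at $p$ is arranged at each inductive step by choosing a hyperplane whose tangent space at $p$ is transverse to the tangent space of the previous partial intersection; this is a nonempty open condition on the linear subsystem of sections of $L$ vanishing at $p$. Avoiding containment in the finitely many components of $D$ imposes additional open conditions, which are nonvacuous because very ampleness of $L$ gives enough sections.

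With such a $C$ in hand, the projection formula gives $\deg_L D = (L^{m-1} \cdot D) = (C \cdot D)$, and since $C \not\subseteq \mathrm{supp}(D)$, this intersection number is a sum of local multiplicities $\sum_q (C \cdot D)_q$, all nonnegative. Since $\mc{O}_{C,p}$ is a DVR, the local contribution at $p$ equals $(C \cdot D)_p = \mathrm{length}_{\mc{O}_{C,p}}(\mc{O}_{C,p}/\varphi) = \ord_p(\varphi|_C)$. But $\varphi \in \mf{m}_{X,p}^n$ implies $\varphi|_C \in \mf{m}_{C,p}^n$, so $\ord_p(\varphi|_C) \geq n$. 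Combining yields $\deg_L D \geq (C \cdot D)_p \geq n = \ord_p(\varphi)$, as required.

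The main obstacle is the general position step: one must simultaneously guarantee smoothness of $C$ at the prescribed point $p$ and non-containment in the components of $D$, despite the restriction to the linear subsystem $H^0(X, L \otimes \mf{m}_p)$. Both are open conditions, but verifying their joint nonvacuousness requires the very ampleness of $L$ (so that global sections separate points and tangent directions) together with $\dim X \geq 2$ (so that the subsystem through $p$ still has enough freedom after cutting out by one linear condition).
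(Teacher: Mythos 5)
Your argument is correct, but it takes a genuinely different route from the paper's. You cut $X$ down to a curve $C$ through $p$ via general hyperplane sections and compare the order of $\varphi|_C$ in the discrete valuation ring $\mc{O}_{C,p}$ against the degree of $C\cdot D$; the key estimate is $\ord_{\mc{O}_{C,p}}(\varphi|_C)\geq\ord_p(\varphi)$, which gives only a one-sided bound at $p$. The paper instead works purely locally in $\hat{\mc{O}}_{X,p}$: it chooses sections $s_1,\ldots,s_m$ whose germs form a regular system of parameters, applies Weierstrass preparation to write $\varphi=uQ$ with $Q$ a Weierstrass polynomial of degree $n=\ord_p(\varphi)$, and then computes $\dim_K\mc{O}_{X,p}/(\varphi,t_1,\ldots,t_{m-1})$ to be \emph{exactly} $n$, identifying this with the local intersection number before bounding it by the global degree. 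The trade-off is this: your curve-cutting approach makes the passage from local to global transparent (once $C$ is a reduced curve not contained in $\mathrm{supp}(D)$, the local multiplicity at $p$ is obviously a summand of the nonnegative decomposition of $\deg_L D$), but it leans on a Bertini argument that needs genuine care in positive characteristic --- one must ensure, at each inductive step, that the cut-down variety remains reduced and of the right dimension, not merely smooth at $p$, and this requires checking that the relevant linear subsystems are separable. The paper's Weierstrass-preparation route avoids Bertini entirely and works for any choice of sections generating $T_p^*X$, but it implicitly relies on identifying $\dim_K\mc{O}_{X,p}/(\varphi,t_1,\ldots,t_{m-1})$ with the local intersection multiplicity (Serre's formula collapses to the naive length here since $X$ is regular) and on the nonnegativity of contributions elsewhere. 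You honestly flag the Bertini step as the main technical obstacle, and that assessment is accurate; filling in the positive-characteristic details would complete the proof.
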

\begin{proof} The lemma is trivial if $p\notin\mathrm{Supp}(D)$, so assume $p\in \mathrm{Supp}(D)$. Since $L^s$ is very ample, there exist global sections $s_1,\ldots, s_m$ of $L^s$ vanishing at $p$ such that the $t_i := s_{i,p}\in \mf{m}_p\subset\mc{O}_{X,p}$ generate the tangent space at $p$. Replacing the $s_i$ by some $K$-linear combination of the $s_i$ if necessary, the Weierstrass preparation theorem gives that $\varphi$ can be decomposed in $\hat{\mc{O}}_{X,p}$ as $\varphi = uQ$, where $u$ is a unit and \[
Q(t) = t_m^n + g_1(t_1,\ldots, t_{m-1})t_m^{n-1} + \cdots + g_n(t_1,\ldots, t_{m-1})\] is a Weierstrass polynomial of degree $n = \ord_p(\varphi)$. It follows that \[
\dim_K \mc{O}_{X,p}/(\varphi, t_1,\ldots, t_{m-1}) = \dim_K K[t_m]/(t_m^n) = n = \ord_p(\varphi).\] On the other hand, $\dim_K \mc{O}_{X,p}/(\varphi, t_1,\ldots, t_{m-1})$ is exactly the local intersection multiplicity $D\cdot \mathrm{Div}(s_1)\cdot\cdots\cdot \mathrm{Div}(s_{m-1})$ at $p$. This is, of course, bounded above by the global intersection number $D\cdot \mathrm{Div}(s_1)\cdot\cdots\cdot \mathrm{Div}(s_{m-1}) = \deg_{L^s}D$.
\end{proof}

\begin{thm} Let $X$ be a smooth irreducible projective variety, and $f\colon (X,L)\to (X,L)$ a separable polarized morphism of degree $d\geq 2$. Let $v\in X^\an$ be a tame valuation. Then the preimages of $v$ equidistribute to the Dirac mass at the Gauss point of $X^\an$.
\end{thm}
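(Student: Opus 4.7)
The strategy is to verify the hypothesis of \hyperref[prop:eq_char]{Proposition~\ref*{prop:eq_char}}: for each $\varphi\in K(X)^\times$ I must show that $d^{-mn}\sum_{f^n(w)=v}m_{f^n}(w)|w(\varphi)|\to 0$. Since $L$ is ample, choose $K$ large enough that $L^K$ is very ample, and write $\varphi=s_1/s_2$ for two global sections $s_1,s_2\in H^0(X,L^K)$ with effective divisors $D_i=(s_i)$. For every valuation $w\in X^\an$ centered at $p_w=\bk{red}(w)$, the nonnegative quantities $w(s_i)$ are well-defined via any local trivialization of $L^K$, and they satisfy $w(\varphi)=w(s_1)-w(s_2)$, hence $|w(\varphi)|\leq w(s_1)+w(s_2)$. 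Thus it suffices to show $d^{-mn}\sum m_{f^n}(w)w(s_i)\to 0$ for $i=1,2$.

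Let $\xi=\bk{red}(v)$. For each $n\geq 1$, the scheme-theoretic preimage $f^{-n}(\xi)$ is finite, so by very ampleness we can select $t_n\in H^0(X,L^K)$ whose divisor $D_{t_n}$ avoids the finite set $\{\xi\}\cup f^{-n}(\xi)$. With this choice, $w(t_n)=0$ for every $f^n$-preimage $w$ of $v$, so $w(s_i/t_n)=w(s_i)$ at all such $w$, and \hyperref[cor:bourbaki]{Corollary~\ref*{cor:bourbaki}} applied to $s_i/t_n\in K(X)^\times$ yields
\[
\sum_{f^n(w)=v} m_{f^n}(w)\,w(s_i)=v(N_{f^n}(s_i/t_n)).
\]
Because the divisor of $N_{f^n}(s_i/t_n)$ is $f^n_*D_i-f^n_*D_{t_n}$ and the support of $f^n_*D_{t_n}$ avoids $\xi$, the rational function $N_{f^n}(s_i/t_n)$ is regular at $\xi$, with $\ord_\xi$ equal to the multiplicity of $f^n_*D_i$ at $\xi$.

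Let $C$ be the tame constant of $v$, so $v(\psi)\leq C\ord_\xi(\psi)$ for $\psi\in\mc{O}_{X,\xi}$. The preceding lemma bounds the local order at a closed point, so specialize from $\xi$ to a closed point $p$ in the smooth locus of $V=\overline{\{\xi\}}$ and outside $\mathrm{Supp}(f^n_*D_{t_n})$; such $p$ exist since these conditions define a nonempty open subset of $V$. Choosing a regular system of parameters at $p$ whose first $\mathrm{codim}(V)$ entries cut out $V$, a direct expansion in $\hat{\mc{O}}_{X,p}$ gives $\ord_\xi(\psi)\leq\ord_p(\psi)$ for every $\psi\in\mc{O}_{X,p}$. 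Combining this inequality with the preceding lemma (with $L^s$ very ample for $s$ large) and the projection-formula computation $\deg_L(f^n_*D)=d^{n(m-1)}\deg_LD$, I obtain
\[
v(N_{f^n}(s_i/t_n))\leq C\ord_p(N_{f^n}(s_i/t_n))\leq C\deg_{L^s}(f^n_*D_i)=Cd^{n(m-1)}\deg_{L^s}D_i.
\]
Summing over $i=1,2$ and dividing by $d^{mn}$ yields a bound of the form $O(d^{-n})\to 0$, as required. The principal obstacle is the final comparison of the tameness bound at the possibly non-closed center $\xi$ with the lemma's bound at a closed point, which is handled by specializing into the smooth locus of $V$ and using that $X$ is smooth so the relevant local rings are regular.
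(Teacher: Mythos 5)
Your proposal is correct and follows essentially the same route as the paper's proof: both reduce to the criterion of Proposition~\ref{prop:eq_char}, both invoke Corollary~\ref{cor:bourbaki} to convert the sum over $f^n$-preimages of $v$ into a single value $v(N_{f^n}(\cdot))$, both then use the tameness bound $v \leq C\,\mathrm{ord}_\xi$, specialize from $\xi$ to a suitable closed point $p$, apply the degree lemma, and finish with the projection formula $\deg_{L^s}(f^n_*D_i) = d^{n(m-1)}\deg_{L^s}D_i$. The only differences are cosmetic: you realize the per-$n$ representatives as $s_i/t_n$ for an auxiliary section $t_n$ whose divisor misses $\{\xi\}\cup f^{-n}(\xi)$, whereas the paper simply posits rational functions $\psi_1,\psi_2$ regular at $f^{-n}(\xi)$ with $\varphi=\psi_1/\psi_2$; and you spell out a parametrization argument for the specialization inequality $\mathrm{ord}_\xi \leq \mathrm{ord}_p$ (by restricting $p$ to the smooth locus of $\overline{\{\xi\}}$), which the paper uses without comment.
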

\begin{proof}  Let $\varphi\in K(X)^\times$ be a nonconstant function, and let $D_1$ and $D_2$ be  effective divisors  on $X$ with $\mathrm{Div}(\varphi) = D_1 - D_2$. Let $\xi = \bk{red}(v)$. Fix an $n\geq 1$, and let $\psi_1,\psi_2\in K(X)^\times$ be rational functions that are regular at every $f^n$-preimage of $\xi$ such that $\varphi = \psi_1/\psi_2$. Then if  $w\in f^{-n}(v)$ is any $f^n$-preimage of $v$, \[
|w(\varphi)| = |w(\psi_1) - w(\psi_2)|\leq w(\psi_1) + w(\psi_2).\] It follows from \hyperref[cor:bourbaki]{Corollary~\ref*{cor:bourbaki}} that \[
\sum_{f^n(w) = v} m_{f^n}(w)|w(\varphi)| \leq v(N_{f^n}(\psi_1)) + v(N_{f^n}(\psi_2)).\] By construction, $\psi_i$ is gives a local defining equation of $D_i$ at each $\zeta\in f^{-n}(\xi)$ for $i = 1,2$. Since $\mathrm{Div}(N_{f^n}(\psi_i)) = f^n_*\mathrm{Div}(\psi_i)$, it follows that $N_{f^n}(\psi_i)$ is regular at $\xi$ and that it is a local defining equation for $f^n_*D_i$ at $\xi$ for $i = 1,2$. 

By assumption $v$ is tame, so there is a constant $C>0$ such that $v\leq C\ord_\xi$ on $\mc{O}_{X,\xi}$. We then get the inequality \[
\sum_{f^n(w) = v} m_{f^n}(w)|w(\varphi)| \leq C\ord_\xi(N_{f^n}(\psi_1)) + C\ord_\xi(N_{f^n}(\psi_2)).\] If $p\in X$ is a closed point specializing $\xi$ at which both $N_{f^n}(\psi_1)$ and $N_{f^n}(\psi_2)$ are regular, then $\ord_\xi(N_{f^n}(\psi_i))\leq \ord_p(N_{f^n}(\psi_i)$ for $i = 1,2$, giving \[
\sum_{f^n(w) = v} m_{f^n}(w)|w(\varphi)| \leq C\ord_p(N_{f^n}(\psi_1)) + C\ord_p(N_{f^n}(\psi_2)).\] Since $N_{f^n}(\psi_i)$ is the local defining equation of $f^n_*D_i$ at $p$, we have the estimate $\ord_p(N_{f^n}(\psi_i)) \leq \deg_{L^s}f^n_*D_i$, where $s\geq 1$ is an integer large enough that $L^s$ is very ample. Thus \begin{align*}
\sum_{f^n(w) = v}m_{f^n}(w) |w(\varphi)| &\leq C\deg_{L^s}f^n_*D_1 + C\deg_{L^s}f^n_*D_2 = C\deg_{f^{n*}L^s}D_1 + C\deg_{f^{n*}L^s}D_2\\
& = Cd^{n(m-1)}(\deg_{L^s}D_1 + \deg_{L^s}D_2).
\end{align*} This proves the estimate $d^{-mn}\sum_{f^n(w) = v} m_{f^n}(w)|w(\varphi)| = O(d^{-n})$. Using \hyperref[prop:eq_char]{Proposition~\ref*{prop:eq_char}}, the preimages of $v$ equidistribute to the Dirac mass at the Gauss point of $X^\an$.
\end{proof}

\bibliographystyle{alpha}
\bibliography{References}

\end{document}